\documentclass[a4paper,12pt,reqno]{amsart}
\addtolength{\textwidth}{3.2 truecm}
\addtolength{\textheight}{1 truecm}
\setlength{\headheight}{14 pt}
\setlength{\voffset}{-1.3 truecm}
\setlength{\hoffset}{-1.5 truecm}
\setlength{\parskip}{0.2 cm}
\setlength{\marginparwidth}{2cm}
\setlength{\parindent}{0.7 cm}
\setlength{\headsep}{0.7 cm}
\setlength{\footskip}{0.9 cm}
\setlength{\marginparwidth}{2cm}
\setlength{\parindent}{0.7 cm}
\usepackage{amsmath, amsfonts, amssymb}
\usepackage{enumerate}
\usepackage[hang,flushmargin]{footmisc} 
\usepackage{amsthm}
\usepackage{mathtools}
\usepackage{calc}
\usepackage{bm}
\usepackage{comment}
\usepackage{hyperref}
\makeatletter
   \providecommand\@dotsep{5}
 \makeatother

\newtheorem{theorem}{Theorem}[section]
\newtheorem{lemma}[theorem]{Lemma}

\newtheorem{proposition}[theorem]{Proposition}

\theoremstyle{definition}
\newtheorem{defn}[theorem]{Definition}
\newtheorem{remark}[theorem]{Remark}
\newtheorem{example}[theorem]{Example}
\def\E{\mathbb{E}}
\def\Z{\mathbb{Z}}
\def\R{\mathbb{R}}
\def\T{\mathbb{T}}
\def\C{\mathbb{C}}
\def\N{\mathbb{N}}

\def\Q{\mathbb{Q}}

\def\cR{\mathcal{R}}

\newcommand{\ud}{\,\mathrm{d}}
\newcommand{\id}{\mathrm{id}}

\DeclareMathOperator{\Span}{Span}

\DeclareMathOperator{\supp}{Supp}

\DeclareMathOperator{\cB}{\mathcal{B}}

\DeclareMathOperator{\cE}{\mathcal{E}}
\DeclareMathOperator{\cG}{\mathcal{G}}
\DeclareMathOperator{\cP}{\mathcal{P}}

\DeclareMathOperator{\cV}{\mathcal{V}}

\DeclareMathOperator{\hd}{\tau}
\DeclareMathOperator{\cT}{\vartheta}


\let\originalleft\left
\let\originalright\right
\renewcommand{\left}{\mathopen{}\mathclose\bgroup\originalleft}
\renewcommand{\right}{\aftergroup\egroup\originalright}

\providecommand{\norm}[1]{\left\lVert #1 \right\rVert}

\renewcommand{\subset}{\subseteq}
\renewcommand{\supset}{\supseteq}

\begin{document}
\title[On linear configurations and invariant hypergraphs]{On linear configurations in subsets of compact abelian groups, and invariant measurable hypergraphs}
\date{}

\author{Pablo Candela}
\address{Alfr\'ed R\'enyi Mathematical Research Institute\newline
	\indent Budapest, Hungary}
\email{candela83@gmail.com}

\author{Bal\'azs Szegedy}
\address{Alfr\'ed R\'enyi Mathematical Research Institute\newline
\indent Budapest, Hungary}
\email{szegedyb@gmail.com}

\author{Llu\'is Vena}
\address{Department of Mathematics\newline
\indent University of Toronto, Toronto, Canada}
\email{lluis.vena@utoronto.ca}

\subjclass{Primary 11B30, 22C05, 05C65; Secondary  22F10, 11C20}
\keywords{Linear configurations, hypergraphs, removal results, compact abelian groups}

\begin{abstract}
We prove an arithmetic removal result for all compact abelian groups, generalizing a finitary removal result of Kr\'al', Serra and the third  author. To this end, we consider  infinite measurable hypergraphs that are invariant under certain group actions, and for these hypergraphs we prove a symmetry-preserving removal lemma, which extends a finitary  result of the same name by the second author. We deduce our arithmetic removal result by applying this lemma to a specific type of invariant measurable hypergraph. As a direct application, we obtain the following generalization of Szemer\'edi's theorem: for any compact abelian group $G$, any measurable set $A\subset G$ with Haar probability $\mu(A)\geq\alpha>0$ satisfies
\[
\int_G\int_G\; 1_A\big(x\big)\; 1_A\big(x+r\big) \cdots 1_A\big(x+(k-1)r\big) \;\ud\mu(x)\ud\mu(r) \geq c,
\]
where the constant $c=c(\alpha,k)>0$ is valid uniformly for all $G$.  This result is shown to hold more generally for any translation-invariant system of $r$ linear equations given by an integer matrix with coprime $r\times r$ minors.
\end{abstract}

\maketitle

\section{Introduction}
This paper concerns the general question of the extent to which linear configurations of a given type must occur in subsets of abelian groups. Given a matrix $M\in \Z^{r\times m}$, and a subset $A$ of an abelian group $G$, we consider the set of elements $x\in A^m$ solving the system $Mx=0$, that is the set $A^m\cap \ker_G M$. In relation to the above question, it is a well-known fruitful approach to examine what can be deduced about $A$ if the set $A^m\cap \ker_G M$ occupies a small proportion of the total set of configurations $\ker_G M$. In this direction, useful information is provided by what are often called arithmetic removal results. The following example treats the case of simple abelian groups $G=\Z_p$.

\begin{theorem}\label{thm:KSVremlem}
Let $m,r$ be positive integers, with $m\geq r$. Then for any $\epsilon>0$ there exists $\delta>0$ such that the following holds. Let $M$ be a matrix of rank $r$ in $\Z^{r\times m}$ and suppose that $A_1,A_2,\ldots, A_m$ are subsets of $\Z_p$ such that $|A_1\times A_2 \times \cdots \times A_m \cap \ker_{\Z_p} M | \leq \delta |\ker_{\Z_p} M|$. Then there exist $R_1\subset A_1,\ldots, R_m\subset A_m$ such that $|R_j|\leq \epsilon p$ for every $j\in [m]$, and $\big(\prod_{j\in [m]} A_j\setminus R_j\big) \cap \ker_{\Z_p} M =\emptyset$.
\end{theorem}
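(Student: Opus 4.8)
The plan is to reduce the statement to the hypergraph removal lemma, following the combinatorial proof of the arithmetic removal lemma due to Kr\'al', Serra and the third author (for one equation in three variables this is the classical reduction of Ruzsa and Szemer\'edi to the graph removal lemma). Throughout $p$ is prime, so $\Z_p$ is a field. Row operations over $\Z_p$ and deletion of zero rows do not change $\ker_{\Z_p}M$, so we may assume $M$ has $r'=\mathrm{rank}_{\Z_p}(M)\le r$ rows and is of full rank over $\Z_p$; since $r'\le m$, taking $\delta$ to be the least of the finitely many thresholds obtained below for $r'\in\{0,1,\dots,m\}$ keeps it uniform in $M$ and $p$. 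Put $d=m-r'$, so that $\ker_{\Z_p}M$ is a $d$-dimensional subspace of $\Z_p^m$ of size $p^{d}$. If $p$ is below a threshold $p_0=p_0(\epsilon,m)$ to be fixed later, then $|\ker_{\Z_p}M|\le p^m\le p_0^{m}$, so $\delta<p_0^{-m}$ forces $\prod_jA_j\cap\ker_{\Z_p}M=\emptyset$ and we may take all $R_j=\emptyset$; hence assume $p>p_0$. After permuting the columns of $M$ (equivalently, relabelling the sets $A_j$, which leaves the statement unchanged) we may write
\[
\ker_{\Z_p}M=\bigl\{\bigl(t_1,\dots,t_d,L_1(t),\dots,L_{r'}(t)\bigr):t\in\Z_p^{d}\bigr\}
\]
for fixed $\Z_p$-linear forms $L_1,\dots,L_{r'}$ in $t=(t_1,\dots,t_d)$, so that a configuration in $\prod_jA_j\cap\ker_{\Z_p}M$ is exactly a point $t\in\Z_p^{d}$ with $t_j\in A_j$ for $j\le d$ and $L_i(t)\in A_{d+i}$ for $i\le r'$; by hypothesis there are at most $\delta p^{d}$ such $t$.

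Next, encode these ``good'' points $t$ as the simplices of a finite hypergraph, by the standard multipartite construction used in the proofs of the arithmetic removal lemma. One obtains a $k$-uniform hypergraph $H$ on $n=\Theta(p)$ vertices, split into a bounded number of classes, such that the copies of the complete simplex $K_{k+1}^{(k)}$ in $H$ are exactly the ``lifts'' of the good points $t$, in a $\Theta(p^{e})$-to-one manner for a fixed positive integer $e$; consequently $H$ has at most $C(m)\,\delta\,n^{k+1}$ simplices. Two features of the construction are used below. First, each edge of $H$ ``carries'', for a unique index $j\in[m]$, an element $a\in\Z_p$ playing the role of the coordinate $t_j$ (if $j\le d$) or of the value $L_{j-d}(t)$ (if $j>d$), the edge lying in $H$ precisely when $a\in A_j$. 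Second, the conditions defining a good $t$ are invariant under translating $t$ by a fixed vector, so that the ``fibre'' $\Phi_j(a)$ of edges carrying a given element $a$ in role $j$ has size $\phi=\Theta(p^{k-1})$ independent of $j$ and $a$, and the fibres partition the edge set of $H$; moreover this translation symmetry relates the fibre structure to the structure of the lifts.

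Now apply the hypergraph removal lemma (Gowers; Nagle--R\"odl--Schacht--Skokan; Tao): there are $\delta'=\delta'(\epsilon,m)$, with $\delta'\to0$ as the simplex density tends to $0$, and a set $D$ of at most $\delta'n^{k}$ edges of $H$ whose deletion leaves no simplices. For $j\in[m]$ put $R_j=\{a\in A_j:|\Phi_j(a)\cap D|>\phi/(2m)\}$. Since the fibres partition the edge set, $\sum_{j\in[m]}|R_j|\cdot\phi/(2m)\le|D|\le\delta'n^{k}$, and as $\phi=\Theta(p^{k-1})$, $n=\Theta(p)$, this gives $\sum_{j\in[m]}|R_j|\le\epsilon p$ — in particular $|R_j|\le\epsilon p$ for every $j$ — once $\delta$, hence $\delta'$, is small enough. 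Finally suppose some $x=(t_1,\dots,t_d,L_1(t),\dots,L_{r'}(t))$ lay in $\prod_{j\in[m]}(A_j\setminus R_j)\cap\ker_{\Z_p}M$. Then all $\Theta(p^{e})$ lifts of $x$ are simplices of $H$, so each contains an edge of $D$; but every edge occurring in a lift of $x$ lies in a fibre $\Phi_j(x_j)$ with $x_j\notin R_j$, hence a fibre meeting $D$ in at most a $\tfrac{1}{2m}$-fraction, and since the translation symmetry distributes these fibre edges evenly among the lifts of $x$, a short count bounds the number of lifts of $x$ that meet $D$ by a fixed fraction strictly below $1$ of the total — contradicting that all of them meet $D$. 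This proves the theorem.

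The main obstacle is the interface between the combinatorial reformulation and the (deep) hypergraph removal lemma. One must design the hypergraph so that solutions correspond \emph{precisely} to simplices (with no spurious extra ones) and so that the fibre sizes and the fibre-to-lift incidences are uniform; this uniformity is exactly what lets the arbitrary small edge-deletion produced by the removal lemma be averaged into one that is constant on fibres, and hence be pushed down to small deletions $R_j\subseteq A_j$. Setting this up so that the counting is clean for a genuine system of equations ($r\ge2$), rather than a single equation, is the most delicate part.
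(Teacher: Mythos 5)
Your overall strategy — parametrize $\ker_{\Z_p} M$, encode solutions as simplices of a multipartite $k$-uniform hypergraph, apply the (finite) hypergraph removal lemma, and then average the resulting small edge deletion into a fibre-constant one so that it descends to small sets $R_j\subset A_j$ — is the correct skeleton, and it is essentially the route taken by Shapira and by Kr\'al'--Serra--Vena, as well as (in abstract form) by this paper's Lemma~\ref{lem:simrem} and Proposition~\ref{prop:keylink}. Your averaging step (declaring $a\in R_j$ when more than a $\tfrac1{2m}$ fraction of the fibre $\Phi_j(a)$ is deleted, then invoking translation symmetry to finish) is the same device as the symmetry-preserving removal argument, and the reduction to the full-rank case and the uniformity of $\delta$ in $p$ and $M$ via a finite minimum are handled correctly.

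However, there is a genuine gap, and it is located exactly where you flag it: the hypergraph is never actually built, and the three properties you lean on are asserted rather than established. Concretely, (a) you need a construction in which the copies of the template simplex are \emph{precisely} the lifts of solutions, with no spurious copies; (b) you need each edge to carry exactly one coordinate index $j$ and one value $a$, so that the fibres $\Phi_j(a)$ partition $E(H)$; and (c) — the load-bearing claim in the final count — you need that, for a fixed solution $x$, each edge of $\Phi_j(x_j)$ is contained in the \emph{same} number of lifts of $x$, so that ``fibre meets $D$ in at most a $\tfrac1{2m}$ fraction'' genuinely implies ``at most a $\tfrac1{2m}$ fraction of lifts of $x$ hit that fibre.'' None of these is automatic; they are the whole point of the construction and are where the difficulty of passing from one equation to a system ($r\ge 2$) lives. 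For a single equation the Ruzsa--Szemer\'edi construction does deliver (a)--(c), but for general $M$ one needs something like the circular/representation machinery of Section~\ref{section:FindRep} of this paper, or the ad hoc but verified constructions of \cite{KSV2,Shap}. As written, your proof replaces this with the phrases ``the standard multipartite construction'' and ``a short count,'' so the argument is not complete: what you have is a correct reduction scheme together with an accurate diagnosis of the missing ingredient, not a proof. To close the gap you would need to write down the hypergraph explicitly (for instance via a matrix $\Psi$ with $M\Psi=0$ whose rows have prescribed supports, as in Definition~\ref{defn:Cayley-rep} and Lemma~\ref{lem:circrep}), verify (a)--(c) for it, and then carry out the counting you sketch.
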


As a consequence, if $|A^m\cap \ker_{\Z_p} M| \leq \delta | \ker_{\Z_p} M|$, then it is possible to eliminate all these solutions in $A^m$ by removing at most $\epsilon p$ elements from $A$. Thus $A$ must be of the form $B\cup R$, where $|R|\leq \epsilon p$ and $B$ is what we call an \emph{$M$-free set}, that is  it satisfies $B^m\cap \ker_G M=\emptyset$.

Theorem \ref{thm:KSVremlem} was proved by Shapira \cite{Shap} and independently by Kr\'al', Serra and the third author \cite{KSV2}. (Strictly speaking, the result was proved more generally for finite fields.) This result confirmed a conjecture of Green from \cite{GAR}. In that paper, Green introduced the notion of such removal results as arithmetic counterparts of well-known combinatorial removal results from graph theory, and he proved a version of Theorem \ref{thm:KSVremlem} for a single linear equation on an arbitrary finite abelian group. For more background on the relation between arithmetic and combinatorial removal results, the reader is referred to the survey \cite{C&F}, especially Section 4 therein.

One of the central consequences of Theorem \ref{thm:KSVremlem} is a  general form of Szemer\'edi's famous theorem on arithmetic progressions \cite{Sze75}, Theorem \ref{thm:StrongZpSzem} below. To state the result, we use the following terminology. We say that a matrix $M\in \Z^{r\times m}$ is \emph{invariant} if its columns sum to zero, that is if $M (1,1,\ldots,1)^T=0$; equivalently, for any abelian group $G$, the set $\ker_G M$ is invariant under translations by constant elements $(t,t,\ldots, t)$, $t\in G$. Examples of configurations given by invariant matrices include arithmetic progressions of an arbitrary fixed length.

\begin{theorem}\label{thm:StrongZpSzem}
Let $m,r$ be positive integers, with $m\geq r$. For any $\alpha>0$ there exists $c=c(\alpha,m)>0$ such that the following holds. Let $M$ be an invariant matrix of rank $r$ in $\Z^{r\times m}$, and let $A$ be a subset of $\Z_p$ of cardinality at least $\alpha p$. Then we have \[
|A^m \cap \ker_{\Z_p} M|\,/\, |\ker_{\Z_p} M|\geq c.
\]
\end{theorem}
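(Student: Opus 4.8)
The plan is to deduce Theorem~\ref{thm:StrongZpSzem} from the removal result Theorem~\ref{thm:KSVremlem} by the standard ``removal implies supersaturation'' argument. The structural fact that makes this work is that, for an invariant matrix $M$, a \emph{nonempty} subset $B$ of $\Z_p$ can never be $M$-free: if $b\in B$ then invariance gives $M(b,b,\dots,b)^{T}=b\,M(1,1,\dots,1)^{T}=0$, so the diagonal point $(b,\dots,b)$ lies in $B^m\cap\ker_{\Z_p}M$. Hence Theorem~\ref{thm:KSVremlem}, which lets us destroy all solutions in $A^m$ by deleting few points from $A$, forces $A^m\cap\ker_{\Z_p}M$ to be not too sparse as soon as $|A|$ is a fixed proportion of $p$.

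Concretely, I would fix $\alpha>0$ and $m$, put $\epsilon:=\alpha/(2m)$, and for each $r\in[m]$ invoke Theorem~\ref{thm:KSVremlem} with this $\epsilon$ to get $\delta(r)>0$; as $[m]$ is finite, $\delta_0:=\min_{r\in[m]}\delta(r)$ is positive. I would also pick $p_0=p_0(\alpha)$ with $\alpha p/2\ge1$ for all $p\ge p_0$, and set $c=c(\alpha,m):=\min\{\delta_0,\ p_0^{-(m-1)},\ \alpha^m\}$. For the finitely many primes $p<p_0$ the claimed bound is immediate from the diagonal observation: $|A|\ge\alpha p\ge1$, so $A\ne\emptyset$ and $|A^m\cap\ker_{\Z_p}M|\ge1$, while $|\ker_{\Z_p}M|\le p^{m-1}<p_0^{m-1}$ unless $p$ divides every entry of $M$, in which case $\ker_{\Z_p}M=\Z_p^m$ and $|A^m\cap\ker_{\Z_p}M|=|A|^m\ge\alpha^m|\ker_{\Z_p}M|$; either way the ratio is at least $c$.

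For $p\ge p_0$ I would argue by contradiction. Let $r$ be the rank of $M$ and suppose $|A^m\cap\ker_{\Z_p}M|\le\delta(r)\,|\ker_{\Z_p}M|$. Applying Theorem~\ref{thm:KSVremlem} with $A_1=\dots=A_m=A$ yields sets $R_j\subset A$ with $|R_j|\le\epsilon p$ and $\big(\prod_{j\in[m]}(A\setminus R_j)\big)\cap\ker_{\Z_p}M=\emptyset$. Setting $B:=A\setminus\bigcup_{j\in[m]}R_j$ we have $B^m\cap\ker_{\Z_p}M=\emptyset$, yet $|B|\ge|A|-\sum_{j\in[m]}|R_j|\ge\alpha p-m\epsilon p=\tfrac{\alpha p}{2}\ge1$, so $B$ is a nonempty $M$-free subset of $\Z_p$, contradicting the structural fact above. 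Therefore $|A^m\cap\ker_{\Z_p}M|>\delta(r)\,|\ker_{\Z_p}M|\ge c\,|\ker_{\Z_p}M|$, which is the desired inequality.

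I do not expect a genuine obstacle: all the difficulty is already packaged in Theorem~\ref{thm:KSVremlem}, and what remains is a routine deduction. The only points needing any care are choosing $\epsilon$ strictly smaller than $\alpha/m$ (the harmless factor of $2$) so that the un-removed set $B$ stays nonempty, and treating the small primes $p<p_0$ on their own --- unavoidable since the conclusion must hold uniformly in $p$ whereas the removal lemma gives nothing for bounded $p$ --- which the trivial diagonal count dispatches at once.
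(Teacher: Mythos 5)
Your argument is correct, and it is essentially the paper's own deduction: apply Theorem~\ref{thm:KSVremlem} with $\epsilon=\alpha/(2m)$, observe that after removal the set $B=A\setminus\bigcup_j R_j$ is still nonempty, and conclude via the diagonal solution $(b,\dots,b)$ that $B$ cannot be $M$-free when $M$ is invariant. (The paper records exactly this contradiction argument for the compact-group analogue, Theorem~\ref{thm:cag-Szem}, at the end of Section~\ref{section:CayleyHypDefn}.)

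One small point worth flagging: your separate treatment of the primes $p<p_0$ rests on the claim that ``the removal lemma gives nothing for bounded $p$,'' and that claim is not right. Theorem~\ref{thm:KSVremlem} is stated with $\delta$ independent of $p$, so it applies to every prime; for tiny $p$ with $\epsilon p<1$ the sets $R_j$ are forced to be empty, which only makes the conclusion stronger. More to the point, the case split is not needed at all: the hypothesis $|A|\geq\alpha p>0$ already forces $A\ne\emptyset$, and after removal $|B|\geq\alpha p-m\epsilon p=\alpha p/2>0$, so $B\ne\emptyset$ for every $p$, and the diagonal contradiction goes through uniformly. Taking the minimum of the $\delta(r)$ over $r\in[m]$ is a legitimate precaution to make $c$ depend only on $(\alpha,m)$, as the statement requires.
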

In particular, for any positive integer $k$, the set $A$ must contain a positive proportion $c(\alpha,k)$ of the total number $p^2$ of $k$-term progressions in $\Z_p$. The deduction of Theorem \ref{thm:StrongZpSzem} from Theorem \ref{thm:KSVremlem} is very short, we record a proof in a more general context at the end of Section \ref{section:CayleyHypDefn}.

If $M$ is not invariant, then the conclusion of Theorem \ref{thm:StrongZpSzem} fails, in that there exists $\alpha=\alpha(M)>0$ such that in any group $\Z_p$ there is an $M$-free set of size at least $\alpha p$. This can be shown using a simple adaptation of the argument from \cite[Theorem 2.1]{Ruz}.

Thus for $G=\Z_p$, as a direct consequence of Theorem \ref{thm:KSVremlem}, the question recalled at the beginning of this introduction receives a strong answer (Theorem \ref{thm:StrongZpSzem})  which is also exhaustive as far as systems of linear equations are concerned.\footnote{The answer is strong in a qualitative sense.  The quantitative problem of obtaining optimal estimates for the function $c(\alpha,M)$ in Theorem \ref{thm:StrongZpSzem} is a vast and very interesting one, that includes improving the bounds for Szemer\'edi's theorem.  For the latter theorem the current best general bounds were given in \cite{GSz}; see also \cite{Bloom,G&T2,Sanders} for the latest improvements in the cases $k=3,4$.} It is natural to wonder whether this picture holds for more general abelian groups.

Given $M\in \Z^{r\times m}$ of rank $r$, let us denote by $d_r(M)$ the  determinantal divisor of $M$ of order $r$, that is the greatest common divisor of the non-zero determinants of $r\times r$ submatrices of $M$; 
see \cite[Chapter II, \S 13]{Newman}. We shall not consider determinantal divisors of lower order, and will therefore refer to $d_r(M)$ simply as `the determinantal' of $M$.

 Under the assumption that $d_r(M)=1$, Kr\'al', Serra and the third author generalized Theorem \ref{thm:KSVremlem} to all finite abelian groups, obtaining\footnote{Theorem 1 in \cite{KSVGR} actually assumes that $\gcd(d_r(M),|G|)=1$, a weaker assumption than $d_r(M)=1$. However, the theorem itself holds equivalently for each of these two assumptions; see Remark \ref{rem:coprime}.} \cite[Theorem 1]{KSVGR}. This extension has found several applications. In particular it immediately implies a corresponding extension of Szemer\'edi's theorem to all finite abelian groups, since a matrix characterizing arithmetic progressions of a fixed length satisfies the above assumption; other applications include those in  \cite[Section 10]{ST} and \cite{SV}. Assuming that $d_r(M)=1$ is a simple way to ensure that the set of solutions has the `expected  dimension'; more precisely, we then have $\ker_G M \cong G^{m-r}$, as can be seen using the Smith normal form of $M$ (see \cite[Theorem II.9]{Newman}). We shall say more about this assumption in Section \ref{section:Remarks} below.
 
Some recent works have made use of removal results in the setting of infinite compact abelian groups. For instance, in \cite{CS2} it was shown that Theorem \ref{thm:KSVremlem} implies an analogous result for the circle group $G=\R/\Z$, formulated in terms of Haar measure, which was found to be useful for certain additive-combinatorial questions studied in $\Z_p$ as $p\rightarrow \infty$; see also \cite{CS1}. At the end of \cite{CS2}, the possibility of a removal result for a general compact abelian group was raised.
 
 The main result of this paper is an extension of Theorem \ref{thm:KSVremlem}, for matrices of determinantal 1, to all compact abelian groups. Below we discuss further motivation for this extension, but before that let us state the result formally.
 
All topological groups in this paper are assumed to be Hausdorff. Any compact group $G$ admits a unique Haar probability measure, which we denote by $\mu_G$. A subset of $G$ is said to be  \emph{Haar measurable} (or just \emph{measurable}) if it is in the completion of the Borel $\sigma$-algebra on $G$ relative to  $\mu_G$. Given a compact abelian group $G$ and a matrix $M\in \Z^{r\times m}$, the kernel $\ker_G M$ of the continuous homomorphism $M:G^m\to G^r$ is a compact subgroup of $G^m$, with its own Haar probability $\mu_{\ker_G M}$. For a measurable set $A\subset G$, the quantity $\mu_{\ker_G M}(A^m\cap \ker_G M)$ gives the natural notion of the  proportion (or density) of solutions contained in $A^m$.  This makes the setting of compact abelian groups a very natural one in which to seek general versions of results such as Theorem \ref{thm:StrongZpSzem} (note that if $G$ is finite then $\mu_{\ker_G M}(A^m\cap \ker_G M)$ is just $|A^m \cap \ker_G M|/|\ker_G M|$). For more background on the Haar measure, we refer the reader to \cite{D&E,H&R,Rud2}.
 
We can now state our main result.

\begin{theorem}\label{thm:cag-rem-lem}
Let $M \in \Z^{r \times m}$ satisfy $d_r(M)=1$. For any $\epsilon>0$, there exists $\delta=\delta(\epsilon,M)>0$ such that the following holds. If $A_1,A_2,\ldots,A_m$ are Borel subsets of a compact Hausdorff abelian group $G$ such that
$\mu_{\ker_G M}\big(A_1\times\cdots\times A_m\cap \ker_G M \big) \leq \delta$, then there exist Borel sets $R_1\subset A_1,\ldots, R_m\subset A_m$ such that $\mu_G(R_j)\leq \epsilon$ for all $j\in [m]$ and $\big(\prod_{j\in [m]} A_j\setminus R_j\big)\cap \ker_G M = \emptyset$.
\end{theorem}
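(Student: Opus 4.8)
The plan is to follow the route outlined in the abstract: recast the problem as a statement about a measurable hypergraph built from $G$, $M$ and the sets $A_j$; prove a removal lemma for such hypergraphs that is sensitive to their symmetries; and then translate the conclusion back into arithmetic language.

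\emph{Step 1: the invariant measurable hypergraph.} To $M\in\Z^{r\times m}$ with $d_r(M)=1$ and Borel sets $A_1,\dots,A_m\subset G$, I would attach a measurable hypergraph $H$ with $m$ vertex classes, each built from $G$ and carrying a Haar measure, in the spirit of the Cayley-type (hyper)graphs used by Green \cite{GAR} and by Kr\'al', Serra and the third author \cite{KSVGR} in the finite setting; a transversal tuple of vertices should span a simplex of $H$ precisely when the element of $\ker_G M$ it determines has its $j$-th coordinate in $A_j$ for every $j\in[m]$. The construction should ensure that: (i) $H$ is invariant under a measure-preserving action on its vertex set by a compact group $\Gamma$ attached to $G$ (typically $\ker_G M$ or a quotient of it); (ii) the measure of the simplex set of $H$ equals, up to a normalisation depending only on $M$, the quantity $\mu_{\ker_G M}(A_1\times\cdots\times A_m\cap\ker_G M)$; and (iii) a $\Gamma$-invariant measurable subset of the edge set meeting every simplex corresponds to sets $R_1\subset A_1,\dots,R_m\subset A_m$ with $\big(\prod_{j\in[m]}A_j\setminus R_j\big)\cap\ker_G M=\emptyset$ and with $\mu_G(R_j)$ bounded by the measure of the removed set. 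The hypothesis $d_r(M)=1$ is used here: by the Smith normal form it gives an integral linear isomorphism $\ker_G M\cong G^{m-r}$ (cf.\ \cite{Newman}), so the solution set has full expected dimension and the measure bookkeeping in (ii) is uniform in $G$.

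\emph{Step 2: a symmetry-preserving removal lemma for invariant measurable hypergraphs.} This is the core. I would prove that if $H$ is a measurable hypergraph on a probability space, invariant under a measure-preserving action of a compact group $\Gamma$, and the measure of its simplices is at most $\delta$, then one can delete an invariant measurable edge set of measure at most $\epsilon$ so as to destroy every simplex, with $\delta$ depending only on $\epsilon$ and the combinatorial type of $H$. The natural approach is a measurable hypergraph regularity framework — developed via hypergraph limits, or via an ultraproduct together with a Loeb-measure construction in the style of Elek--Szegedy — adapted so as to extend the finitary symmetry-preserving removal lemma of the second author. The delicate point, beyond transferring regularity to the measurable category, is to perform the regularisation and the subsequent counting-and-cleaning step equivariantly: all partitions and regular pieces must be chosen $\Gamma$-invariantly, so that the edge set one finally removes is genuinely $\Gamma$-invariant. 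This equivariance is exactly what, via Step 1, converts an abstract small removal into a removal by product sets $R_1\times\cdots\times R_m$.

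\emph{Step 3: conclusion, and the main obstacle.} Feeding the hypergraph of Step 1 into the removal lemma of Step 2 and reading the output back through the dictionary (ii)--(iii) yields Theorem~\ref{thm:cag-rem-lem}; the uniformity of $\delta$ over all compact Hausdorff abelian $G$ follows because the bound in Step 2 depends only on $\epsilon$ and the type of $H$, which depends only on $M$. I expect Step 2 to be the principal difficulty: constructing a symmetry-preserving measurable regularity and removal machinery, and verifying that it behaves correctly when $\Gamma=\ker_G M$ is infinite and not profinite — for $G=\R/\Z$ one has $\ker_G M$ again a torus, so there is no reduction to finite groups and one must genuinely argue in the measurable setting. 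A further, more routine, obstacle is to make Step 1 fully rigorous for an arbitrary compact Hausdorff abelian group: measurability of $H$, the precise normalising constants linking $\mu_{\ker_G M}$ to the edge measure, and the passage between $\Gamma$-invariant edge sets and product sets $R_1\times\cdots\times R_m$.
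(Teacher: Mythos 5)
Your outline reproduces the paper's overall architecture (an invariant measurable hypergraph, a symmetry-preserving removal lemma for it, and a translation back to the arithmetic statement), but it contains two genuine gaps and misjudges where the difficulty lies. First, your Step 2 proposes to prove the symmetry-preserving removal lemma by carrying out a measurable regularization \emph{equivariantly}, choosing "all partitions and regular pieces" $\Gamma$-invariantly. That is not how the argument goes, and it would be substantially harder (there is no reason an approximating partition of the edge sets by measurable boxes can be chosen invariantly under an infinite compact action). The paper instead (a) deduces the plain measurable removal lemma (Lemma \ref{lem:measrem}) from the \emph{finite} hypergraph removal lemma by discretizing the edge sets, with no regularity theory in the infinite setting at all, and then (b) obtains invariance of the removal sets \emph{a posteriori}: one averages the indicator of each removal set $R_j$ over the compact group action, $h_j=\cT_{\cG_{C_j}}(1_{R_j})$, and takes the level set $S_j=\{h_j>1/(2m)\}$; Markov's inequality plus a telescoping identity show the $S_j$ are small, invariant, and still destroy every copy of $F$. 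This averaging trick is the content of Szegedy's finite lemma and is the key idea missing from your Step 2.

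Second, and more seriously, your Step 1 --- which you describe as essentially routine bookkeeping once $d_r(M)=1$ gives $\ker_G M\cong G^{m-r}$ --- is in fact the main technical content of the paper. To make the dictionary work one needs a homomorphism $\Psi:G_*^t\to G^m$ whose image is $\ker_G M$, whose rows have prescribed supports $C_j$, and (crucially) which satisfies $p_{C_j}(\ker\Psi)=\ker\psi_{C_j}$ (condition (iii) of Definition \ref{defn:Cayley-rep}); without this, the $\cG_{C_j}$-invariance of a removed edge set does not translate into its being the pullback of a small subset $R_j\subset A_j$. Such a representation exists directly only for circular matrices; a general $M$ with $d_r(M)=1$ must first be extended (Lemma \ref{lem:adapt11}, via Euclid's algorithm), and when some row $B_i$ has $\gcd(B_i)=s>1$ the finite-group device of multiplying variables by $s$ is unavailable, precisely because multiplication by an integer can increase Haar measure on, say, $\R/\Z$. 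The paper must instead take vertex groups built from $G_*=G_0\times G_1\times\cdots\times G_r$ with $G_i=\{x\in G:\gcd(B_i)\,x=0\}$ and splice together representations of several auxiliary simple systems (Proposition \ref{prop:genrep}). None of this is visible in your sketch, and your $m$-vertex-class, $\Gamma=\ker_G M$ picture does not accommodate it. A smaller omission: measurability of the group action requires $G$ second countable, so the general compact Hausdorff case needs a separate approximation step (done via Lie quotients in Appendix \ref{app:A}).
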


We shall deduce this result from a more precise version, which holds for second countable compact abelian groups, and which gives additional information on the location of the sets $R_j$ and on their measure; see Theorem \ref{thm:cag-rem-lem+}. Note that Theorem \ref{thm:cag-rem-lem} also implies the inhomogeneous version of itself, where instead of $\ker_G M$ we consider the set of solutions $x\in G^m$ to $Mx=b$ for some non-zero $b\in G^r$.

From Theorem \ref{thm:cag-rem-lem}, one deduces directly the following generalization of Szemer\'edi's theorem (for a proof see the end of Section \ref{section:CayleyHypDefn}).

\begin{theorem}\label{thm:cag-Szem}
Let $M \in \Z^{r \times m}$ be invariant and satisfy $d_r(M)=1$. Then for any $\alpha>0$ there exists $c=c(\alpha,M)>0$ such that if $A$ is a measurable subset of a compact abelian group $G$ with $\mu_G(A)\geq \alpha$, then $\mu_{\ker_G M} (A^m\cap \ker_G M) \geq c$. 
\end{theorem}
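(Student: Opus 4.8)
The plan is to deduce Theorem \ref{thm:cag-Szem} from Theorem \ref{thm:cag-rem-lem} by a standard contrapositive/counting argument, exactly as one deduces Theorem \ref{thm:StrongZpSzem} from Theorem \ref{thm:KSVremlem} in the finite setting, but carried out with Haar measures. Fix an invariant matrix $M\in\Z^{r\times m}$ with $d_r(M)=1$, fix $\alpha>0$, and suppose for contradiction that $\mu_{\ker_G M}(A^m\cap\ker_G M)$ is very small, say $\leq\delta$, for some measurable $A\subset G$ with $\mu_G(A)\geq\alpha$, where $\delta=\delta(\epsilon,M)$ is the constant supplied by Theorem \ref{thm:cag-rem-lem} with $\epsilon:=\alpha/(2m)$ (or any $\epsilon<\alpha/m$). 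First I would reduce to the case that $A$ is Borel: since $A$ is Haar measurable it differs from a Borel set by a Haar-null set, and since $\ker_G M\cong G^{m-r}$ (because $d_r(M)=1$, via the Smith normal form, as recalled in the excerpt) the coordinate projections $\ker_G M\to G$ push $\mu_{\ker_G M}$ forward to $\mu_G$, so null sets in each coordinate do not affect $\mu_{\ker_G M}(A^m\cap\ker_G M)$ or $\mu_G(A)$; hence we may assume $A$ is Borel.

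Next I would apply Theorem \ref{thm:cag-rem-lem} with $A_1=\cdots=A_m=A$: it yields Borel sets $R_j\subset A$ with $\mu_G(R_j)\leq\epsilon$ and $\big(\prod_{j}(A\setminus R_j)\big)\cap\ker_G M=\emptyset$. Set $B:=A\setminus\bigcup_{j=1}^m R_j$; then $\mu_G(B)\geq\alpha-m\epsilon>0$ by the choice of $\epsilon$, and since $B\subset A\setminus R_j$ for every $j$ we get $B^m\cap\ker_G M=\emptyset$, i.e.\ $B$ is an $M$-free set of positive measure. The crux of the deduction is then to show that no $M$-free set of positive Haar measure can exist when $M$ is invariant. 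Here I would use invariance: $M(1,1,\dots,1)^T=0$ means that for every $t\in G$ the constant vector $(t,\dots,t)$ lies in $\ker_G M$, and more generally if $x=(x_1,\dots,x_m)\in\ker_G M$ then so is $x+(t,\dots,t)=(x_1+t,\dots,x_m+t)$. Consider the quantity
\[
I:=\int_{G^{m-r}}\ \int_G\ \prod_{j=1}^m 1_B\big(\pi_j(y)+t\big)\ \ud\mu_G(t)\,\ud\mu_{G^{m-r}}(y),
\]
where, via the isomorphism $\ker_G M\cong G^{m-r}$, $y\mapsto(\pi_1(y),\dots,\pi_m(y))$ is a parametrization of $\ker_G M$ and $\pi_1\equiv 0$ say (one coordinate can be normalized to $0$ using a translation, because $M$ is invariant). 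Then $I=\mu_{\ker_G M}(B^m\cap\ker_G M)=0$ by $M$-freeness, after absorbing the $t$-average into the Haar measure on $\ker_G M$. On the other hand, by Fubini and the fact that each $\pi_j$ pushes $\mu_{G^{m-r}}$ forward to $\mu_G$ (again a consequence of $d_r(M)=1$), one shows $I\geq\mu_G(B)^m>0$ (or at least $I>0$) — for instance the inner integral over $t$, for fixed $y$, equals $\mu_G\big(\bigcap_j (B-\pi_j(y))\big)$, and averaging over $y$ and using that translates of $B$ have measure $\mu_G(B)$ one obtains a positive lower bound via a second-moment or direct positivity argument. This contradiction shows $\mu_G(A)<\alpha$ is false only under the assumption $\mu_{\ker_G M}(A^m\cap\ker_G M)\leq\delta$; hence $\mu_{\ker_G M}(A^m\cap\ker_G M)>\delta=:c(\alpha,M)$, which is the claim.

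The main obstacle is the final positivity step: establishing that an invariant system forces every positive-measure set to contain solutions, uniformly, i.e.\ that $I>0$ above. In the finite case this is the elementary observation that the number of constant (or otherwise trivial) solutions inside $A^m$ is at least $|A|\geq\alpha p>\delta|\ker M|$ once $p$ is large; in the compact case one must be slightly careful because constant solutions $(t,\dots,t)$ form a set of $\mu_{\ker_G M}$-measure zero when $m>r+1$, so the crude "count the diagonal" argument does not directly give a positive measure bound. The clean fix is to exploit the translation action of the diagonal $G\hookrightarrow\ker_G M$ on $\ker_G M$: averaging $1_{B^m\cap\ker_G M}$ over this action and using Fubini shows that the density of solutions in $B^m$ equals $\int_{\ker_G M/G}\mu_G\big(\bigcap_{j}(B-\text{coords})\big)$, and since each translate has measure $\mu_G(B)\geq\alpha-m\epsilon$, an application of the Cauchy–Schwarz / van der Corput type inequality in the compact group (or simply linearity and positivity if $m$ small) forces this to be at least some $c(\alpha,m)>0$. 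This is precisely the "very short" deduction promised in the excerpt, and the role of $d_r(M)=1$ throughout is to guarantee the structural identification $\ker_G M\cong G^{m-r}$ with all coordinate projections measure-preserving, which makes all the Fubini manipulations legitimate.
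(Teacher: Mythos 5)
Your reduction to an $M$-free set $B=A\setminus\bigcup_j R_j$ of positive measure via Theorem~\ref{thm:cag-rem-lem} is exactly the paper's first move. But the step you call ``the crux'' --- showing $I>0$ by an averaging/Cauchy--Schwarz argument over the diagonal action --- is both unnecessary and unsupported, and it stems from a misreading of what the removal lemma gives. The conclusion of Theorem~\ref{thm:cag-rem-lem} is that $\big(\prod_j A_j\setminus R_j\big)\cap\ker_G M=\emptyset$, i.e.\ the intersection is \emph{literally empty}, not merely of $\mu_{\ker_G M}$-measure zero. Consequently, your worry that ``constant solutions $(t,\dots,t)$ form a set of $\mu_{\ker_G M}$-measure zero when $m>r+1$'' is a red herring: a single element of $B^m\cap\ker_G M$ is already a contradiction. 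Since $\mu_G(B)>0$, the set $B$ is non-empty; pick any $t\in B$. Invariance of $M$ gives $M(t,\dots,t)^T=tM(1,\dots,1)^T=0$, so $(t,\dots,t)\in B^m\cap\ker_G M$, contradicting $M$-freeness. That is the whole proof in the paper.

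Beyond being unnecessary, your proposed positivity argument for $I$ has a real gap. The inner integral $\mu_G\big(\bigcap_j(B-\pi_j(y))\big)$ is the measure of an intersection of $m$ translates of $B$, which can perfectly well be zero for every $y$ when $\mu_G(B)<1/m$ unless one already has some nontrivial input about the configuration given by $M$; forcing this average to be bounded below by a constant depending only on $\alpha$ and $M$ is essentially the statement of Theorem~\ref{thm:cag-Szem} itself. So the route through $I>0$ is circular as sketched. Once you drop the ``positive measure of solutions'' framing and use literal emptiness, the argument collapses to two lines and matches the paper.
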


In particular, for any positive integer $k$, any measurable set $A\subset G$ with $\mu_G(A)\geq \alpha>0$ satisfies\footnote{The case $k=3$ of this result, namely Roth's theorem for a general compact abelian group, can be treated using Fourier analysis; see for instance  \cite{Taoblog}.}
\[
\int_G \int_G \,1_A(x)\; 1_A(x+r)\;\cdots\; 1_A(x+(k-1)r)\;\ud\mu_G(x)\ud\mu_G(r) \geq c,
\]
where the positive lower bound $c=c(\alpha,k)$ is independent of the particular structure of $A$ and is in fact valid uniformly for all $G$.

In addition to the generality of Theorem \ref{thm:cag-rem-lem},  this extension to compact abelian groups offered us the motivation that it does not seem to follow from the known finite results by a simple measure-theoretic argument. Significant additive-combinatorial aspects had to be taken into account, requiring in particular further understanding of the relationship between combinatorial removal results for hypergraphs and their arithmetic counterparts. Let us complete this introduction by  detailing these points.

In order to prove a removal result in an infinite compact abelian group, it is natural to try to deduce it from a finitary version by a discretization argument. An approach of this type was taken in \cite{CS2}, yielding the above-mentioned analogue of Theorem \ref{thm:KSVremlem} for the circle group. However, as noted at the end of that paper, for more general compact abelian groups this approach yields a version of Theorem \ref{thm:cag-rem-lem} with a parameter $\delta$ depending on the topological dimension of the group. By contrast, the function $\delta$ in Theorem \ref{thm:cag-rem-lem} is independent of the compact abelian group. To obtain this, the approach in this paper consists instead in finding infinite analogues of some elements from known proofs of finite removal results, and combining those with some new elements in the infinite setting.

Most of the known proofs in the finite setting proceed by reducing the arithmetic removal result somehow to its combinatorial counterpart for uniform hypergraphs, a method which first appeared explicitly,  using graph removal lemmas, in \cite{KSV1}.

The most elaborate form of this method so far, i.e. the proof of \cite[Theorem 1]{KSVGR}, is implemented in a way that makes important use of properties specific to finite abelian groups, in particular the fact that multiplication by an integer does not increase the measure of a set in such a group (these aspects are discussed in more detail in Section 4 below). This prevents a simple transfer of the whole argument from \cite{KSVGR}  to the infinite setting, although several tools from that argument do transfer and are used in this paper.

The above-mentioned method is implemented in another way in the approach to arithmetic removal results given in \cite{SzegRem}. The main result of that paper is a so-called \emph{symmetry-preserving} version of the  removal lemma for finite hypergraphs. This version has the additional information that if the edge sets of the given hypergraph were invariant under a certain group action, then the edge sets to be removed can be guaranteed also to be invariant. This version of the hypergraph removal lemma turns out to have a useful  extension to the infinite setting, which we prove in this paper; see Lemma \ref{lem:simrem}. This extension concerns hypergraphs defined on general probability spaces and acted upon in a certain way by a compact group; see Definitions \ref{defn:t-action} and \ref{defn:InvHyp}. This infinite symmetry-preserving removal lemma gives a convenient footing for a proof of Theorem \ref{thm:cag-rem-lem+}. However, completing the  proof requires finding how to associate such an invariant hypergraph with a given system of linear equations on a compact abelian group. Indeed, in \cite{SzegRem} the finite symmetry-preserving removal lemma was shown to yield finite arithmetic removal results, but this was demonstrated only for certain examples of linear configurations, and it was not clear how to handle more general systems. In this paper, to clarify this we define a notion of a \emph{hypergraph representation} of a system of linear equations on an abelian group. This notion  extends and unifies previous finitary notions of a similar kind \cite{C,KSV2,Shap}, and it is designed to go together with the symmetry-preserving removal lemma; see Definition \ref{defn:Cayley-rep}. More precisely, this representation is a homomorphism which enables us to associate a certain measurable invariant hypergraph to the given system of equations, in such a way that the desired arithmetic removal result can be deduced from the removal lemma for this hypergraph; see Definition \ref{defn:Cay-hyp}.

In Section \ref{section:SymPresRem}, we prove the symmetry-preserving removal lemma. In Section \ref{section:CayleyHypDefn}, we define the hypergraph representation and use it to deduce the arithmetic removal result as mentioned above. In Section \ref{section:FindRep} we show that for any matrix $M\in \Z^{r\times m}$ with $d_r(M)=1$ and any compact abelian group, there exists such a hypergraph representation. 
In Section \ref{section:Remarks} we end with some remarks on potential further extensions of Theorem \ref{thm:cag-rem-lem}.

\section{A symmetry-preserving removal lemma for measurable hypergraphs}\label{section:SymPresRem}

In this section we establish the main result that we shall use concerning measurable hypergraphs, namely the symmetry-preserving removal lemma (Lemma \ref{lem:simrem}). This generalizes \cite[Theorem 2]{SzegRem}. Let us set up some terminology and notation.

Let $[ t ]=\{1,2,\ldots,t\}$, and let us denote the set of subsets of $[ t ]$ of size $k$ by $\binom{[t]}{k}$. Given any cartesian product $\prod_{i\in [t]} V_i$, and any set $e \subset [t]$, we denote by $p_e$ the projection $\prod_{i\in [t]}V_i \to \prod_{i\in e}V_i$ to the components indexed by $e$, thus $p_e(v) = ( v(i) )_{i\in e}$. (If $e$ is a singleton $\{i\}$ we write $p_i$ rather than $p_{\{i\}}$.) When there is no danger of confusion, we shall often use the notation $V_e$ to refer to the product $\prod_{i\in e} V_i$. 

The kind of hypergraph that we consider is the following. 

\begin{defn}
A \emph{$t$-partite $m$-colored $k$-uniform hypergraph}, or $(t,m,k)$-graph for short, is a triple $(V, C,E)$ consisting of the following elements. The \emph{vertex set} $V$ is the disjoint union of labelled sets $V_1,V_2,\ldots, V_t$. The set $C$ of \emph{edge color-classes} is a collection of $m$ distinct labelled sets $C_1,\ldots,C_m \in \binom{[t]}{k}$. The \emph{edge set} $E$ is the union of  sets $E_1,\ldots, E_m$ where each $E_j$ is a subset of $\prod_{i\in C_j} V_i$, the elements of which are the \emph{edges of color} $j$.
\end{defn}

We say that a $(t,m,k)$-graph is \emph{measurable} if there is a probability space structure $(V_i,\cV_i,\mu_i)$ on each vertex set $V_i$ (here $\cV_i$ denotes a $\sigma$-algebra of subsets of $V_i$, and $\mu_i$ a probability on $\cV_i$),  and  every set $E_j$ is in the product $\sigma$-algebra $\prod_{i\in C_j} \cV_i$. All the $(t,m,k)$-graphs that we consider in this paper are assumed to be measurable.

Given probability spaces $(V_i,\cV_i,\mu_i)$, $i\in [t]$, for any $e\subset [t]$ of size $|e| >1$ we shall denote by $(V_e,\cV_e,\mu_e)$ the product probability space $(\prod_{i\in e}V_i, \prod_{i\in e}\cV_i, \prod_{i\in e} \mu_i)$.

\begin{defn}[$(t,m,k)$-graph homomorphism]\label{defn:hom}
Let $H_1$ be a $(t,m,k)$-graph with vertex set $U=\bigsqcup_i U_i$, and let $H_2$ be a $(t,m,k)$-graph with vertex sets $V=\bigsqcup_i V_i$. A \emph{homomorphism} from $H_1$ to $H_2$ is a map $\phi:U\to V$ defined by $\phi(u)=\phi_i(u)$ for $u\in U_i$, where $(\phi_i)_{i\in [t]}$ is a $t$-tuple of measurable maps $\phi_i : U_i\to V_i$ with the following property: if $(u_i)_{i\in C_j} $ is an edge of $H_1$, then the image  $\big(\phi_i(u_i)\big)_{i\in C_j}$ is an edge of $H_2$.
\end{defn}
We say that $H_2$ is $H_1$-\emph{free} if there is no injective homomorphism $\phi:H_1\to H_2$. A measurable $(t,m,k)$-graph is \emph{finite} if the vertex sets $V_i$ are finite and the probabilities $\mu_i$ are uniform. In this paper we will only use homomorphisms from a finite $(t,m,k)$-graph to a possibly infinite $(t,m,k)$-graph. It is helpful to view these homomorphisms as points in the space $V_1^{U_1}\times V_2^{U_2}\times \cdots \times V_t^{U_t}$. Indeed, this leads naturally to the following definition of the homomorphism density, using the product probability on this space.

\begin{defn}
Let $F$ be a finite $(t,m,k)$-graph with vertex sets $U_i$, and let $H$ be a $(t,m,k)$-graph with vertex sets $V_i$. The  \emph{homomorphism density} of $F$ in $H$, denoted $\hd(F,H)$, is the probability that for a random $t$-tuple of maps $(\phi_i : U_i \to V_i)_{i\in [t]}$ the corresponding map $\phi$ is a homomorphism.\end{defn}

In particular, if $H$ has color-classes $C_1,\ldots, C_m$ and $F$ is the finite hypergraph with vertex set $[t]$ and edges $C_1,\ldots,C_m$, then, recalling that $(V_{[t]},\cV_{[t]},\mu_{[t]})$ denotes the product of the probability spaces $(V_i,\cV_i,\mu_i)$, we have
\begin{equation}\label{eq:homdens}
\hd(F,H)= \int_{V_{[t]}} \; \prod_{j\in [m]}\; 1_{E_j}\big( p_{C_j}(v)\big) \ud  \mu_{[t]} (v).
\end{equation}

For reasons that will become clear in the following sections, in this paper we only need this type of homomorphism $\phi: F\to H$ where each vertex class of $F$ is a singleton $U_i=\{i\}$. Note that any such homomorphism is an injective map, since the vertex classes of $H$ are disjoint by definition. We may sometimes refer to the image $\phi(F)=(\phi(i))_{i\in [t]}$ as a \emph{copy} of $F$ in $H$. In the general case, where $F$ may have more than one vertex per class, there is a similar but more complicated version of formula \eqref{eq:homdens}, but as mentioned above we shall not use this.

In the next subsection we shall obtain a removal lemma for $(t,m,k)$-graphs, Lemma \ref{lem:measrem}, by deducing it from the well-known removal lemma for finite hypergraphs. We shall then add the symmetry-preserving property in subsection \ref{secn:sympres}, obtaining the main result of this section, Lemma \ref{lem:simrem}.

\subsection{A removal lemma for $(t,m,k)$-graphs}
In this subsection we establish the following result.

\begin{lemma}\label{lem:measrem}
Let $t\geq k\geq 2$ and $m$ be positive integers, and let $0<\epsilon< 1$. There exists $\delta=\delta(t,k,\epsilon) >0$ such that the following holds. Let $H$ be a $(t,m,k)$-graph with vertex sets $V_i$, $i\in [t]$, and edge color-classes $C_j$, $j\in [m]$, let $F$ be the $(t,m,k)$-graph with vertex set $[t]$ and edges $C_j$, and suppose that $\hd(F,H)\leq \delta$. Then for each $j\in [m]$ there exists a measurable set $R_j\subset E_j(H)$ with $\mu_{C_j} (R_j) \leq \epsilon$, such that removing each $R_j$ from $E_j(H)$ yields an $F$-free $(t,m,k)$-graph.
\end{lemma}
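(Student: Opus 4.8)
The plan is to deduce this from the standard (finite) hypergraph removal lemma via a discretization-and-averaging argument, treating the $(t,m,k)$-graph $H$ on the probability spaces $(V_i,\cV_i,\mu_i)$ as a limit of finite hypergraphs. First I would fix a small parameter $\eta=\eta(t,k,\epsilon)>0$, to be chosen at the end, and invoke the finite hypergraph removal lemma to obtain a corresponding $\delta_0=\delta_0(t,k,\eta)>0$: any finite $(t,m,k)$-graph with homomorphism density of $F$ at most $\delta_0$ can be made $F$-free by deleting at most $\eta$-fraction of each color class. I will set $\delta$ in the statement to be a suitable function of $\delta_0$ (and of $t,k,\epsilon$). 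The key observation is that, by \eqref{eq:homdens}, $\hd(F,H)$ is an integral of a product of indicator functions $1_{E_j}\circ p_{C_j}$ over the product space, so it is an expectation that can be approximated by sampling finitely many vertices from each $V_i$.

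The core step is a \textbf{sampling argument}. Draw $N$ i.i.d. points $x_1^{(i)},\dots,x_N^{(i)}$ from each $(V_i,\mu_i)$, for a large $N=N(t,k,\eta)$, and form the random finite $(t,m,k)$-graph $H_N$ on vertex sets $[N]$ (with uniform measures), putting an edge of color $j$ on $(a_i)_{i\in C_j}$ precisely when $\big(x_{a_i}^{(i)}\big)_{i\in C_j}\in E_j$. A concentration/martingale argument (or the hypergraph-limit machinery: this is exactly the statement that $\hd(F,\cdot)$ and the edge densities $\mu_{C_j}(E_j)$ are continuous under sampling) shows that with positive probability, simultaneously: (a) $\hd(F,H_N)\leq 2\hd(F,H)\leq \delta_0$ provided $\delta$ is chosen with $2\delta\le\delta_0$; and (b) for every color $j$ and every ``cylinder structure'' that will matter, the empirical edge densities and the relevant lower-dimensional face densities in $H_N$ are within $\eta$ of their true values in $H$. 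Fix one such good sample. Applying the finite removal lemma to $H_N$ yields subsets $R_j^{(N)}\subset E_j(H_N)$, with $\mu_{C_j}^{\text{emp}}(R_j^{(N)})\le\eta$, whose deletion makes $H_N$ $F$-free.

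The final step is to \textbf{pull the deleted sets back} to measurable subsets of the $E_j$. The natural candidate is to define $R_j\subset E_j$ as (the measurable hull of) the set of points $\big(y_i\big)_{i\in C_j}\in\prod_{i\in C_j}V_i$ such that, when each $y_i$ is ``rounded'' to the nearest sampled point $x_{a_i}^{(i)}$, the resulting finite edge lies in $R_j^{(N)}$ --- equivalently, $R_j=\bigcup_{(a_i)\in R_j^{(N)}}\prod_{i\in C_j}B_{a_i}^{(i)}$ for a suitable measurable partition $\{B_a^{(i)}\}_{a\in[N]}$ of $V_i$ induced by the sample (e.g. Voronoi-type cells in the second-countable case, but one can equally just take an arbitrary measurable partition of each $V_i$ into $N$ atoms of equal measure and sample one representative per atom, which avoids any metric structure). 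Then $\mu_{C_j}(R_j)=\mu_{C_j}^{\text{emp}}(R_j^{(N)})\le\eta$ by construction (the partition is measure-preserving), so choosing $\eta\le\epsilon$ gives the required bound. It remains to check that $(E_j\setminus R_j)_{j\in[m]}$ is $F$-free: any copy of $F$ in $H$ avoiding all the $R_j$ would, after rounding each of its $t$ vertices to the enclosing atom, project to a copy of $F$ in $H_N$ avoiding all the $R_j^{(N)}$ --- contradicting $F$-freeness of the reduced $H_N$. The one point needing care here is that rounding must send edges to edges; this holds automatically because $R_j$ was defined as a union of full product boxes $\prod B_{a_i}^{(i)}$, so membership of a point $(y_i)_{i\in C_j}$ in $E_j$ is not what matters --- what matters is that if $(y_i)\notin R_j$ then its rounded version $(x_{a_i}^{(i)})$ is not in $R_j^{(N)}$, which is exactly how the boxes were chosen, and that the rounded version, being forced to be an edge of $H$ iff $(x_{a_i}^{(i)})\in E_j$, may fail to be an edge. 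To fix this last subtlety cleanly I would instead build $H_N$ from the atoms themselves: declare $(a_i)_{i\in C_j}$ an edge of $H_N$ iff $\prod_{i\in C_j}B_{a_i}^{(i)}\cap E_j$ has positive measure (``majority rounding''), and choose the partition fine enough (this is where $N$ must be large, using that $E_j$ is approximable from the product $\sigma$-algebra by finite unions of boxes) that this $H_N$ has $\hd(F,H_N)$ still at most $\delta_0$ and that the box-union $\widehat E_j:=\bigcup_{(a_i)\in E_j(H_N)}\prod B_{a_i}^{(i)}$ satisfies $\mu_{C_j}(E_j\triangle\widehat E_j)$ small; then the argument goes through with $\widehat E_j$ in place of $E_j$, absorbing the symmetric difference into $R_j$.

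The \textbf{main obstacle} is precisely this last matching between the continuous edge sets $E_j$ and a finite combinatorial hypergraph: one must choose the measurable partitions of the $V_i$ fine enough that (i) the ``quantized'' hypergraph $H_N$ faithfully records $\hd(F,H)$ up to a factor $2$, which requires controlling a $t$-dimensional integral of a product over $m$ faces by box-approximations of each $E_j$ in its own $k$-dimensional product $\sigma$-algebra, and (ii) the reconstructed sets $\widehat E_j$ differ from $E_j$ by arbitrarily small measure. Both are soft measure-theoretic facts (density of box-unions in the product $\sigma$-algebra, plus a triangle-inequality estimate splitting $\hd(F,H)-\hd(F,\widehat H)$ into $m$ one-face replacements), but setting up the quantifiers in the right order --- first $\epsilon$, then $\eta$ and hence $\delta_0$ from the finite lemma, then $N$ large, then $\delta$ --- is the part that needs to be written carefully.
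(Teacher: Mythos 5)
Your final version (after you discard the initial i.i.d.\ sampling idea) is essentially the paper's strategy: approximate the edge sets by finite unions of measurable boxes, extract a finite $(t,m,k)$-graph from the induced partitions of the $V_i$, invoke the finite removal lemma, and pull the removal sets back as unions of boxes. However, the way you set up the discretization has two genuine gaps.

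First, you propose partitioning each $V_i$ into $N$ cells \emph{of equal measure}, so that you can take one vertex per cell with uniform probability and have exact density transfer. For a general probability space $(V_i,\cV_i,\mu_i)$ (no atomlessness assumed, and the lemma must hold for all probability spaces) such a partition need not exist; moreover, even when it does, it need not refine the box factors $D_{i,j,r}$ that occur in the approximations of the $E_j$, which is exactly what you need in order to have $\widehat E_j$ (or $E_j^{(1)}$) be a union of atoms. The paper instead lets the cells $P_{i,r}$ have arbitrary measures (they are the atoms of the $\sigma$-algebra generated by all the $D_{i,j,r}$) and then models each $P_{i,r}$ by a block of $q_{i,r}\approx N\mu_i(P_{i,r})$ vertices inside a finite vertex set of size $N$; this is what makes the argument work on arbitrary probability spaces. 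Second, your ``positive-measure rounding'' (declare $(a_i)$ an edge of $H_N$ iff $\prod B_{a_i}^{(i)}\cap E_j$ has positive measure) does \emph{not} guarantee $\mu_{C_j}(\widehat E_j\setminus E_j)$ small just by refining the partition: a generic fine partition is not adapted to $E_j$ and can overcount massively. The paper avoids this by first choosing, for each $j$, a finite box-union $E_j^{(1)}=\bigsqcup_r B_{j,r}$ with $\mu_{C_j}(E_j\,\Delta\,E_j^{(1)})\leq\delta/m$, building the partition \emph{from} these boxes so that $E_j^{(1)}$ is exactly a union of atoms, running the whole removal argument on $H^{(1)}$, and finally adding $E_j\setminus\bigsqcup_r B_{j,r}$ into $R_j$ at the end.

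There is also a third point you skip over which becomes unavoidable once you adopt the robust unequal-cell-measure discretization: the removal sets $R_j'$ produced by the finite removal lemma on $H^{(2)}$ need not be unions of the product boxes $\prod_{i\in C_j}Q_{i,r_i}$, so they cannot be pulled back directly. The paper handles this by inflating $R_j'$ to the union $R_j''$ of those boxes in which $R_j'$ has density at least $1/m$; a counting argument shows $R_j''$ still has the removal property, and $\mu'_{C_j}(R_j'')\leq m\,\mu'_{C_j}(R_j')$. This density-threshold step is an essential ingredient you would need to add. In short, your plan is the right one, but to close it you should replace ``equal-measure atoms with one vertex each'' by ``atoms generated from an $L^1$ box-approximation of each $E_j$, modelled by $q_{i,r}$ vertices each,'' drop the positive-measure criterion in favour of working with the box-union approximant $E_j^{(1)}$ directly, and add the density-threshold pullback of the finite removal sets.
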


The finite version of this result, that is the special case in which both $F$ and $H$ are finite $(t,m,k)$-graphs, is a version of the well-known hypergraph removal lemma, given for instance in \cite{Tao}. Our task here is to show that the above version for arbitrary probability spaces follows from the finite version. To prove this we use a discretization argument whereby $H$ is approximated by a $(t,m,k)$-graph $H^{(1)}$ whose vertex sets are partitioned into finitely many parts, and whose edge sets are disjoint unions of products of some of these parts. Then, we model each of these parts by a finite set of vertices, the cardinality of which is chosen according to the measure of the part. This enables us to relate $\hd(F,H)$ with $\hd(F,H^{(2)})$ for some associated finite $(t,m,k)$-graph $H^{(2)}$, thus reducing the proof to an application of the finite version of Lemma \ref{lem:measrem}.

\begin{proof}[Proof of Lemma \ref{lem:measrem}]
Let $\delta'\leq \epsilon/(4m)$ be such that the finite version of Lemma \ref{lem:measrem} holds with parameters $\epsilon/(4m),t,k$. (As mentioned above, this finite version is known; indeed it is essentially \cite[Corollary 1.14]{Tao}.)
Suppose that $\hd(F,H)\leq \delta$ with $\delta=\delta'/2$.\\
\indent For each $j\in [m]$, since the $\sigma$-algebra $\cV_{C_j}$ on $V_{C_j}=\prod_{i\in C_j} V_i$ is generated by products of measurable subsets of the components $V_i$, there exist disjoint sets $B_{j,1},B_{j,2},\ldots, B_{j,M_j}$, each of the form $B_{j,r}=\prod_{i\in C_j} D_{i,j,r}$ with $D_{i,j,r}\in \cV_i$, satisfying 
\begin{equation}\label{eq:approx1}
\mu_{C_j}\left(E_j(H) \; \Delta \; \bigsqcup_{r=1}^{M_j} B_{j,r}\right) \leq \delta/m \leq \epsilon/2.
\end{equation}
Let $H^{(1)}$ be the $(t,m,k)$-graph obtained from $H$ by replacing the edge sets $E_j(H)$ with $E^{(1)}_j:=  \bigsqcup_{r=1}^{M_j} B_{j,r}$. By \eqref{eq:approx1} and a simple telescoping argument using multilinearity of the function $(1_{E_1},\ldots,1_{E_m})\mapsto \prod_{j\in [m]}1_{E_j}\circ p_{C_j}$, we have\footnote{We illustrate the argument for $m=3$: for any functions $f_j,g_j:V_{[t]}\to \R$, $j\in [3]$, we have $f_1 f_2 f_3 = (f_1-g_1) f_2 f_3 + g_1 f_2 f_3 = (f_1-g_1) f_2 f_3 + g_1 (f_2-g_2) f_3+ g_1  g_2 f_3 = (f_1-g_1) f_2 f_3 + g_1 (f_2-g_2) f_3 + g_1 g_2 (f_3- g_3)
+ g_1 g_2 g_3$; we then apply this with $f_j=1_{E^{(1)}_j}\circ p_{C_j}$ and $g_j=1_{E_j}\circ p_{C_j}$.}
\[
\hd(F,H^{(1)})\leq \hd(F,H)+m \frac{\delta}{m}\leq \delta'.
\]
We shall now show that $H^{(1)}$ can be made $F$-free by removing a set of measure at most $\epsilon/2$ from each set $E_j^{(1)}$.\\
\indent For each $i\in [t]$ we define a partition of $V_i$ generated by all the sets $D_{i,j,r}$.  More precisely,  let $\cP_i$ denote the partition of $V_i$ into the atoms of the finite $\sigma$-algebra generated by the collection of sets $\bigcup_{j\in [m]: \,C_j\ni i} \{D_{i,j, r}: r \in [M_j]\}$. 
Let $K_i= |\cP_i|$, thus $\cP_i=\{ P_{i,1}, P_{i,2},\ldots, P_{i,K_i}\}$. Each set $E_j^{(1)}$ is a disjoint union of sets of the form $\prod_{i \in C_j} P_{i,\ell_i}$ for some $\ell=(\ell_i)_{i\in C_j} \in \prod_{i\in C_j} [K_i]$. Thus $H^{(1)}$ can already be viewed as a finite hypergraph, with vertex sets  $\cP_1,\ldots, \cP_t$ and edges these $k$-tuples $\ell$. However, the measures of the atoms $P_{i,j}$ are not necessarily equal, so the probabilities on the vertex sets of this hypergraph may fail to be uniform. In order to apply the finite version of the removal lemma, we shall now approximate this weighted hypergraph by a finite $(t,m,k)$-graph $H^{(2)}$.\\
\indent Note that if $v=\big(v(1),\ldots,v(t)\big)$ is a copy of $F$ in $H^{(1)}$,  with $v(i)\in P_{i,r_i}\subset V_i$ for each $i\in [t]$, then in fact every point in $P_{1,r_1}\times \cdots \times P_{t,r_t}$  is such a copy, and this product set gives us a measure $\mu_1(P_{1,r_1})\cdots \mu_t(P_{t,r_t})$ of homomorphisms $F\to H^{(1)}$.\\
\indent Let $N$ be a large positive integer to be determined below,  depending on $k,t,\epsilon$ and the measure of the atoms $P_{i,r_i}$.\\
\indent Let $H^{(2)}$ be the finite $(t,m,k)$-graph defined as follows.   The finite vertex sets, denoted $V_1',\ldots,V_t'$, are each of  cardinality $N$, with uniform probability denoted $\mu_i'$. Each set $V_i'$ is partitioned into sets $Q_{i,0},Q_{i,1},\ldots, Q_{i,K_i}$, such that we have
\begin{equation}
\forall\, r\in [K_i],\; |Q_{i,r}|= q_{i,r},\;\; q_{i,r}/N \leq \mu_i(P_{i,r})< (q_{i,r}+1)/N,\;\textrm{ and }\; |Q_{i,0}|= q_{i,0}\leq K_i.
\end{equation}
The edge color classes of $H^{(2)}$ are the same as for $H^{(1)}$, and for each such class $C_j$ the edge set $E_j^{(2)}$ of $H^{(2)}$ is defined as follows. A $k$-tuple $(v(i))_{i\in C_j}\in V_{C_j}'$ is an edge in $E_j^{(2)}$ if and only if 
 $\prod_{i\in C_j} P_{i,r_i}\subset E_j^{(1)}$, where $v(i)\in Q_{i,r_i}$ for each $i\in C_j$. In other words, $E_j^{(2)}$ is the disjoint union of all the sets $\prod_{i\in C_j} Q_{i,r_i}$ satisfying $\prod_{i\in C_j} P_{i,r_i} \subset E_j^{(1)}$.\\
\indent Since each set $Q_{i,r}$ satisfies $\mu_i'(Q_{i,r})\leq \mu_i(P_{i,r})$, we have $\hd(F,H^{(2)})\leq \hd(F,H^{(1)})\leq \delta'$. By the finite version of the removal lemma, there exist sets $R_j'\subset E_j^{(2)}$ with $\mu_{C_j}'(R_j')\leq \epsilon/(4m)$ such that, removing each $R_j'$ from $E_j^{(2)}$, the resulting hypergraph is $F$-free.\\
\indent Let us now use the sets $R_j'$ to specify which subsets to remove from $E_j^{(1)}$. To do so, we first show that each $R_j'$ may be replaced with a set $R_j''$ that is a union of sets of the form $\prod_{i\in C_j} Q_{i,r_i}$, in such a way that the sets $R_j''$ still have small measure and preserve the removal property.\\
\indent Let $R_j''$ be the union of sets $\prod_{i\in C_j} Q_{i,r_i}$ such that 
\[
\Big|R_j'\cap \prod_{i\in C_j} Q_{i,r_i}\Big| \geq m^{-1} \prod_{i\in C_j} q_{i,r_i}.
\] 
We have $|R_j''|\leq m |R_j'|$, and so $\mu'_{C_j}(R_j'')\leq m\, \mu'_{C_j}(R_j')\leq \epsilon/4$.\\
\indent We claim that removing $R_j''$ (instead of $R_j'$) from $E_j^{(2)}$ still yields an $F$-free $(t,m,k)$-graph. Indeed,  suppose that $v_0\in \prod_{i\in [t]}Q_{i,r_i}$ is a copy of $F$ in $H^{(2)}$. Then, by the definition of $H^{(2)}$, every element $v\in \prod_{i\in [t]}Q_{i,r_i}$ is such a copy. By the removal property of the sets $R_j'$, for any such $v$ there   exists $j\in [m]$ such that the edge $p_{C_j}(v)$ lies in $R_j'$. There must therefore exist $j\in [m]$ such that there are at least $m^{-1}\prod_{i\in [t]}q_{i,r_i}$ such copies $v$ with $p_{C_j}(v)\in R_j'$. On the other hand, an edge $w\in \prod_{i\in C_j}Q_{i,r_i}$ can satisfy $w=p_{C_j}(v)$ for at most $\prod_{i\in [t]\setminus C_j}q_{i,r_i}$ of these copies $v$. We therefore conclude that $\Big|R_j'\cap \prod_{i\in C_j} Q_{i,r_i}\Big| \geq m^{-1} \prod_{i\in C_j} q_{i,r_i}$. Hence all these copies (including $v_0$) have $p_{C_j}(v)\in R_j''$ and are therefore eliminated by removing $R_j''$. This proves our claim.\\
\indent We can now specify the sets $R_j^{(1)}$ that we remove from $E_j^{(1)}$. Let $R_j^{(1)}$ be the union of sets $\prod_{i\in C_j} P_{i,r_i} \subset E_j^{(1)}$ such that  $\prod_{i\in C_j}Q_{i,r_i} \subset R_j''$. Note that 
\begin{equation}\label{eq:measbounds}
\mu_{C_j}\Big(\prod_{i\in C_j}P_{i,r_i}\Big)
\leq \prod_{i\in C_j} \Big(\mu'_i (Q_{i,r_i})+\frac{1}{N}\Big)\leq \mu'_{C_j} \Big(\prod_{i\in C_j}Q_{i,r_i}\Big)+\frac{2^k}{N}.
\end{equation}
Choosing $N>\frac{2\cdot 2^k}{\epsilon} \max_{j\in[m]} \left(\prod_{i\in C_j} K_i\right)$, we deduce from \eqref{eq:measbounds} that
\begin{equation}\label{eq:rem_mes}
\mu_{C_j}\Big(R_j^{(1)}\Big)\leq \mu_{C_j}'\Big(R_j''\Big)+\Big(\prod_{i\in C_j}K_i\Big) \frac{2^k}{N}\leq \frac{\epsilon}{2}, \textrm{ for each }j\in [m].
\end{equation}
If there was a copy $v$ left in $\bigcap_{j\in [m]}p_{C_j}^{-1}\Big(E^{(1)}\setminus R_j^{(1)}\Big)$, then there would have to be in fact a measure $\mu_{[t]}\Big(\prod_{i\in [t]}P_{i,r_i}\Big)$ of such copies, where $v(i)\in P_{i,r_i}$ for each $i\in [t]$. Therefore, by an analogue of \eqref{eq:measbounds}, there would be a measure at least $\mu_{[t]}\Big(\prod_{i\in [t]}P_{i,r_i}\Big)-\frac{2^t}{N}$ of copies of $F$ in $ \bigcap_{j\in [m]}p_{C_j}^{-1}\Big(E_j^{(2)}\setminus R_j^{(2)}\Big)$. If
\[
N > 2^t/\min\left\{ \mu_{[t]}\big(\prod_{i\in [t]}P_{i,r_i}\big): (r_i)\in \prod_{i\in C_j}[K_i],\; j\in [m]\right\},
\]
then there is at least one such copy of $F$, contradicting the removal property of the sets $R_j^{(2)}$.\\
\indent We now set $R_j=R_j^{(1)}\cup \Big(E_j(H) \setminus  \bigsqcup_{\ell=1}^{M_j} B_{j,\ell}\Big)$, which by \eqref{eq:approx1} and \eqref{eq:rem_mes} has measure at most $\epsilon$ for each $j\in[m]$, and the proof is complete.
\end{proof}

\begin{remark}\label{rem:moregenrem}
Lemma \ref{lem:measrem} concerns the so-called `partite hypergraph version' of the removal lemma (as it is called in \cite{Tao}), which corresponds to the case of formula \eqref{eq:homdens} in which $F$ has one vertex per class. This case suffices for our purposes in this paper, as we shall see in the next sections. Let us mention that there is a version of Lemma \ref{lem:measrem} where $F$ may have more than one vertex in each part $U_i$, and that in fact this extension can be deduced using Lemma \ref{lem:measrem}.
\end{remark}

\subsection{Preserving symmetries}\label{secn:sympres}

We now move on to the main result of this section, Lemma \ref{lem:simrem}. This is a version of Lemma \ref{lem:measrem} which preserves certain symmetries of the given hypergraph. The symmetries of $(t,m,k)$-graphs that we shall consider are described in terms of a type of group action on the product of the vertex sets, that we call a $t$-\emph{partite action} (see Definition \ref{defn:t-action}). To build up to this notion, we first recall the definition of a measurable group action (see for instance \cite[\S 3]{Var}). We denote the identity element of a group $G$ by $\id_G$.

\begin{defn}[Group action on a probability space]\label{defn:action}
Let $(V,\cV ,\mu)$ be a probability space, and let $\cG$ be a group. An \emph{action} of $\cG$ on $V$ is a map $\Phi: \cG\times V\to V$ satisfying the following properties:
\begin{enumerate}
\item $\forall\, v\in V$, $\forall\,g,h\in \cG$ we have $\Phi(gh,v) = \Phi(g,\Phi(h,v))$, and $\Phi(\id_{\cG},v)=v$.
\item For each $g\in \cG$ the invertible map $\Phi_g: v\mapsto \Phi(g,v)$ is measurable and preserves $\mu$, that is for any set $A\in \cV$, we have $\Phi_g^{-1}(A)\in \cV$ and $\mu(\Phi_g^{-1}(A) )= \mu(A)$. 
\end{enumerate}
In other words, the map $g\mapsto \Phi_g$ is a homomorphism from $\cG$ into the group of measure-preserving automorphisms of $V$.  If $\cG$ is a topological group, with Borel $\sigma$-algebra denoted $\cB_{\cG}$, then we say that the action $\Phi$ is  \emph{measurable} if the map $\Phi$ is measurable from $(\cG\times V,\cB_{\cG}\times \cV)$ to $(V,\cV)$.
\end{defn}

We shall often use the simpler notation $g\cdot v$ for $\Phi(g,v)$.

Given an action of $\cG$ on $(V,\cV,\mu)$, a set $B\in \cV$ is said to be $\cG$-\emph{invariant} if $g\cdot B = B$ for all $g\in \cG$. These sets form a sub-$\sigma$-algebra of $\cV$ that we denote by $\cE_{\cG}$. A measurable function $f:V\to \R$ is said to be $\cG$-\emph{invariant} if, for every $g\in \cG$, we have $f(g\cdot v)=f(v)$ for all $v\in V$. This is equivalent to $f$ being measurable with respect to $\cE_{\cG}$.\\
\indent In this paper we consider measurable actions mainly of \emph{compact} groups. We shall use the following simple notion of the average of a measurable function with respect to such an action. (We shall only need to take the average of non-negative functions.)

\begin{defn}
Let  $(V,\cV ,\mu)$ be a probability space, let $\cG$ be a compact group with Haar probability measure $\mu_{\cG}$, and let $\Phi:\cG\times V\to V$ be a measurable action. Then, for any non-negative measurable function $f:V\to \R$, we denote by $\cT_{\cG}(f)$ the non-negative measurable function defined by $\cT_{\cG}(f)(v)= \int_{\cG} f(g^{-1}\cdot v) \ud\mu_{\cG}(g)$.
\end{defn}

From our assumptions we have that the function $(g,v)\mapsto f(g^{-1}\cdot v)$ is $(\cB_{\cG}\times \cV)$-measurable. By Fubini's theorem \cite[Theorem 8.8]{Rud}, we therefore have that $\cT_{\cG}(f)$ is indeed a $\cV$-measurable function, and satisfies
\begin{equation}\label{eq:Fubini}
\int_V  \cT_{\cG}(f)(v) \ud\mu(v) = \int_{\cG} \left(\int_V f(g^{-1}\cdot v) \ud\mu(v)\right)\ud\mu_{\cG}(g) = \int_{\cG\times V} f(g^{-1}\cdot v)\ud (\mu_{\cG} \times\mu).
\end{equation}
Note also that for any non-negative measurable functions $f,g$ on $V$ we have $\cT_{\cG}(f+g)=\cT_{\cG}(f)+\cT_{\cG}(g)$, and in particular if  $f\geq g$ then $\cT_{\cG}(f)\geq \cT_{\cG}(g)$.\\
\indent A more general notion of averaging can be given in terms of the conditional expectation relative to the $\sigma$-algebra $\cE_{\cG}$, but the above definition is more convenient for us. (We discuss this in Remark \ref{rem:condexp}.)

\begin{defn}[$t$-partite action]\label{defn:t-action}
Let $(V_i,\cV_i,\mu_i),i\in [t]$, be probability spaces, and let $\cG$ be a topological group. We say that an action $\Phi: \cG\times V_{[t]}\to V_{[t]}$ is a $t$-\emph{partite action} if it is of the following form: for each $i\in [t]$ there is a topological group $G_i$ with a measurable action $\Phi_i: G_i\times V_i\to V_i$, such that $\cG$ is a closed subgroup of $G_1\times\cdots\times G_t$ (in the product topology) and for every $g\in \cG,v\in V_{[t]}$ we have $\Phi(g,v) (i) = \Phi_i(g(i),v(i))$ for each $i\in [t]$.
\end{defn}

In the next section we shall focus on $t$-partite actions where each $V_i$ is a second-countable compact abelian group $G_i$ acting on itself by addition. For the main results of this section, however, we can work with more general $t$-partite actions of compact groups. Let us record the following basic fact.
\begin{lemma}\label{lem:t-act-meas}
A $t$-partite action is a measurable action.
\end{lemma}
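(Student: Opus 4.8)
The plan is to verify the two defining conditions of a measurable action (Definition \ref{defn:action}) for a $t$-partite action $\Phi$, the main work being the joint measurability of $\Phi$ as a map $\cG\times V_{[t]}\to V_{[t]}$. We write $\Phi(g,v)(i)=\Phi_i(g(i),v(i))$, where each $\Phi_i:G_i\times V_i\to V_i$ is a measurable action of a topological group $G_i$, and $\cG$ is a closed subgroup of $\prod_{i\in[t]}G_i$.

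First I would check the algebraic action axioms. Since $\Phi(g,v)(i)=\Phi_i(g(i),v(i))$ is computed coordinatewise and each $\Phi_i$ satisfies the cocycle identity and fixes $\id_{G_i}$, we get $\Phi(gh,v)(i)=\Phi_i\big(g(i)h(i),v(i)\big)=\Phi_i\big(g(i),\Phi_i(h(i),v(i))\big)=\Phi(g,\Phi(h,v))(i)$, and $\Phi(\id_\cG,v)(i)=\Phi_i(\id_{G_i},v(i))=v(i)$, so these hold componentwise and hence in $V_{[t]}$. For the measure-preserving property, fix $g\in\cG$; then $\Phi_g$ acts as $\big(\Phi_{g(i)}\big)_{i\in[t]}$, i.e.\ as a product of the measure-preserving maps $v(i)\mapsto\Phi_i(g(i),v(i))$ on the factors $(V_i,\cV_i,\mu_i)$. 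Such a product map is measurable for the product $\sigma$-algebra $\prod_i\cV_i$ (it suffices to check on measurable rectangles $\prod_iA_i$, whose preimage is $\prod_i\Phi_{g(i)}^{-1}(A_i)$), and it preserves the product measure since $\mu_{[t]}\big(\prod_i\Phi_{g(i)}^{-1}(A_i)\big)=\prod_i\mu_i\big(\Phi_{g(i)}^{-1}(A_i)\big)=\prod_i\mu_i(A_i)=\mu_{[t]}\big(\prod_iA_i\big)$, and rectangles generate $\cV_{[t]}$. Invertibility of $\Phi_g$ follows from $\Phi_{g^{-1}}$ being its inverse, again by the cocycle identity.

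The main obstacle, and the only nontrivial point, is joint measurability: that $\Phi$ is measurable from $\big(\cG\times V_{[t]},\,\cB_\cG\times\cV_{[t]}\big)$ to $\big(V_{[t]},\cV_{[t]}\big)$. Since the target $\sigma$-algebra $\cV_{[t]}=\prod_i\cV_i$ is generated by the coordinate projections $p_i:V_{[t]}\to V_i$, it suffices to show that each composite $(g,v)\mapsto p_i\big(\Phi(g,v)\big)=\Phi_i\big(g(i),v(i)\big)$ is $\big(\cB_\cG\times\cV_{[t]}\big)$-measurable into $V_i$. Now this map factors as
\[
\cG\times V_{[t]}\;\xrightarrow{\;(g,v)\mapsto(g(i),v(i))\;}\;G_i\times V_i\;\xrightarrow{\;\Phi_i\;}\;V_i.
\]
The second arrow is measurable from $\big(G_i\times V_i,\cB_{G_i}\times\cV_i\big)$ to $(V_i,\cV_i)$ because $\Phi_i$ is a measurable action by hypothesis. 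For the first arrow, the map $g\mapsto g(i)$ is the restriction to the closed subgroup $\cG\subset\prod_jG_j$ of the continuous coordinate projection $\prod_jG_j\to G_i$, hence continuous and so $\cB_\cG/\cB_{G_i}$-measurable; and $v\mapsto v(i)$ is the coordinate projection $V_{[t]}\to V_i$, which is $\cV_{[t]}/\cV_i$-measurable by definition of the product $\sigma$-algebra. Therefore $(g,v)\mapsto\big(g(i),v(i)\big)$ is measurable from $\big(\cB_\cG\times\cV_{[t]}\big)$ to $\big(\cB_{G_i}\times\cV_i\big)$, and composing with $\Phi_i$ gives the required measurability. (A minor care point: one should note that $\cB_\cG$ coincides with the trace on $\cG$ of $\prod_j\cB_{G_j}$, or at least contains the relevant preimages, which holds since $\cG$ carries the subspace topology; this is where we use that $\cG$ is a closed—indeed any—subgroup of the product with the product topology.) Combining the coordinatewise verifications with this joint-measurability argument establishes that $\Phi$ is a measurable action, proving Lemma \ref{lem:t-act-meas}.
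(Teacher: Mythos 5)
Your proof is correct and follows essentially the same approach as the paper's: both reduce joint measurability of $\Phi$ to that of the individual actions $\Phi_i$ via the product structure. The only cosmetic difference is that the paper verifies measurability on preimages of measurable rectangles $\prod_i A_i$ (via a coordinate-permuting bijection $\cR$), whereas you verify measurability of the coordinate composites $p_i\circ\Phi$; these are dual generating descriptions of the product $\sigma$-algebra and yield the same argument.
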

\begin{proof}
The fact that a $t$-partite action $\Phi$ is indeed an action is  straightforward. To see that the measurability of each map $\Phi_i$ implies measurability of $\Phi$, it suffices to check this for an arbitrary product set $A=A_1\times\cdots\times A_t$, $A_i\in \cV_i$. To this end we note that  $\Phi^{-1} A = (\cG\times V_{[t]})\;\cap\;\cR\Big(\prod_i \Phi_i^{-1} A_i\Big)$, where $\cR:\prod_i (G_i\times V_i)\to \Big(\prod_i G_i\Big) \times V_{[t]}$ is the map permuting the coordinates appropriately. We can then use the fact that each $\Phi_i^{-1} A_i$ lies in $\cB_{G_i}\times \cV_i$ to deduce that $\Phi^{-1} A$ lies in $\cB_{\cG}\times \cV_{[t]}$.
\end{proof}
Given a $t$-partite action of a compact group $\cG$ on $V_{[t]}$, and a non-empty set $e\subset [t]$, we denote by $\cG_e$ the closed subgroup $p_e(\cG)$ of $\prod_{i\in e} G_i$. Recall that the map $p_e$ is the coordinate projection corresponding to $e$. On the direct product $G_1\times\cdots\times G_t$, this map is a continuous homomorphism onto $\cG_e$. We can then define a measurable action $\Phi_e: \cG_e\times V_e\to V_e$ by $\Phi_e(g,v) (i) = \Phi_i(g(i),v(i))$. 

\begin{defn}[Invariant $(t,m,k)$-graph]\label{defn:InvHyp}
Let $(V_i,\cV_i,\mu_i),i\in [t]$, be probability spaces, and let $\cG$ be a topological group with a $t$-partite action $\cG\times V_{[t]}\to V_{[t]}$. A $(t,m,k)$-graph $H$ with vertex sets $V_i$ is said to be $\cG$\emph{-invariant} if for each $j\in [m]$, the edge set $E_j(H)$ is $\cG_{C_j}$-invariant.
\end{defn}

We shall use the following fact that relates averaging over $\cG$ to averaging over $\cG_e$, for each projection $p_e$. 

\begin{lemma}\label{lem:proj}
Let $(V_i,\cV_i,\mu_i),i\in [t]$, be probability spaces, and let $\cG\times V_{[t]}\to V_{[t]}$ be a $t$-partite action by a compact group $\cG$ with Haar probability. Then for any $e\in \binom{[t]}{k}$, for any non-negative measurable function $f: V_e\to \R$, we have
\begin{equation}\label{eq:proj}
\cT_{\cG} (f \circ p_e) = (\cT_{\cG_e} (f)) \circ p_e.
\end{equation}
\end{lemma}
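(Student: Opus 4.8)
The plan is to unwind both sides of \eqref{eq:proj} using the defining formula for $\cT_{\cG}$ and the structure of the $t$-partite action, and to check that the Haar measure $\mu_{\cG}$ pushes forward under $p_e$ to the Haar measure $\mu_{\cG_e}$. Fix $v\in V_e$. By definition,
\[
\cT_{\cG}(f\circ p_e)(w) = \int_{\cG} (f\circ p_e)(g^{-1}\cdot w)\,\ud\mu_{\cG}(g)
\]
for any $w\in V_{[t]}$ with $p_e(w)=v$; here the value depends only on $v$ because, by the $t$-partite structure, $p_e(g^{-1}\cdot w)(i) = \Phi_i(g(i)^{-1},w(i))$ for $i\in e$ depends only on the coordinates $(g(i))_{i\in e}=p_e(g)$ and on $(w(i))_{i\in e}=v$. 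Thus $p_e(g^{-1}\cdot w) = \Phi_e\big(p_e(g)^{-1}, v\big)$, so the integrand equals $f\big(\Phi_e(p_e(g)^{-1}, v)\big)$, a function of $g$ through $p_e(g)$ only.

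First I would make precise that $p_e:\cG\to\cG_e$ is a continuous surjective homomorphism of compact groups (this is exactly what is recorded in the paragraph preceding Definition \ref{defn:InvHyp}), and invoke the standard fact that the pushforward of Haar probability under a continuous surjective homomorphism of compact groups is again Haar probability: for any continuous $F:\cG_e\to\R$,
\[
\int_{\cG} F(p_e(g))\,\ud\mu_{\cG}(g) = \int_{\cG_e} F(h)\,\ud\mu_{\cG_e}(h).
\]
This is a consequence of the uniqueness of Haar probability, since $h\mapsto \int_{\cG}(F\circ L_h \circ p_e)\,\ud\mu_{\cG}$ defines a left-invariant mean on $C(\cG_e)$; a bounded-measurable version then follows by a monotone-class / monotone-convergence argument, which is all I need since $f$ (hence $F(h)=f(\Phi_e(h^{-1},v))$) is merely non-negative measurable rather than continuous. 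Applying this with $F(h)=f\big(\Phi_e(h^{-1},v)\big)$ gives
\[
\cT_{\cG}(f\circ p_e)(w) = \int_{\cG_e} f\big(\Phi_e(h^{-1},v)\big)\,\ud\mu_{\cG_e}(h) = \cT_{\cG_e}(f)(v) = \big(\cT_{\cG_e}(f)\circ p_e\big)(w),
\]
which is \eqref{eq:proj}. One should also note that $\cT_{\cG_e}(f)$ is a well-defined $\cV_e$-measurable function: $\Phi_e$ is a measurable action by the same reasoning as in Lemma \ref{lem:t-act-meas}, so Fubini applies as in \eqref{eq:Fubini}.

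The one genuinely non-routine point is the pushforward-of-Haar-measure claim in the generality needed here, namely for non-negative measurable (not just continuous) integrands and for a homomorphism of compact groups that need not be injective. I expect this to be the main obstacle, though a mild one: the continuous case is classical (uniqueness of Haar measure), and extension to bounded measurable functions is by the monotone class theorem together with outer-regularity of $\mu_{\cG_e}$, after which monotone convergence handles general non-negative $f$. Everything else — the identification $p_e(g^{-1}\cdot w)=\Phi_e(p_e(g)^{-1},v)$, the well-definedness of $\Phi_e$, and the fact that $\cT_{\cG}(f\circ p_e)(w)$ depends on $w$ only through $p_e(w)$ — is a direct bookkeeping consequence of Definition \ref{defn:t-action} and the discussion immediately preceding Definition \ref{defn:InvHyp}.
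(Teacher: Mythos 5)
Your proposal is correct and follows essentially the same route as the paper: observe that $p_e$ intertwines the actions ($p_e(g\cdot v)=p_e(g)\cdot p_e(v)$), note that $p_e:\cG\to\cG_e$ is a continuous surjective homomorphism so $\mu_{\cG}\circ p_e^{-1}=\mu_{\cG_e}$, and then change variables in the defining integral. The only stylistic difference is that you elaborate on the extension of the pushforward identity from continuous to non-negative measurable integrands; this is in fact automatic once one knows $p_e{}_*\mu_\cG=\mu_{\cG_e}$ as an equality of measures, since the abstract change-of-variables formula $\int F\circ T\,\ud\mu=\int F\,\ud(T_*\mu)$ then holds for all non-negative measurable $F$, so the "main obstacle" you flag is not really one.
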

\begin{proof}
The actions $\Phi,\Phi_e$ commute with $p_e$, that is we have
\begin{equation}\label{eq:t-partite}
p_e(g\cdot v)= p_e(g) \cdot p_e(v)\textrm{ for every }g\in \cG,v\in V_{[t]}.
\end{equation}
Moreover, the map $p_e:\cG\to \cG_e$ is a surjective continuous homomorphism. We therefore have $\mu_{\cG_e}=\mu_{\cG}\circ p_e^{-1}$, where  $\mu_{\cG},\mu_{\cG_e}$ are the Haar probabilities on $\cG,\cG_e$. Thus for any $v\in V_{[t]}$ we have
\begin{align*}
\cT_{\cG} (f \circ p_e) (v) & \;\;=  \int_{\cG} f\big(p_e(g^{-1}\cdot v)\big) \ud\mu_{\cG}(g)  \;\;\;\; = \;\; \int_{\cG} f\big(p_e(g)^{-1}\cdot p_e(v)\big) \ud\mu_{\cG}(g)\\
 & \;\;=  \int_{\cG_e} f\big(g_e^{-1}\cdot p_e(v)\big) \ud\mu_{\cG_e}(g_e) \;  =  \;\;\cT_{\cG_e}(f)\big(p_e(v)\big).  \qedhere
\end{align*}
\end{proof}

We can finally establish the main result of this section.

\begin{lemma}[Symmetry-preserving removal lemma]\label{lem:simrem}\hfill\\
Let $t\geq k \geq 2$ and $m$ be positive integers, and let $\epsilon>0$. There exists $\delta=\delta(t,k,\epsilon)>0$ such that the following holds. Let $(V_i,\cV_i,\mu_i),i\in [t]$, be probability spaces, let $H$ be a $(t,m,k)$-graph with vertex sets $V_i$ and edge color-classes $C_j$, let $\cG\times V_{[t]}\to V_{[t]}$ be a $t$-partite action by a compact group $\cG$ such that $H$ is $\cG$-invariant, let $F$ be the $(t,m,k)$-graph on $[t]$ with edges $C_1,\ldots,C_m$, and suppose that $\hd(F,H)\leq \delta$. Then for each $j\in [m]$ there exists a measurable set $S_j\subset E_j(H)$ with $\mu_{C_j}(S_j)\leq\epsilon$, such that  removing $S_j$ from $E_j(H)$ for each $j\in [m]$ yields an $F$-free $(t,m,k)$-graph that is still $\cG$-invariant.
\end{lemma}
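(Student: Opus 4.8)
The plan is to deduce Lemma~\ref{lem:simrem} from Lemma~\ref{lem:measrem} by a symmetrization argument: apply the non-symmetric removal lemma first, then average the removed sets over the group action to restore invariance, using the averaging operators $\cT_{\cG_{C_j}}$ and Lemma~\ref{lem:proj} to keep track of measures and of the removal property across the different edge-projections.

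First I would fix $\epsilon>0$ and choose the parameter $\delta=\delta(t,k,\epsilon)$ to be the one coming from Lemma~\ref{lem:measrem} applied with a suitably smaller target, say with $\epsilon$ replaced by some $\epsilon'$ to be optimized at the end (one expects $\epsilon' \sim \epsilon$ up to a constant; I would not be fussy about constants in the plan). Given $H$ with $\hd(F,H)\le \delta$, Lemma~\ref{lem:measrem} produces sets $R_j\subset E_j(H)$ with $\mu_{C_j}(R_j)\le\epsilon'$ such that removing the $R_j$ makes $H$ into an $F$-free graph. The key move is then to set
\[
S_j \;=\; \bigl\{\, w\in E_j(H) : \cT_{\cG_{C_j}}\!\bigl(1_{R_j}\bigr)(w) \;\ge\; \theta \,\bigr\}
\]
for an appropriate threshold $\theta\in(0,1)$ (say $\theta=1/m$, or to be chosen). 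Since $E_j(H)$ is $\cG_{C_j}$-invariant and $\cT_{\cG_{C_j}}(1_{R_j})$ is a $\cG_{C_j}$-invariant measurable function, the set $S_j$ is $\cG_{C_j}$-invariant and measurable, so removing the $S_j$ keeps $H$ $\cG$-invariant. For the measure bound I would use that $\int_{V_{C_j}} \cT_{\cG_{C_j}}(1_{R_j})\,\ud\mu_{C_j} = \mu_{C_j}(R_j)\le\epsilon'$ (this is the measure-preservation of the action, as in \eqref{eq:Fubini}), hence by Markov's inequality $\mu_{C_j}(S_j)\le \epsilon'/\theta$, which is $\le\epsilon$ for the right choice of $\epsilon'$ relative to $\theta$.

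The substantive step is verifying that removing the $S_j$ still destroys all copies of $F$. Suppose $v=(v(i))_{i\in[t]}$ were a copy of $F$ in the graph with edge sets $E_j(H)\setminus S_j$; then $p_{C_j}(v)\notin S_j$ for every $j$, i.e. $\cT_{\cG_{C_j}}(1_{R_j})(p_{C_j}(v))<\theta$ for every $j$. By Lemma~\ref{lem:proj}, $\cT_{\cG_{C_j}}(1_{R_j})\circ p_{C_j} = \cT_{\cG}(1_{R_j}\circ p_{C_j})$, so
\[
\cT_{\cG}\!\Bigl(\sum_{j\in[m]} 1_{R_j}\circ p_{C_j}\Bigr)(v)
= \sum_{j\in[m]} \cT_{\cG_{C_j}}(1_{R_j})\bigl(p_{C_j}(v)\bigr) < m\theta \le 1
\]
provided $\theta\le 1/m$. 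Unpacking the definition of $\cT_{\cG}$, this means $\int_{\cG}\sum_{j} 1_{R_j}(p_{C_j}(g^{-1}\cdot v))\,\ud\mu_{\cG}(g) < 1$, so there must exist $g\in\cG$ with $\sum_j 1_{R_j}(p_{C_j}(g^{-1}\cdot v)) = 0$, i.e. $p_{C_j}(g^{-1}\cdot v)\notin R_j$ for every $j$. Now $g^{-1}\cdot v$ is again a copy of $F$ in $H$ (the vertex classes are permuted to themselves by the $t$-partite action, and each edge $p_{C_j}(g^{-1}\cdot v) = g(C_j)^{-1}\cdot p_{C_j}(v)$ still lies in the $\cG_{C_j}$-invariant set $E_j(H)$), and it avoids every $R_j$ — contradicting the removal property from Lemma~\ref{lem:measrem}. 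Hence no such $v$ exists.

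I expect the main obstacle to be the measurability and Fubini bookkeeping rather than the combinatorial core: one must check that $\cT_{\cG_{C_j}}(1_{R_j})$ is genuinely $\cV_{C_j}$-measurable and $\cG_{C_j}$-invariant (this follows from Lemma~\ref{lem:t-act-meas}, the remark after \eqref{eq:Fubini}, and the fact that the $t$-partite structure makes $\cG_{C_j}$ act measurably on $V_{C_j}$), that $S_j\subset E_j(H)$ and is $\cG_{C_j}$-invariant (using invariance of $E_j(H)$ so that the threshold super-level set stays inside it), and that the integral identity $\int \cT_{\cG_{C_j}}(1_{R_j}) = \mu_{C_j}(R_j)$ holds — this is exactly \eqref{eq:Fubini} applied on $V_{C_j}$. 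A secondary point to state carefully is that $g^{-1}\cdot v$ is a copy of $F$, i.e. an injective homomorphism $[t]\to H$; injectivity is automatic since the vertex classes are disjoint and each $\Phi_i(g(i),\cdot)$ maps $V_i$ into $V_i$, and the homomorphism property is precisely the $\cG_{C_j}$-invariance of the $E_j(H)$ combined with \eqref{eq:t-partite}. Once the threshold $\theta=1/m$ and $\epsilon' = \epsilon/m$ are fixed, choosing $\delta = \delta_{\ref{lem:measrem}}(t,k,\epsilon/m)$ completes the argument.
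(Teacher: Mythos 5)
Your proposal is correct and follows essentially the same approach as the paper's proof: apply Lemma~\ref{lem:measrem} to get removal sets $R_j$, define $S_j$ as a super-level set of $\cT_{\cG_{C_j}}(1_{R_j})$, bound $\mu_{C_j}(S_j)$ by Markov, and invoke Lemma~\ref{lem:proj} to relate $\cT_{\cG}$ to $\cT_{\cG_{C_j}}$. The only cosmetic difference is that the paper derives a pointwise functional inequality via a telescoping identity and then applies $\cT_{\cG}$ to both sides, whereas you average $\sum_j 1_{R_j}\circ p_{C_j}$ directly and pick a group element $g$ whose translate of $v$ is a copy of $F$ in $H$ avoiding every $R_j$; both routes immediately give the same contradiction with the removal property of the $R_j$, and your choice of constants ($\theta=1/m$, $\epsilon'=\epsilon/m$) matches the paper's ($1/(2m)$, $\epsilon/(2m)$) up to harmless factors of $2$.
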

An equivalent version of the conclusion is that for each $j\in [m]$ there exists a $\cG_{C_j}$-invariant set $S_j\subset E_j(H)$ with $\mu_{C_j}(S_j)\leq\epsilon$, such that removing $S_j$ from $E_j(H)$ for each $j\in [m]$ yields an $F$-free $(t,m,k)$-graph.
\begin{proof}
Let $R_j\subset E_j(H),j\in [m]$, be the removal sets given by Lemma \ref{lem:measrem} applied with parameter $\delta$ such that $\mu_{C_j}(R_j)\leq \epsilon /(2 |E(F)|)=\epsilon /(2m)$. 

We define a new removal set $S_j\subset E_j(H)$ as follows:
\begin{equation}
S_j:= \{ v \in V_{C_j}: h_j (v) > 1/(2m)\},\textrm{ where } h_j:=\cT_{\cG_{C_j}}(1_{R_j}). 
\end{equation}
Note that $h_j$ is a $\cG_{C_j}$-invariant function, whence $S_j$ is a $\cG_{C_j}$-invariant measurable set. Moreover, we have $\mu_{C_j}(S_j)\leq \epsilon$. Indeed, by Markov's inequality and \eqref{eq:Fubini} we have
\[
\frac{\mu_{C_j}(S_j)}{2m} \leq  \int_{V_{C_i}} h_j (v)  \ud\mu_{C_j}(v)
 =  \int_{\cG_{C_j}} \left(\int_{V_{C_j}}1_{R_j} (g^{-1}\cdot v)  \ud\mu_{C_j}(v)\right) \ud\mu_{\cG_{C_j}}(g) =\mu_{C_j}(R_j).
\]
We now show that removing $S_j$ from $E_j$ for each $j\in [m]$ yields an $F$-free $(t,m,k)$-graph, i.e. that we have
\begin{equation}\label{eq:simremgoal}
\bigcap_{j\in [m]} p_{C_j}^{-1}(E_j\setminus S_j)=\emptyset.
\end{equation}
In other words, we show that the function $ \prod_{j\in [m]} 1_{E_j} \circ p_{C_j}$ is 0 everywhere on the region $\bigcap_{j\in [m]} p_{C_j}^{-1}(V_{C_j}\setminus S_j)$.

By a telescoping argument using multilinearity (similar to the one used in the proof of Lemma \ref{lem:measrem}), we have
\[
\prod_{j\in [m]} 1_{E_j} \circ p_{C_j} = \sum_{j\in [m]} 1_{R_j}\circ p_{C_j} \prod_{\ell\neq j} f_\ell \;\;+\;\;\prod_{j\in  [m]} (1_{E_j} - 1_{R_j})\circ p_{C_j},
\]
where for each $j$ we have $f_\ell= (1_{E_\ell}- 1_{R_\ell})\circ p_{C_\ell}$ if $\ell<j$, and  $f_\ell=1_{E_\ell}\circ p_{C_\ell}$ if $\ell>j$. It follows that 
\begin{eqnarray*}
 \prod_{j\in [m]} 1_{E_j} \circ p_{C_j} \leq  \sum_{j\in [m]} 1_{R_j}\circ p_{C_j} \;+ \;\prod_{j \in [m]} (1_{E_j} - 1_{R_j})\circ p_{C_j}\qquad \textrm{everywhere on } V_{[t]}.
\end{eqnarray*}
Let us now apply $\cT_{\cG}$ to both sides. Since each set $E_j$ is $\cG_{C_j}$-invariant, by \eqref{eq:t-partite} each set   $p_{C_j}^{-1}E_j$ is in the $\sigma$-algebra of $\cG$-invariant sets, and therefore so is their intersection. By the removal property of the sets $R_j$, we also have $\prod_{j \in [m]} (1_{E_j} - 1_{R_j})\circ p_{C_j}=0$ everywhere. Finally, by linearity and \eqref{eq:proj} we have $\cT_{\cG}\left(\sum_{j\in [m]} 1_{R_j}\circ p_{C_j} \right)=\sum_{j\in [m]}h_j \circ p_{C_j}$. Combining these facts, we conclude that 
\begin{equation}\label{eq:keyineq}
\prod_{j\in [m]} 1_{E_j} \circ p_{C_j} -  \sum_{j\in [m]}   h_j\circ p_{C_j} \leq 0 \quad\textrm{everywhere on } V_{[t]}.
\end{equation}
Now, on the region $\bigcap_{j\in [m]} p_{C_j}^{-1}(V_{C_j}\setminus S_j)$, the function $ \sum_{j\in [m]} h_j\circ p_{C_j}$ takes values at most $1/2$, by definition of the sets $S_j$. Therefore, if $\prod_{j\in [m]} 1_{E_j} \circ p_{C_j}(v)$ were positive for some $v$ in this region, then it would have to take value 1 at $v$ and then the left side of \eqref{eq:keyineq} would be positive at $v$, a contradiction.
\end{proof}

\begin{remark}\label{rem:condexp}
Recall that the conditional expectation relative to a sub-$\sigma$-algebra $\cE$ of $\cV$ can be defined on the Hilbert space $L^2(V,\cV,\mu)$ as the orthogonal projection to the closed subspace $L^2(V,\cE,\mu|_{\cE})$; the conditional expectation of $f\in L^2(V,\cV,\mu)$ relative to $\cE$ is denoted $\E(f|\cE)$. If a compact group $\cG$ with Haar probability has a measurable action on $(V,\cV,\mu)$ then one can show that $\cT_{\cG}$ agrees with the conditional expectation relative to the $\sigma$-algebra $\cE_{\cG}$ of $\cG$-invariant sets. More precisely, letting $f$ be any function class in $L^2(V,\cV,\mu)$, and letting $f'$ be any function in this class, we have that $\cT_{\cG}(f')$ is in the class $\E(f|\cE_{\cG})$ (this can be proved by showing that $\cT$ yields an orthogonal projection $L^2(V,\cV,\mu)\to L^2(V,\cE_{\cG},\mu|_{\cE_{\cG}})$). This conditional expectation relative to $\cE_G$ is defined even for actions that are not necessarily measurable. Thus one can obtain analogues of the results in this subsection for possibly non-measurable actions.  However, $\E(f|\cE_{\cG})$ defines a function only up to a null-set, and this introduces several additional technicalities. Arguments using $\cT_{\cG}$, as above,  are therefore more convenient for our purposes, in addition to being more explicit.\\
\indent In a similar vein, one can obtain analogues of the results in this section when each set $E_j(H)$ is only assumed to lie in the completion $\cV_{C_j}^*$ of $\cV_{C_j}$ relative to $\mu_{C_j}$. One can also define a group action $\Phi:\cG\times V\to V$ to be measurable in the weaker sense that $\Phi^{-1}$ takes values in the \emph{completion} of $\cB_{\cG}\times \cV$ relative to $\mu_{\cG}\times \mu$. One can then use the version of Fubini's theorem for completed product measures \cite[Theorem 8.12]{Rud}, but again this is less convenient for us.
\end{remark}

\section{Cayley $(t,m,k)$-graphs and systems of linear equations}\label{section:CayleyHypDefn}

Our aim now is to apply the results from the previous section to  prove Theorem \ref{thm:cag-rem-lem}. We shall in fact prove the following version first.

\begin{theorem}\label{thm:cag-rem-lem+}
Let $M \in \Z^{r \times m}$ satisfy $d_r(M)=1$. For any $\epsilon>0$, there exists $\delta=\delta(\epsilon,M)$, $0<\delta <1$, such that the following holds. Let $A_1,\ldots,A_m$ be Borel subsets of a second-countable compact Hausdorff abelian group $G$ such that
$\mu_{\ker_G M}\big(A_1\times\cdots\times A_m\cap \ker_G M \big) \leq \delta$, and for each $j\in [m]$ let $G^{(j)}$ denote the closed subgroup $p_j(\ker_G M)$ of $G$. Then for each $j\in [m]$ there exists a Borel  set $R_j\subset A_j\cap G^{(j)}$, such that $\mu_{G^{(j)}}(R_j)\leq \epsilon$ for all $j\in [m]$, and $\big(\prod_{j\in [m]} A_j\setminus R_j\big)\cap \ker_G M = \emptyset$.
\end{theorem}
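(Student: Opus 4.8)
The plan is to realise the solution set $\ker_G M$ as the edge-intersection pattern of a suitable $\cG$-invariant measurable $(t,m,k)$-graph built from $G$, and then feed this hypergraph into the symmetry-preserving removal lemma (Lemma \ref{lem:simrem}). The key device is the notion of a \emph{hypergraph representation} of the system $Mx=0$, promised in Definition \ref{defn:Cayley-rep} and constructed in Section \ref{section:FindRep}: under the hypothesis $d_r(M)=1$ one obtains, for the given second-countable compact abelian $G$, auxiliary compact abelian groups $G_1,\ldots,G_t$ (built from $G$ and the Smith normal form data of $M$), together with surjective continuous homomorphisms $\pi_i$ relating the $G_i$ to $G$, such that: (a) each column $C_j$ of $M$ corresponds to a $k$-subset of $[t]$; (b) membership $x\in\ker_G M$ is equivalent to the statement that a naturally associated point in $\prod_i G_i$ has each of its $C_j$-projections lying in a fixed ``diagonal-type'' Borel subgroup $D_j\subset \prod_{i\in C_j} G_i$. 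First I would use this representation to define, for the given sets $A_1,\ldots,A_m$, a measurable $(t,m,k)$-graph $H=H(A_1,\ldots,A_m)$: vertex sets $V_i:=G_i$ with Haar probability, color classes $C_j$, and edge set $E_j:=D_j\cap (\text{pullback of }A_j)$ — this is the Cayley $(t,m,k)$-graph of Definition \ref{defn:Cay-hyp}. Here $k$ is the number of nonzero entries governing each equation-block; one checks $t\ge k\ge 2$ so that Lemma \ref{lem:simrem} applies.

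Second, I would verify the invariance hypothesis. The group $\cG$ is taken to be the closed subgroup of $G_1\times\cdots\times G_t$ consisting of tuples that map to a common element under the $\pi_i$ — i.e. the ``constant'' tuples — acting on $V_{[t]}=\prod_i G_i$ coordinatewise by translation; this is manifestly a $t$-partite action of a compact group (Lemma \ref{lem:t-act-meas}). The diagonal subgroups $D_j$ are invariant under $\cG_{C_j}$ precisely because they are cosets/subgroups defined by the linear relations that the constant action preserves — this is exactly what ``invariant matrix'' structure buys us at the level of each $C_j$-block — so $H$ is $\cG$-invariant in the sense of Definition \ref{defn:InvHyp}. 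Meanwhile, a short Fubini computation identifies the homomorphism density: $\hd(F,H)$, where $F$ is the pattern hypergraph on $[t]$ with edges $C_j$, equals (up to the normalisation coming from passing between $\mu_{\prod G_i}$ on the diagonal and $\mu_{\ker_G M}$) the solution density $\mu_{\ker_G M}(A_1\times\cdots\times A_m\cap\ker_G M)$. Thus from the hypothesis $\mu_{\ker_G M}(\cdots)\le\delta$ we get $\hd(F,H)\le\delta'$ for a $\delta'$ controlled by $M$ alone.

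Third, apply Lemma \ref{lem:simrem} with parameter $\epsilon'$ (a function of $\epsilon$ and $M$, chosen to absorb the measure-normalisation constants): it yields $\cG_{C_j}$-invariant measurable sets $S_j\subset E_j$ with $\mu_{C_j}(S_j)\le\epsilon'$ such that deleting the $S_j$ makes $H$ $F$-free. Translating back through the representation: $F$-freeness of the depleted hypergraph says exactly that no point of $\prod_{j}(A_j\setminus(\text{image of }S_j))$ lies in $\ker_G M$, and the $\cG_{C_j}$-invariance of $S_j$ forces $S_j$ to be a pullback of a single Borel set $R_j\subset A_j\cap G^{(j)}$ (this is the payoff of symmetry preservation — without it the deleted sets would not descend to subsets of $G$), with $\mu_{G^{(j)}}(R_j)=\mu_{C_j}(S_j)\le\epsilon$ after the normalisation is accounted for. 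This gives the conclusion, including the refined localisation $R_j\subset A_j\cap G^{(j)}$.

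**Main obstacle.** The combinatorial–arithmetic translation — i.e. constructing the hypergraph representation and checking that its diagonal subgroups $D_j$ are simultaneously (i) Borel measurable with the right Haar-measure normalisation, (ii) $\cG_{C_j}$-invariant, and (iii) faithful, in the sense that the intersection pattern $\bigcap_j p_{C_j}^{-1}(D_j)$ recovers exactly $\ker_G M$ and not something larger — is the crux, and it is precisely where the hypothesis $d_r(M)=1$ is used (via the Smith normal form, to guarantee $\ker_G M\cong G^{m-r}$ and that the local diagonal conditions patch up to the global kernel with no parasitic solutions). The symmetry-preserving step itself is then essentially a black box; the work is all in Section \ref{section:FindRep}, showing such a representation exists for \emph{every} compact abelian group, uniformly. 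A secondary technical point is bookkeeping the various Haar-measure normalisations so that the final bound on $\mu_{G^{(j)}}(R_j)$ is genuinely $\le\epsilon$ with $\delta$ depending only on $\epsilon$ and $M$, not on $G$; this is routine but must be done carefully, and it is the reason one first proves the second-countable case (where the measurable structure is well-behaved) and only afterwards deduces Theorem \ref{thm:cag-rem-lem} for general compact $G$ by an inverse-limit / approximation argument.
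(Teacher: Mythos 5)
Your proposal follows essentially the same route as the paper: construct a hypergraph representation $\Psi\colon G_*^t\to G^m$ of $(M,G)$ (Definition~\ref{defn:Cayley-rep}), use it to build a Cayley $(t,m,k)$-graph $H$ with generators $A_j$ and symmetry group $\cG=\ker_{G_*}\Psi$ (Definition~\ref{defn:Cay-hyp}), and deduce Theorem~\ref{thm:cag-rem-lem+} by applying the symmetry-preserving removal lemma (Lemma~\ref{lem:simrem}), exactly as in Proposition~\ref{prop:keylink}, with the existence of the representation (Proposition~\ref{prop:repexist}) carrying the bulk of the work and being where $d_r(M)=1$ is used.

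A few points to correct, though, because they would mislead you when filling in details. First, the $\cG$-invariance of the Cayley hypergraph has \emph{nothing} to do with $M$ being an ``invariant matrix'' (column sums zero); Theorem~\ref{thm:cag-rem-lem+} does not assume that. The invariance is automatic from the Cayley structure: each edge set $E_j=\psi_{C_j}^{-1}(A_j)$ is a union of cosets of $\ker_{G_*}\psi_{C_j}=p_{C_j}(\cG)$ (condition (iii) of Definition~\ref{defn:Cayley-rep}), so it is $\cG_{C_j}$-invariant for any $M$. Second, there are no normalisation constants to absorb: since $\Psi$ is a surjective continuous homomorphism between compact abelian groups it is measure-preserving onto its image, hence $\hd(F,H)=\mu_{\ker_G M}(\prod_j A_j\cap\ker_G M)$ and $\mu_{G^{(j)}}(R_j)=\mu_{C_j}(S_j)$ hold exactly, and one can take $\delta$ directly from Lemma~\ref{lem:simrem}. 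Third, the vertex sets are all copies of a \emph{single} auxiliary group $G_*$, not $t$ different groups $G_1,\dots,G_t$, and the edge set is not a ``diagonal subgroup intersected with a pullback'' but simply the $\psi_{C_j}$-preimage of $A_j$. Finally, one must first reduce to the non-plain case via Lemma~\ref{lem:plaincase} before the representation can be built.
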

The added information here is firstly that we only need to remove elements from $A_j$ that are in the projection $p_j(\ker_G M)$ of the solution space (this is quite clear intuitively and is also the case in Theorem \ref{thm:cag-rem-lem}). Secondly, each set $R_j$ is small not just in the measure $\mu_G$ but in the possibly larger measure $\mu_{G^{(j)}}$. Indeed, note that if the index $\kappa_j=|G:G^{(j)}|$ is finite then we must have $\mu_{G^{(j)}}= \kappa_j \cdot \mu_G|_{G^{(j)}}$, where $\mu_G|_{G^{(j)}}$ denotes the restriction of $\mu_G$ to $G^{(j)}$. Thus, while the conclusion $\mu_{G^{(j)}}(R_j)\leq \epsilon$ above is roughly equivalent to the conclusion $\mu_G(R_j)\leq \epsilon$ in Theorem \ref{thm:cag-rem-lem} if $\kappa_j=O(\epsilon^{-1})$, the former conclusion is stronger otherwise. We explain the use of second countability in Remark \ref{rem:2ndcount}.\\ 
\indent To prove Theorem \ref{thm:cag-rem-lem+}, we want to find, given a system of linear equations of determinantal 1 on $G$, a certain invariant hypergraph that represents the system in such a way that the theorem follows from Lemma \ref{lem:simrem}. In \cite{SzegRem}, a notion of a finite \emph{Cayley hypergraph} was introduced and shown to give a representation of the desired kind for certain systems of equations on certain finite abelian groups, but it was not clear how far this method could be extended. The main objective for the remainder of this paper is to show that there is a general version of this framework that can handle all systems of determinantal 1.

We begin with a definition analogous to \cite[Definition 2.1]{SzegRem}.

\begin{defn}[Cayley $(t,m,k)$-graph]\label{defn:Cay-hyp}
We call a $(t,m,k)$-graph $H$ a \emph{Cayley $(t,m,k)$-graph} if it has the following properties. For each $i\in [t]$, the $i$-th vertex set is a compact group $G_i$ with Borel $\sigma$-algebra and Haar probability,
 and there is a closed subgroup $\cG$ of the direct product $\prod_{i\in [t]} G_i$ such that $H$ is invariant under the $t$-partite action of $\cG$ on $G_{[t]}$, where each $G_i$ acts on itself by left-multiplication. To specify these properties, we write $H\in H_{k,t}((G_i),C,\cG)$, where $C$ is the set of edge color-classes of $H$.
\end{defn}
To illustrate this, let us note briefly how as a special case one finds ``bipartite Cayley graphs" (as they are called in \cite{GB}, for instance). Let $t=k=2$, $m=1$, $C=\{ \{1,2\}\}$, let $G_1=G_2=G$ be a finite group, and let $\cG=\{(g,g):g\in G\}$ be the diagonal subgroup of $G\times G$. Then $H\in H_{2,2}(G,C,\cG)$ means that the edge set $E(H)$ is a union of right cosets of $\cG$. Using the map $(g_1,g_2)\mapsto g_2^{-1}g_1$, we can identify the quotient $\cG\backslash E$ with a set $A\subset G$, and thus see that $H$ is the bipartite Cayley graph on $G_1\sqcup G_2$ generated by $A$ (that is we have $(g_1,g_2)\in E$ if and only if $g_2^{-1}g_1\in A$).
\begin{remark}\label{rem:2ndcount}
By Lemma \ref{lem:t-act-meas}, the $t$-partite action in Definition \ref{defn:Cay-hyp} is measurable (in the sense of Definition \ref{defn:action}) if the action of each $G_i$ on itself by left-multiplication is measurable. The latter measurability of the group operation holds for any second-countable group $G$ (that is a topological group such that the underlying topological space has a countable base). Indeed, by continuity of multiplication the preimage of a Borel set $A\subset G$ is Borel in $G\times G$, i.e. it lies in the Borel $\sigma$-algebra $\cB_{G\times G}$. By second countability, we have that $\cB_{G\times G}$ equals the product $\sigma$-algebra $\cB_G\times \cB_G$ (see \cite[Lemma 6.4.2]{Boga}), so the action is  measurable. Without second countability, the $\sigma$-algebra $\cB_{G\times G}$  may be strictly larger than $\cB_G\times \cB_G$ (see \cite[Example 6.4.3]{Boga}). These facts, together with other aspects (such as Lemma \ref{lem:Haarquotient} below), make second countability a  useful assumption in Theorem \ref{thm:cag-rem-lem+}. Moreover, once this theorem has been proved, Theorem \ref{thm:cag-rem-lem} can be deduced using an inverse limit argument. This is done in Appendix \ref{app:A}. Thus, from now on we shall consider such invariant hypergraphs only on \emph{second-countable} compact groups.
\end{remark} 

Defining Cayley $(t,m,k)$-graphs in terms of invariance, as above, relates them clearly to the previous section. To relate them to arithmetic removal results, it is useful to describe the edge sets of such hypergraphs in terms of generating sets.

\begin{lemma}\label{lem:CayHypEdges}
Let $G_1,G_2,\ldots, G_t$ be second-countable compact  groups, let $\cG$ be a closed subgroup of $G_{[t]}$, and let $H\in H_{k,t}((G_i),C,\cG)$. For each $j\in [m]$, let $\psi_{C_j}$ denote the canonical map from $G_{C_j}:=\prod_{i\in C_j} G_i$ to the quotient topological space $p_{C_j}(\cG)\backslash G_{C_j}$. Then for each $j$ we have $E_j(H)=\psi_{C_j}^{-1}(A_j)$, where $A_j$ is the Borel set $\psi_{C_j}(E_j(H))$.
\end{lemma}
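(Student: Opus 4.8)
The plan is to show that the edge set $E_j(H)$ is saturated with respect to the cosets of $p_{C_j}(\cG)$, and then use this saturation to reconstruct $E_j(H)$ from its image in the quotient. First I would observe that, since $H$ is a Cayley $(t,m,k)$-graph, Definition \ref{defn:Cay-hyp} guarantees that $E_j(H)$ is invariant under the action of $\cG_{C_j} = p_{C_j}(\cG)$ on $G_{C_j}$ by left-multiplication. Concretely, this means that if $v \in E_j(H)$ and $g \in \cG_{C_j}$, then $g\cdot v \in E_j(H)$; equivalently, $E_j(H)$ is a union of right cosets $\cG_{C_j}\, v$. Writing $\psi_{C_j}: G_{C_j} \to \cG_{C_j}\backslash G_{C_j}$ for the canonical quotient map, this saturation property says precisely that $E_j(H) = \psi_{C_j}^{-1}(\psi_{C_j}(E_j(H)))$, which is the desired set-theoretic identity once we check that $A_j := \psi_{C_j}(E_j(H))$ is Borel in the quotient space.

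The main work is therefore the measurability claim: that $\psi_{C_j}(E_j(H))$ is a Borel subset of the quotient topological space $\cG_{C_j}\backslash G_{C_j}$. Here is where I expect the main obstacle to lie, and where second countability enters (cf. Remark \ref{rem:2ndcount}). The quotient space $\cG_{C_j}\backslash G_{C_j}$ is a compact Hausdorff space (since $\cG_{C_j}$ is a closed subgroup of the compact group $G_{C_j}$), and the quotient map $\psi_{C_j}$ is continuous, open, and surjective. For a saturated Borel set $B$ (i.e. $B = \psi_{C_j}^{-1}(\psi_{C_j}(B))$), openness of $\psi_{C_j}$ gives that $\psi_{C_j}$ maps open saturated sets to open sets, and one wants to bootstrap this to Borel sets. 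The cleanest route is: since $E_j(H) \in \prod_{i\in C_j}\cB_{G_i} = \cB_{G_{C_j}}$ (using second countability so that the product $\sigma$-algebra coincides with the Borel $\sigma$-algebra of the product) and $E_j(H)$ is $\cG_{C_j}$-saturated, and since $\psi_{C_j}$ is an open continuous surjection between second-countable spaces, the image of any saturated Borel set is Borel. One can prove this by checking that the collection of subsets $B$ of the quotient with $\psi_{C_j}^{-1}(B)$ Borel forms a $\sigma$-algebra containing the open sets, hence containing all Borel sets of the quotient; then, since $E_j(H) = \psi_{C_j}^{-1}(A_j)$ with $\psi_{C_j}^{-1}(A_j)$ Borel, one concludes $A_j$ is Borel by noting that $\psi_{C_j}$, being a quotient map, has the property that $A_j$ is Borel iff $\psi_{C_j}^{-1}(A_j)$ is Borel. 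Care must be taken to verify that the quotient topology is second countable and that this "Borel iff preimage Borel" equivalence genuinely holds for open quotient maps; the openness of $\psi_{C_j}$ (a standard fact for quotients of topological groups by closed subgroups) is the crucial ingredient making this work.

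Having established these two ingredients, the proof assembles quickly. The saturation of $E_j(H)$ under $\cG_{C_j}$ gives $E_j(H) = \psi_{C_j}^{-1}(\psi_{C_j}(E_j(H)))$ purely set-theoretically; the measurability discussion identifies $A_j = \psi_{C_j}(E_j(H))$ as a Borel subset of $\cG_{C_j}\backslash G_{C_j}$; and combining the two yields $E_j(H) = \psi_{C_j}^{-1}(A_j)$ with $A_j$ Borel, as claimed. I would state the argument for a single fixed $j\in[m]$, since the color-classes are treated independently. The one point deserving explicit mention is why $\cG_{C_j}$-invariance of $E_j(H)$ in the sense of Definition \ref{defn:InvHyp} (which refers to the $t$-partite action) is the same as saturation under left-multiplication by $\cG_{C_j}$: this is immediate from the definition of the $t$-partite action where each $G_i$ acts on itself by left-multiplication, so that $\cG_{C_j} = p_{C_j}(\cG)$ acts on $G_{C_j}$ coordinatewise by left-multiplication, and invariance of $E_j(H)$ under this action is exactly the statement that $E_j(H)$ is a union of right cosets of $\cG_{C_j}$.
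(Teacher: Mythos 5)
Your overall structure is correct: the set-theoretic identity $E_j(H)=\psi_{C_j}^{-1}(\psi_{C_j}(E_j(H)))$ follows immediately from $\cG_{C_j}$-invariance (saturation), and the genuine content of the lemma is that $A_j=\psi_{C_j}(E_j(H))$ is Borel in the quotient space. You correctly identify this as the point where second countability and topology matter. However, the argument you give for the measurability step has a gap that is essentially circular.

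Your $\sigma$-algebra argument shows that the collection $\mathcal{A}$ of sets $B$ in the quotient with $\psi_{C_j}^{-1}(B)$ Borel contains the Borel $\sigma$-algebra of the quotient; but this is only the trivial direction, a restatement of continuity of $\psi_{C_j}$. What the lemma actually needs is the converse inclusion: that a set $B$ in the quotient with $\psi_{C_j}^{-1}(B)$ Borel (and saturated) must itself be Borel. You invoke this as ``$\psi_{C_j}$, being a quotient map, has the property that $A_j$ is Borel iff $\psi_{C_j}^{-1}(A_j)$ is Borel,'' but quotient maps have no such general property. The defining feature of a quotient map concerns open sets only (open iff preimage open); it does not extend to the Borel $\sigma$-algebra without a further argument. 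In the language of your own setup: the $\sigma$-algebra generated by the saturated open subsets of $G_{C_j}$ is exactly $\{\psi_{C_j}^{-1}(B): B \text{ Borel in the quotient}\}$, and one must show this equals the $\sigma$-algebra of \emph{all} saturated Borel sets of $G_{C_j}$; that identity is what is being asked, and openness alone does not deliver it.

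The paper closes this gap with Lemma~\ref{lem:Haarquotient}, which it attributes to descriptive set theory (Kechris, Theorem~12.17 together with Corollary~15.2). The mechanism is: for a Polish group $G$ and closed subgroup $K$, there exists a Borel transversal $T$ for the cosets of $K$ (this is the nontrivial input); then $E_j(H)\cap T$ is Borel and $\psi_{C_j}$ restricted to it is injective, so by the Lusin--Souslin theorem its image $\psi_{C_j}(E_j(H)\cap T)=\psi_{C_j}(E_j(H))=A_j$ is Borel. An equivalent route, which fits the compactness you note, is the Arsenin--Kunugui projection theorem: $A_j$ is the projection of the Borel set $\{(x,y):\psi_{C_j}(x)=y,\ x\in E_j(H)\}$, whose vertical sections are compact, hence the projection is Borel. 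Either way, some genuine descriptive set theory is needed; the $\sigma$-algebra bootstrap you sketch cannot substitute for it.
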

Thus, the edge set of $H$ has the following form: $E(H) = \bigsqcup_{j\in [m]} \psi_{C_j}^{-1}(A_j)$. We call the sets $A_j$ the \emph{generators} of $H$.  When the groups $G_i$ are labelled copies of the same group $G$, we  write $H\in H_{k,t}(G,C,\cG)$. If we wish to specify the generators, we shall write $H=H_{k,t}(G,C,\cG,(A_j))$.\\
\indent The only thing there is to prove in Lemma \ref{lem:CayHypEdges} is that each generator $A_j$ is indeed a Borel set in $p_{C_j}(\cG)\backslash G_{C_j}$. This fact is not trivial, since a priori the $\sigma$-algebra of Borel sets on this quotient could be smaller than the $\sigma$-algebra obtained by pushing forward, via $\psi_{C_j}$, the Borel subsets of  $G_{C_j}$. In other words, we are using the following fact.

\begin{lemma}\label{lem:Haarquotient}
Let $G$ be a Hausdorff second-countable compact group, let $K$ be a closed subgroup of $G$, and let $\pi:G\to K\backslash G$ be the quotient map. Then for any $K$-invariant Borel set $E\subset G$, the set $\pi(E)$ is Borel.
\end{lemma}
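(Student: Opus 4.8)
The plan is to reduce the statement to a standard fact about quotients of second-countable compact groups, namely that the quotient map $\pi : G \to K\backslash G$ is continuous, open, proper, and that $K\backslash G$ is itself a second-countable compact metrizable space. First I would recall why these properties hold: compactness of $G$ gives compactness of the continuous image $K\backslash G$; since $K$ is closed and $G$ is compact Hausdorff, $G$ is normal and one checks that $K\backslash G$ is Hausdorff, hence (being a continuous image of a second-countable space under an open map) second countable and therefore metrizable by Urysohn. The map $\pi$ is open because $\pi^{-1}(\pi(U)) = KU$ is open for every open $U\subset G$. These are the ingredients I would cite rather than prove in detail.

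Next I would exploit $K$-invariance of $E$. For a $K$-invariant set we have $\pi^{-1}(\pi(E)) = E$, so $\pi(E)$ and $E$ determine each other; in particular $\pi$ restricted to the saturated set $E$ is still a well-behaved map onto $\pi(E)$. The key reduction is: it suffices to prove the claim when $E$ is \emph{closed} (equivalently compact), because the collection of $K$-invariant sets $E$ for which $\pi(E)$ is Borel is a $\sigma$-algebra — it is closed under countable unions since $\pi(\bigcup_n E_n) = \bigcup_n \pi(E_n)$, and closed under complements within the $K$-invariant sets since $\pi(G\setminus E) = (K\backslash G)\setminus \pi(E)$ when $E$ is $K$-invariant (here one uses $\pi^{-1}(\pi(E))=E$). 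The $K$-invariant Borel $\sigma$-algebra on $G$ is generated by the $K$-invariant closed sets (indeed by sets of the form $\pi^{-1}(C)$ for $C$ closed in $K\backslash G$, together with the fact that $K\backslash G$ is second countable so its Borel $\sigma$-algebra is generated by a countable base, whose closures generate everything). So if $\pi$ carries $K$-invariant closed sets to Borel sets, a monotone-class / $\sigma$-algebra argument finishes the job.

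Finally, for $E$ closed in the compact space $G$, the set $E$ is compact, hence $\pi(E)$ is compact, hence closed in the Hausdorff space $K\backslash G$, and in particular Borel. That completes the argument. In more detail, the $\sigma$-algebra step should be run as follows: let $\mathcal{A}$ be the family of all Borel sets $E\subset G$ such that $E$ is $K$-invariant and $\pi(E)$ is Borel in $K\backslash G$; I have just shown $\mathcal{A}$ contains all $K$-invariant closed sets and is closed under complementation (relative to $G$) and countable unions among $K$-invariant sets, so $\mathcal{A}$ is exactly the $\sigma$-algebra of $K$-invariant Borel subsets of $G$, which is what we wanted.

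The main obstacle — really the only subtle point — is making sure the reduction to closed sets is airtight: one must verify that the $K$-invariant Borel $\sigma$-algebra is genuinely generated by $K$-invariant closed sets, which relies on second countability of $G$ (so that $K\backslash G$ is second countable, its Borel sets generated by a countable base of open sets, and these in turn expressible via their complements which are $K$-invariant closed sets in $G$ after pulling back). Without second countability the quotient can be badly behaved and the pushforward $\sigma$-algebra may be strictly smaller than the Borel $\sigma$-algebra on $K\backslash G$, as the remark preceding the lemma already signals; so the hypothesis is used precisely here. Everything else is routine point-set topology (openness, properness and Hausdorffness of the quotient) that I would state with references rather than reprove.
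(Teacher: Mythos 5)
Your reduction to $K$-invariant closed sets hides a circularity, and the circularity sits exactly where the real content of the lemma lives. Write $\mathcal{I}$ for the $\sigma$-algebra of $K$-invariant Borel subsets of $G$. You correctly show that your family $\mathcal{A}$ (the $K$-invariant Borel sets $E$ with $\pi(E)$ Borel) contains every $K$-invariant closed set and is stable under complements and countable unions among $K$-invariant sets; this yields $\mathcal{A}\supset\sigma(\text{$K$-invariant closed sets})$. Since the $K$-invariant closed sets in $G$ are precisely the preimages $\pi^{-1}(C)$ of closed $C\subset K\backslash G$, and $\pi^{-1}$ commutes with $\sigma$-algebra generation, that generated $\sigma$-algebra equals $\pi^{-1}\big(\mathcal{B}(K\backslash G)\big)$. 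But the assertion you then make, that $\mathcal{I}$ \emph{equals} this $\sigma$-algebra, is exactly the statement of the lemma: if $E\in\mathcal{I}$ then $E=\pi^{-1}(\pi(E))$, so saying $E\in\pi^{-1}(\mathcal{B}(K\backslash G))$ is precisely saying $\pi(E)$ is Borel. Your parenthetical justification only establishes the identity $\sigma(\text{$K$-inv.\ closed})=\pi^{-1}(\mathcal{B}(K\backslash G))$ and does not bound $\mathcal{I}$ from above; there is no a priori reason a $K$-invariant Borel set should be obtainable from $K$-invariant sets of lower Borel complexity (for instance, a $K$-invariant $F_\sigma$ need not be a countable union of $K$-invariant closed sets), so invariance cannot simply be pushed down the Borel hierarchy level by level.

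The paper itself just cites the needed descriptive set theory \cite{Kec}. If you want to keep a self-contained flavor, a short repair is to replace your $\sigma$-algebra step with Suslin's theorem: by the point-set facts you recorded, both $G$ and $K\backslash G$ are compact metrizable, hence Polish. The sets $\pi(E)$ and $\pi(G\setminus E)$ are continuous images of Borel subsets of a Polish space, hence analytic in $K\backslash G$; since $E$ is $K$-invariant (i.e.\ $\pi$-saturated), these two analytic sets partition $K\backslash G$, so $\pi(E)$ is simultaneously analytic and coanalytic, and therefore Borel by Suslin's theorem. This is precisely where second countability enters, and it supplies the step your argument leaves open.
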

This follows from results in descriptive set theory, for instance combining \cite[Theorem 12.17 and Corollary 15.2]{Kec}.\\

We now focus on \emph{abelian} groups, and for these we shall now relate Cayley $(t,m,k)$-graphs to systems of linear equations. From now on, given $M\in \Z^{r\times m}$ and an abelian group $G$, we shall write $(M,G)$ to refer to the system  $Mx=0$ with $x\in G^m$. Recall that our aim is to construct some invariant hypergraph $H$ such that Theorem \ref{thm:cag-rem-lem+} for $(M,G)$ can be deduced from the symmetry-preserving removal lemma for $H$.\\
\indent One of the simplest examples of such a construction, the idea of which can be traced back to Ruzsa and Szemer\'edi \cite{R&S}, concerns Schur's equation $x_1+x_2=x_3$. Let us revisit this example in order to motivate our general construction.

\begin{example}[Schur's equation, $M=(1\quad\,\,1\,\,\,-1)$]\label{ex:Schur} 
Consider the homomorphism $G^3\to G^3$ given by the following matrix:
\begin{equation}\label{eq:SchurPsi}
\Psi = \begin{pmatrix}
1  & -1  &  0 \\
0  &  1  &  -1\\
1  &  0  &  -1\\
\end{pmatrix}.
\end{equation}
This homomorphism has image equal to $\ker_G M$. Moreover, the row structure of $\Psi$ allows us to define a very convenient tripartite Cayley graph, given Borel sets $A_1,A_2,A_3\subset G$. Indeed, let $H$ be the $(3,3,2)$-graph with three vertex sets equal to $G$, with edge color classes $C_1=\{1,2\}$, $C_2=\{2,3\}$, $C_3=\{1,3\}$, and with $j$-th edge-set $E_j=\psi_{C_j}^{-1} A_j$, where the map $\psi_{C_j}: G^{C_j}\to G$ is given by the $j$-th row of $\Psi$. (Thus for instance $\psi_{\{1,2\}}$ takes a couple $(v(1),v(2))$ from the product of the first two vertex sets to $v(1)-v(2)$.) Letting $F$ be the triangle graph with vertices $1,2,3$, it can be checked easily that for each homomorphism $v=(v(1),v(2),v(3))$ of $F$ in $H$, the image $x=\Psi(v)$ is an element of $A_1\times A_2\times A_3 \cap \ker_G M$, and that we have in fact $\mu_{\ker_G M}(A_1\times A_2\times A_3 \cap \ker_G M)=\hd(F,H)$. Moreover, $H$ is a Cayley graph invariant under the 3-partite action of $\cG=\ker_G \Psi$, which   means here that each Borel set $E_j$ is a union of cosets of $\ker_G \psi_{C_j}$. Therefore, if $S_j$ is a set of small measure that is also a union of such cosets, then removing it from $E_j$ corresponds to removing a subset of small measure from $A_j$. We can thus establish Theorem \ref{thm:cag-rem-lem+} for $(M,G)$ using Lemma \ref{lem:simrem}.
\end{example}

In order to generalize the argument above, we shall now define a type of group homomorphism $\Psi$ that will enable us to associate a useful invariant hypergraph with a given system $(M,G)$. The definition uses the following notation.\\
\indent For a group $G$ and a subset $e$ of $[t]$, we denote by $\gamma_e$ the homomorphism embedding the direct power $G^e$ into $G^t$, defined by letting $\gamma_e(g')$ be the element $g$ such that $g(i)=g'(i)$ for $i\in e$ and $g(i)=0_G$ otherwise.\\
\indent Given any abelian groups $G_1,G_2$, and $m,t\in \N$, any homomorphism $\Psi: G_1^t \to G_2^m$ can be viewed as an $m \times t$ matrix of homomorphisms $G_1\to G_2$, namely for each $(j,k)\in [m]\times [t]$ the entry $\Psi_{j,k}$ is the homomorphism $p_j \circ \Psi \circ \gamma_k: G_1\to G_2$. We denote by $\psi_j$ the $j$-th row of this matrix, that is the homomorphism $\psi_j := p_j\circ \Psi= \sum_{k\in [t]} \Psi_{j,k}\circ p_k : G_1^t \to G_2$.\\
\indent We write $\supp \psi_j$ for the set of $k\in [t]$ such that $\Psi_{j,k}$ is not the 0-homomorphism $G_1\to \{0_{G_2}\}$. When $\supp \psi_j$ is a proper subset $C_j$ of $[t]$, we will often want to work with the  homomorphism $\psi_j\circ \gamma_{C_j}: G_1^{C_j}\to G_2$ rather than with $\psi_j: G_1^t\to G_2$. To simplify the notation, we shall denote $\psi_j\circ \gamma_{C_j}$ by  $\psi_{C_j}$.\\ 
\indent We can now give the main definition of this section.

\begin{defn}[Hypergraph representation]\label{defn:Cayley-rep}
Let $G$ be an abelian group, and let $M\in \Z^{r\times m}$. A  $(t,m,k)$\emph{-representation} of the system $(M,G)$ is a  homomorphism $\Psi : G_*^t \to G^m$, for some abelian group $G_*$, such that the following conditions hold:
\begin{enumerate}
\item There are distinct sets $C_1,C_2,\ldots, C_m \in  \binom{[t]}{k}$ such that $\forall\, j\in [m]$, $\supp \psi_j \subset C_j$.
\item  $\Psi(G_*^t)=\ker_G M$. 
\item For each $j\in [m]$, we have $p_{C_j}(\ker_{G_*} \Psi)=\ker_{G_*} \psi_{C_j}$.
\end{enumerate}
When $G$ is second-countable compact, we require that the same be true for $G_*$, and that $\Psi$ be continuous.
\end{defn}
A simple example is given by the matrix $\Psi$ in \eqref{eq:SchurPsi}, which gives a $(3,3,2)$-representation for Schur's equation on any abelian group $G$, where we can take $G_*=G$.\\
\indent The following proposition is the main result of this section.

\begin{proposition}\label{prop:keylink}
Let $M\in\Z^{r\times m}$, let $G$ be a second-countable compact abelian group, and suppose that the system $(M,G)$ has a $(t,m,k)$-representation for some positive integers $t,k$. Then Theorem \ref{thm:cag-rem-lem+} holds for $(M,G)$ with $\delta(\epsilon)=\delta(t,k,\epsilon)$, where $\delta(t,k,\epsilon)$ is given by Lemma \ref{lem:simrem}.
\end{proposition}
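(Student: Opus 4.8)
The plan is to build, from a given $(t,m,k)$-representation $\Psi: G_*^t\to G^m$ of $(M,G)$, a Cayley $(t,m,k)$-graph $H$ to which Lemma \ref{lem:simrem} applies, and then translate its conclusion back into the desired statement about the sets $A_j\subset G$. First I would let $G_*$ play the role of each of the $t$ vertex groups, acting on itself by addition, and take $\cG=\ker_{G_*}\Psi$, a closed subgroup of $G_*^t$ by continuity of $\Psi$. Given the Borel sets $A_1,\ldots,A_m\subset G$ with $\mu_{\ker_G M}(\prod_j A_j\cap\ker_G M)\le\delta$, I would set $E_j:=\psi_{C_j}^{-1}(A_j)\subset G_*^{C_j}$ (a Borel set since $\psi_{C_j}$ is continuous), where $\psi_{C_j}=\psi_j\circ\gamma_{C_j}$ as in the setup, and let $H$ be the $(t,m,k)$-graph with these edge sets and color-classes $C_1,\ldots,C_m$. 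Condition (i) of Definition \ref{defn:Cayley-rep} guarantees the $C_j$ are distinct members of $\binom{[t]}{k}$, so this is well-formed; condition (iii), namely $p_{C_j}(\ker_{G_*}\Psi)=\ker_{G_*}\psi_{C_j}$, is exactly what makes each $E_j$ invariant under $\cG_{C_j}=p_{C_j}(\cG)$, since $\psi_{C_j}$ is constant on cosets of its kernel; thus $H$ is $\cG$-invariant in the sense of Definition \ref{defn:InvHyp}, and in fact a Cayley $(t,m,k)$-graph.

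The crucial bridge is the identity $\hd(F,H)=\mu_{\ker_G M}\big(\prod_j A_j\cap\ker_G M\big)$, where $F$ is the $(t,m,k)$-graph on $[t]$ with edges $C_j$. To see this I would use condition (ii), $\Psi(G_*^t)=\ker_G M$: a point $v\in G_*^t$ is a homomorphism (copy) of $F$ in $H$ precisely when $\psi_j(v)\in A_j$ for all $j$, i.e. when $\Psi(v)\in\prod_j A_j$, i.e. when $\Psi(v)\in\prod_j A_j\cap\ker_G M$. Since $\Psi:G_*^t\to\ker_G M$ is a continuous surjective homomorphism of compact groups, it pushes the Haar probability on $G_*^t$ forward to the Haar probability on $\ker_G M$; combining this with the defining formula \eqref{eq:homdens} for $\hd(F,H)$ gives the claimed equality. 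Hence the hypothesis $\hd(F,H)\le\delta(t,k,\epsilon)$ holds, and Lemma \ref{lem:simrem} supplies, for each $j$, a $\cG_{C_j}$-invariant Borel set $S_j\subset E_j$ with $\mu_{C_j}(S_j)\le\epsilon$ such that removing the $S_j$ makes $H$ $F$-free.

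It remains to convert the $S_j$ into sets $R_j\subset A_j\cap G^{(j)}$ with $\mu_{G^{(j)}}(R_j)\le\epsilon$ satisfying the removal conclusion. Here I would factor $\psi_{C_j}$ through the quotient: since $S_j$ is a union of cosets of $\ker_{G_*}\psi_{C_j}=p_{C_j}(\cG)$, Lemma \ref{lem:Haarquotient} (applicable by second countability of $G_*^{C_j}$) shows $S_j$ descends to a Borel set in the quotient, and under the isomorphism induced by $\psi_{C_j}$ between that quotient and $\psi_{C_j}(G_*^{C_j})$ — which by condition (ii) together with the row structure is exactly $p_j(\ker_G M)=G^{(j)}$ — the set $S_j$ corresponds to a Borel set $R_j\subset A_j\cap G^{(j)}$. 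The measure-preserving property of this quotient map (Haar on $G_*^{C_j}$ pushes to Haar on the quotient, which matches Haar $\mu_{G^{(j)}}$ on $G^{(j)}$) gives $\mu_{G^{(j)}}(R_j)=\mu_{C_j}(S_j)\le\epsilon$. Finally, $F$-freeness of $H$ after removing the $S_j$ translates, via the same correspondence $v\mapsto\Psi(v)$, into $\big(\prod_j A_j\setminus R_j\big)\cap\ker_G M=\emptyset$, which is the conclusion of Theorem \ref{thm:cag-rem-lem+} for $(M,G)$. The main obstacle I anticipate is the bookkeeping around quotients in the last step — verifying carefully that $\psi_{C_j}$ induces a \emph{topological} isomorphism $p_{C_j}(\cG)\backslash G_*^{C_j}\to G^{(j)}$ carrying Haar measure to Haar measure, so that the measure bounds and Borel-measurability of $R_j$ are genuinely justified rather than merely plausible; the invariance and density-matching steps are comparatively routine once the representation axioms are in hand.
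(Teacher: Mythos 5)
Your proposal is correct and follows essentially the same route as the paper: construct the Cayley $(t,m,k)$-graph with vertex groups $G_*$, $\cG=\ker_{G_*}\Psi$ and edge sets $\psi_{C_j}^{-1}(A_j)$, use condition (ii) and the measure-preserving surjectivity of $\Psi$ to identify $\hd(F,H)$ with $\mu_{\ker_G M}(\prod_j A_j\cap\ker_G M)$, apply Lemma \ref{lem:simrem}, and descend the invariant removal sets $S_j$ through the quotient isomorphism $G_*^{C_j}/p_{C_j}(\cG)\cong G^{(j)}$ (which the paper packages as Lemma \ref{lem:CayHypEdges} together with the isomorphism \eqref{eq:labels}). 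The quotient bookkeeping you flag as the main risk is exactly what the paper handles via conditions (ii)--(iii) and the first isomorphism theorem, so no gap remains.
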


\begin{proof}
Let $\Psi$ be a $(t,m,k)$-representation for $(M,G)$. Fix $\epsilon>0$ and let $\delta=\delta(t,k,\epsilon)>0$ be such that Lemma \ref{lem:simrem} holds.\\
\indent Let $A_1,A_2,\ldots,A_m$ be Borel subsets of $G$ such that
\[
\mu_{\ker_G M}( A_1\times \cdots \times A_m \cap \ker_G M)\leq \delta.
\]
We may assume that each $A_j$ is a subset of $G^{(j)}=p_j(\ker_G M)$, since the part of $A_j$ outside the latter subgroup does not contribute to the above measure.\\
\indent Let $H= H_{k,t}(G_*,C,\cG,(A_j))$ be the Cayley $(t,m,k)$-graph given by $\Psi$, that is the hypergraph with vertex sets $V_i=G_*$, with $j$-th edge-color-class $C_j$, with $\cG=\ker_{G_*} \Psi$, and with generators $A_j$, $j\in [m]$. Let $F$ be the $k$-uniform hypergraph on $[t]$ with edges $C_1,C_2,\ldots,C_m$.\\
\indent Note the following fact concerning the quotient group $G_*^{C_j}/p_{C_j}(\cG)$ from Definition \ref{defn:Cay-hyp}: 
\begin{equation}\label{eq:labels}
\forall\,j\in [m],\quad G_*^{C_j}/p_{C_j}(\ker_{G_*}\Psi)\;\; \cong \;\; \psi_j(G_*^t) \;\; = \;\; p_j(\ker_G M) =:G^{(j)},
\end{equation}
this being an isomorphism of compact abelian groups. Indeed, by condition (iii) of Definition \ref{defn:Cayley-rep}, we have $G_*^{C_j}/p_{C_j}(\ker_{G_*}\Psi)=G_*^{C_j}/\ker_{G_*} \psi_{C_j}$. By the first isomorphism theorem, this is isomorphic  as a compact abelian group to $\psi_{C_j}\big(G_*^{C_j}\big)$. Since $\supp \psi_j\subset C_j$, we have $\psi_{C_j}\big(G_*^{C_j}\big)=\psi_j(G_*^t)$. By definition of $\psi_j$, the latter group is $p_j\circ \Psi(G_*^t)$, and by condition (ii) this is $p_j(\ker_G M)$.\\
\indent Now, since the map $\Psi$ is measure-preserving from $G_*^t$ onto $\ker_G M$ (as a continuous surjective homomorphism between compact abelian groups), we have
\begin{eqnarray*}
\hd(F,H) & =  & \int_{G_*^t} \prod_{j\in [m]} 1_{E_j}(p_{C_j}(g)) \ud g \;\;= \;\; \int_{G_*^t} \prod_{j\in [m]} 1_{A_j}(\psi_j(g)) \ud g\\
& =  & 
  \mu_{\Psi(G_*^t)} \Big( A_1\times \cdots \times A_m  \cap \Psi\big(G_*^t\big)\Big)\\
& = & \mu_{\ker_G M}( A_1\times \cdots \times A_m \cap \ker_G M) \leq \delta.
\end{eqnarray*}
By Lemma \ref{lem:simrem}, for each $j\in [m]$ there exists a Borel set $S_j \subset E_j(H)$, such that by removing $S_j$ from $E_j(H)$ for each $j\in [m]$ we obtain a $(t,m,k)$-graph $H'$ that is $F$-free and $\cG$-invariant. In particular, each set $S_j$ is invariant under the action of $p_{C_j}(\cG)=\ker_{G_*}\psi_{C_j}$, so by Lemma \ref{lem:CayHypEdges} there is a Borel  set $R_j\subset  G^{(j)}$ such that $E_j\setminus S_j=\psi_{C_j}^{-1}(A_j\setminus R_j)$. We also have $\mu_{G^{(j)}}(R_j)= \mu_{C_j}(S_j)\leq \epsilon$. 
Moreover, the set 
$\big(\prod_{j\in [m]} A_j\setminus R_j\big) \cap \ker_G M$ must be empty,  for if it contained some element $x=(x_1,\ldots,x_m)$ then there would be $g\in G_*^t$ such that $\Psi(g)=x$ and such that  $g_{C_j}\in \psi_{C_j}^{-1}(A_j\setminus R_j)=E_j\setminus S_j$ for each $j\in [m]$, contradicting the removal property of the sets $S_j$.
\end{proof}

We close this section by recording the deduction of Theorem \ref{thm:cag-Szem} from Theorem \ref{thm:cag-rem-lem}.

\begin{proof}[Proof of Theorem \ref{thm:cag-Szem}]
Suppose that $A\subset G$ is measurable with $\mu_G(A) \geq  \alpha>0$. Apply Theorem \ref{thm:cag-rem-lem} with $\epsilon = \alpha/2m$. Let $c=\delta(\epsilon)$ and suppose that $\mu_{\ker_G M}(A^m\cap \ker_G M)< c$. Then by Theorem \ref{thm:cag-rem-lem} there exists a measurable set $R\subset A$ of measure at most $\alpha/2$ such that $A\setminus R$ is $M$-free. However, $A\setminus R$ has measure at least $\alpha/2>0$, so it is non-empty, therefore it is not $M$-free (by invariance of $M$), a contradiction.
\end{proof}

\section{Finding a hypergraph representation for a given linear system} \label{section:FindRep}

Having established Proposition \ref{prop:keylink}, the proof of Theorem \ref{thm:cag-rem-lem+} is reduced to the following task: given an integer matrix $M$ with determinantal 1 and any abelian group $G$, show that the system $(M,G)$ admits a $(t,m,k)$-representation, with $t,m,k$ depending only on $M$. In fact, we shall need to complete this task only for  matrices that do not satisfy the following property.

\begin{defn}\label{defn:plain}
We say that $M\in \Z^{r\times m}$ is \emph{plain} if there exists $\ell\in [m]$ such that $p_\ell(\ker_G M)=\{0_G\}$ for every abelian group $G$.
\end{defn}
This notion is a special case of that of a `thin system' from \cite{KSVGR}. Examples include any square matrix $M\in \Z^{r\times r}$ with $d_r(M)=\det M =1$, since this has $\ker_G M=\{0_{G^r}\}$.

The following result allows us to restrict the above-mentioned task to non-plain matrices. Recall from the previous section the definition of the embedding homomorphism $\gamma_e : G^e \to G^m$ for a given $e\subset [m]$.
\begin{lemma}\label{lem:plaincase}
Let $M\in \Z^{r\times m}$ be a plain matrix satisfying $d_r(M)=1$. Then either Theorem \ref{thm:cag-rem-lem+} holds for $M$, or for some $s\in [r-1]$ there exists a matrix $M'\in \Z^{(r-s)\times (m-s)}$ that is not plain and such that, for some set $C\subset [m]$ of size $m-s$, the map $\gamma_C$ yields a \textup{(}measure-preserving\textup{)} isomorphism from $\ker_G M'$ to $\ker_G M$.
\end{lemma}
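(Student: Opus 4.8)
The plan is to handle the plain matrix $M$ by peeling off, one coordinate at a time, those indices $\ell$ for which the $\ell$-th coordinate of the solution space is forced to vanish. Fix such an $\ell\in[m]$ with $p_\ell(\ker_G M)=\{0_G\}$ for every abelian group $G$. First I would record the meaning of this condition algebraically: the $\ell$-th coordinate being identically zero on $\ker_G M$ over all $G$ is equivalent, by the Smith normal form of $M$ (see \cite[Chapter II]{Newman}), to a statement about the columns of $M$ — roughly, that the $\ell$-th column of $M$ together with the relations imposed by the other columns leaves no freedom in coordinate $\ell$. Concretely, setting $x_\ell=0$ and deleting the $\ell$-th column produces a matrix $M^{(\ell)}\in\Z^{r\times(m-1)}$ with the property that $\gamma_{[m]\setminus\{\ell\}}$ restricts to a measure-preserving isomorphism $\ker_G M^{(\ell)}\xrightarrow{\sim}\ker_G M$ (measure-preserving because $\gamma$ is an isometric embedding of compact groups identifying Haar measures, and it is surjective onto $\ker_G M$ precisely because the $\ell$-th coordinate was already forced to be $0$). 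The key technical point I need here is that deleting the $\ell$-th column does not destroy the hypothesis $d_r=1$: since the nonzero $r\times r$ minors of $M^{(\ell)}$ form a subset of those of $M$, I must check that at least one $r\times r$ minor of $M^{(\ell)}$ is nonzero (i.e.\ that $M^{(\ell)}$ still has rank $r$) and that their gcd is still $1$ — the latter is automatic once rank is preserved, since the gcd of a subset of integers with gcd $1$ need not be $1$ in general, so this is exactly the place that requires care.

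To resolve that subtlety I would argue as follows: the condition $p_\ell(\ker_G M)=\{0_G\}$ for all $G$ implies, via Smith normal form, that there is no solution with nonzero $\ell$-th coordinate even over $\Q$ (take $G=\Q/\Z$ or $G=\R/\Z$ and a limiting/divisibility argument, or directly over $\Q$ by scaling), so the $\ell$-th column lies in the span of the others, hence $\mathrm{rank}(M^{(\ell)})=\mathrm{rank}(M)=r$. For the determinantal: expand an $r\times r$ minor of $M$ that uses column $\ell$ along that column; since column $\ell$ is an integer combination $\sum_{k\neq\ell} c_k (\text{column }k)$ is \emph{not} quite available (the $c_k$ need not be integers), instead I would use the Smith normal form directly — $d_r(M)=1$ means the $r$-th invariant factor of $M$ is $1$, i.e.\ the image of $\Z^m\xrightarrow{M}\Z^r$ has finite index coprime to nothing, in fact the cokernel's torsion is trivial up to the last invariant factor being $1$ means $M\Z^m$ contains a finite-index subgroup of $\Z^r$ of index $d_1\cdots d_{r-1}$... — the cleanest route is: $d_r(M)=1$ $\iff$ $\ker_G M\cong G^{m-r}$ for every $G$ (stated in the excerpt after \cite[Theorem II.9]{Newman}). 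Since $\gamma_{[m]\setminus\{\ell\}}$ gives $\ker_G M^{(\ell)}\cong\ker_G M\cong G^{m-r}=G^{(m-1)-(r)}$... wait, the dimension count forces $\mathrm{rank}(M^{(\ell)})=r$ and then $\ker_G M^{(\ell)}\cong G^{(m-1)-r}$ for all $G$, which by the same equivalence is precisely the statement $d_r(M^{(\ell)})=1$. This is the main obstacle and also its own resolution: I lean on the characterization "$d_r=1\iff\ker_G$ has the expected dimension for all $G$" rather than manipulating minors.

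With one coordinate removed I then iterate: if $M^{(\ell)}$ is still plain, repeat; each step strictly decreases $m$ (and keeps $r$ fixed, as just shown), so after $s\le ?$ steps — bounded because $\mathrm{rank}$ stays $r$ and $m$ drops, and we must stop before $m-s<r$ since a matrix with fewer columns than its rank cannot exist, in fact we stop at the first moment the matrix is non-plain or degenerates — I either reach a non-plain matrix $M'\in\Z^{(r-s)\times(m-s)}$... I should be careful: the statement says $M'\in\Z^{(r-s)\times(m-s)}$, so $r$ also drops, which happens when removing a column also lets us remove a redundant row; I would incorporate a row-reduction step (delete a row that becomes a $\Z$-linear combination of the others after the column deletion, which the rank bookkeeping detects) so that both dimensions decrease by the same $s$ and $d_{r-s}(M')=1$ is inherited. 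The alternative terminal case — "Theorem \ref{thm:cag-rem-lem+} holds for $M$" — occurs if the peeling process terminates with an empty matrix or with all solution coordinates forced to $0$, in which case $\ker_G M=\{0_{G^m}\}$, so the only configuration is the zero one; then the density hypothesis $\mu_{\ker_G M}(\prod A_j\cap\ker_G M)\le\delta<1$ forces $0_G\notin A_j$ for some $j$, and taking $R_j=\emptyset$ (or $R_j=\{0_G\}\cap A_j$, of measure $0$) already makes $\prod(A_j\setminus R_j)\cap\ker_G M=\emptyset$, so the conclusion holds trivially. Assembling these cases — non-plain output $M'$ with the isomorphism $\gamma_C$, or the trivial verification of Theorem \ref{thm:cag-rem-lem+} — gives the dichotomy in the statement. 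The only genuinely delicate step, as flagged, is the inheritance of the determinantal condition, which I discharge via the "expected dimension" characterization rather than by direct minor computations.
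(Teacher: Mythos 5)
Your central claim — that deleting the $\ell$-th column preserves the rank $r$ — is backwards, and this error propagates through the whole argument. The condition $p_\ell(\ker_\Q M)=\{0\}$ is equivalent to the $\ell$-th column \emph{not} being in the $\Q$-span of the other columns: if $x\in\ker_\Q M$ had $x_\ell\neq 0$, one could solve for $M^\ell$ in terms of the other columns, and conversely. So deleting column $\ell$ \emph{drops} the rank to $r-1$, not keeps it at $r$. This is exactly what the dimension count you paused over is telling you: $\gamma_{[m]\setminus\{\ell\}}$ gives $\ker_G M^{(\ell)}\cong\ker_G M\cong G^{m-r}$, and since $M^{(\ell)}$ has $m-1$ columns, the free dimension $m-r=(m-1)-\text{rank}(M^{(\ell)})$ forces $\text{rank}(M^{(\ell)})=r-1$. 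The object $M^{(\ell)}\in\Z^{r\times(m-1)}$ is therefore \emph{not} full rank, so $d_r(M^{(\ell)})=0$ and the ``$d_r=1\iff\ker_G\cong G^{m-r}$'' equivalence you lean on does not apply to it in the form you want. You do notice near the end that a row must also come out, but your patch (``delete a row that becomes a $\Z$-linear combination of the others'') is not justified: after deleting the column you only know some row is a $\Q$-linear combination of the others, and arbitrarily deleting a $\Q$-redundant row can change $\ker_G$ for groups $G$ with torsion.

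The step you are missing is to do the row manipulation \emph{first}, and to do it over $\Z$. The paper argues: since $\{e_\ell\}^\perp\supset\ker_\Q M$, one has $e_\ell\in(\ker_\Q M)^\perp=\Span_\Q\{M_1,\ldots,M_r\}$, and then — this is precisely where $d_r(M)=1$ enters, via the Smith normal form — the lattice identity $\Span_\Q\{M_1,\ldots,M_r\}\cap\Z^m=\Span_\Z\{M_1,\ldots,M_r\}$ holds, so $e_\ell=\sum n_i M_i$ with coprime integers $n_i$. By \cite[Lemma 9]{KSVGR} there is a unimodular $U\in\Z^{r\times r}$ with first row $(n_1,\ldots,n_r)$, so $M_0=UM$ has $\ker_G M_0=\ker_G M$ and first row equal to $e_\ell$. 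Now one deletes the first row \emph{and} column $\ell$ simultaneously: the resulting $M'\in\Z^{(r-1)\times(m-1)}$ is full rank, the embedding $\gamma_{[m]\setminus\{\ell\}}$ is an isomorphism onto $\ker_G M_0=\ker_G M$, and (since every $r\times r$ minor of $M_0$ using column $\ell$ reduces to a $(r-1)\times(r-1)$ minor of $M'$, while minors missing column $\ell$ vanish) the determinantal condition survives. Your terminal-case handling (either $0_G\notin A_j$ for some $j$, or all rows are consumed and the kernel is trivial, contradicting the density hypothesis) is morally right, but it needs to ride on the correct simultaneous row-and-column deletion, not on a column-only deletion followed by an unjustified row cleanup.
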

In other words, any element $x'\in\ker_G M'$ can be extended uniquely to an element $x\in \ker_{G}M$ by adding coordinates equal to $0_G$ with indices in $[m]\setminus C$ (i.e. corresponding with columns from $M$ missing in $M'$). As a consequence, if Theorem \ref{thm:cag-rem-lem+} holds for $M'$ then it holds for $M$. 
\begin{proof}
From Definition \ref{defn:plain} we have $p_\ell(\ker_\Q M)=\{0\}$ for some $\ell \in [m]$. We claim that then there exists a unimodular matrix $U\in \Z^{r\times r}$ such that the matrix $M_0=UM$ has first row equal\footnote{Incidentally, this claim also implies that in Definition \ref{defn:plain}, if  $d_r(M)=1$, then the case $G=\Q$ of the definition (i.e. $p_\ell(\ker_\Q M)=\{0\}$) implies the general case.} to the standard basis element $e_\ell$. To see this, note that $e_\ell$ must be in the row space over $\Q$ of the rows of $M$, that is $e_\ell \in \Span_\Q\{M_1,\ldots,M_r\}$. (Indeed, our assumption is that $\{e_\ell\}^\perp \supset \ker_\Q M$, so $e_\ell\in (\ker_\Q M)^\perp=\{M_1,\ldots,M_r\}^{\perp \perp}$.) Thus $e_\ell\in \Span_\Q\{M_1,\ldots,M_r\}\cap \Z^m$. But this set equals $\Span_\Z\{M_1,\ldots,M_r\}$ because $d_r(M)=1$, as can be seen using the Smith normal form $M= V (I_r|0) W$ (where $V\in \Z^{r\times r},W\in \Z^{m\times m}$ are unimodular, and $I_r$ denotes the identity matrix of order $r$). Thus we have $e_\ell= n_1 M_1+\cdots+n_r M_r$ where the $n_i$ are coprime integers. By \cite[Lemma 9]{KSVGR}, there exists a unimodular matrix $U\in \Z^{r\times r}$ with first row equal to $(n_1,n_2,\ldots, n_r)$. Thus we have $M_0=UM$ as claimed, and so $\ker_G M=\ker_G M_0$. Now, with the notation from Theorem \ref{thm:cag-rem-lem+}, if $0_G\notin A_\ell$, then Theorem \ref{thm:cag-rem-lem+} holds as there are no solutions $x$ with $x_i\in A_i$, for all $i\in[m]$. Otherwise, we remove the first row of  $M_0$ as well as the $\ell$-th column, obtaining a matrix $M'$. Note that the embedding homomorphism $\gamma_{[m]\setminus \{\ell\}}$  is a measure-preserving isomorphism $\ker_G M' \to \ker_G M$. If $M'$ is plain, we repeat the same procedure.\\
\indent This iteration must produce the desired matrix $M'$ before all the rows of $M_0$ are removed, for otherwise we would have that $0_{G^m}$ is the only solution and that $0_{G}\in A_j$ for each $j\in[m]$, which implies that $\mu_{\ker_G M}(\prod_j A_j \cap \ker_G M)=1$, contradicting the assumption in Theorem~\ref{thm:cag-rem-lem+}.
\end{proof}

Thus, our objective in this section is to prove the following result.

\begin{proposition}[Existence of a hypergraph representation]\label{prop:repexist}
Let $M\in \Z^{r\times m}$ with $d_r(M)=1$, and suppose that $M$ is not plain. Then there exist positive integers $t,k$ such that, for any abelian group $G$, the system $(M,G)$ has a $(t,m,k)$-representation.
\end{proposition}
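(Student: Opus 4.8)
The plan is to construct the homomorphism $\Psi$ of Definition \ref{defn:Cayley-rep} by hand, after first recasting its three conditions into tractable form. Since $d_r(M)=1$, the matrix $M$ is surjective over $\Z$, so $\ker_\Z M=\ker_\Q M\cap\Z^m$ is a direct summand of $\Z^m$; writing $L\in\Z^{m\times(m-r)}$ for the last $m-r$ columns of $W^{-1}$ in a Smith normal form $M=V(I_r\mid0)W$, the map $L$ is a split injection $\Z^{m-r}\hookrightarrow\Z^m$ with image $\ker_\Z M$, and applying $-\otimes G$ to the splitting $\Z^m\cong\ker_\Z M\oplus(\text{free})$ shows that $L\colon G^{m-r}\to G^m$ is injective with image $\ker_G M$ for every abelian group $G$ (this is exactly where $d_r(M)=1$ enters). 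Hence, taking $G_*=G$ with $\Psi$ an integer matrix, condition (ii) becomes the purely arithmetic statement that the columns of $\Psi$ include a generating set of the lattice $\ker_\Z M$ --- equivalently $M\Psi=0$, $\operatorname{rank}\Psi=m-r$ and $d_{m-r}(\Psi)=1$; extra columns whose image lies in $\ker_G M$ do no harm. Condition (i) is then cheap: once the rows $\psi_j$ have support bounded in terms of $M$, one enlarges $t$ by adjoining for each $j$ a private block of dummy (zero) columns, which lets us pad every $C_j\supseteq\supp\psi_j$ to size exactly $k$ and make the $C_j$ pairwise distinct without disturbing (ii) or (iii). So the real task is to produce $\Psi$ with short rows, columns generating $\ker_\Z M$, and condition (iii).

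Condition (iii) is the delicate one, and it forces over-parametrisation. The naive choice $\Psi=L$ fails it completely: then $\ker_G\Psi=\{0\}$, so (iii) would require each $\psi_{C_j}\colon G^{C_j}\to G$ to be injective, which is impossible once $|C_j|\ge2$. One must instead arrange that the ``gauge group'' $\ker_G\Psi$ is rich enough to project onto every $\ker_G\psi_{C_j}$ exactly. The guiding model is Schur's equation (Example \ref{ex:Schur}): each $x_j$ is written as a difference $y_a-y_b$ of auxiliary ``potential'' variables, the relations $Mx=0$ become formal telescoping identities, and $\ker_G\Psi$ is the diagonal, whose projection to any coordinate pair is exactly the corresponding diagonal $=\ker_G\psi_{C_j}$. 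Following this, I would first put $M$ into a normal form displaying $\ker_\Z M$ through a basis of controlled support (here non-plainness guarantees that no row of that basis vanishes identically, so that every color class is nondegenerate), and then realise each $x_j$ as a short integer combination of a bounded family of auxiliary free variables laid out in a potential-function/telescoping pattern generalising the Schur one, chosen so that $Mx=0$ holds identically, the columns of the resulting $\Psi$ still generate $\ker_\Z M$, and $\ker_G\Psi$ projects onto every $\ker_G\psi_{C_j}$.

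Beyond Schur's unit-coefficient situation there is one further phenomenon: $M$ may force a whole row $\psi_{C_j}$ to have all of its coefficients divisible by some integer $n>1$ (this occurs already whenever a basis vector of $\ker_\Z M$ has a non-unit entry), and then $\ker_G\psi_{C_j}$ picks up spurious $n$-torsion that the potential kernel $\ker_G\Psi$ cannot reproduce, so (iii) breaks. The remedy I envisage is, for each of the finitely many primes dividing the coefficients occurring in $M$, to adjoin one auxiliary coordinate adapted to $G$ --- concretely a column carrying the relevant $p$-torsion of $\ker_G M$, or where more convenient to take $G_*$ a suitable torsion-adapted cover of $G$ --- so that $p_{C_j}(\ker_G\Psi)$ regains precisely the torsion present in $\ker_G\psi_{C_j}$. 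Since only finitely many primes are involved and they depend only on $M$, this keeps $t$ and $k$ bounded in terms of $M$ alone, and one then verifies (iii) directly, color class by color class.

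The main obstacle is precisely this confluence: shortening the rows forces one to introduce auxiliary coordinates, condition (iii) forbids those coordinates from linking two different color classes, and the requirement that $t,k$ be uniform over all $G$ constrains how many torsion corrections one may afford. Making the potential-function layout coexist with the per-prime torsion corrections, and verifying (iii) for the final $\Psi$, is the technical heart of the argument.
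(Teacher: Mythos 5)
Your reformulation of the problem is accurate and closely tracks the paper's own strategy: condition (ii) for an integer matrix $\Psi$ over $G_*=G$ does reduce to the columns of $\Psi$ generating the lattice $\ker_\Z M$ (your Smith-normal-form/split-summand argument is exactly where $d_r(M)=1$ enters), padding with dummy columns does make condition (i) harmless, condition (iii) is indeed the crux, the Schur-type telescoping layout is the right model, and the divisibility of row coefficients is precisely the obstruction that forces one to leave the world of integer matrices over $G$. However, the proposal stops at the two points where the actual work lies, and both are genuine gaps rather than routine verifications. First, the existence of a ``potential-function/telescoping pattern generalising the Schur one, chosen so that \ldots $\ker_G\Psi$ projects onto every $\ker_G\psi_{C_j}$'' is asserted, not constructed. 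In the paper this is the content of Proposition \ref{prop:simplext} and Lemma \ref{lem:circrep}: one must extend a simple matrix $(I_r|B)$ to a \emph{circular} matrix (every $r$ cyclically consecutive columns unimodular), and producing that extension requires the Euclid's-algorithm/induction argument of Lemma \ref{lem:adapt11}; the unimodularity of all these consecutive blocks is exactly what makes condition (iii) verifiable for the cyclic hypergraph. Nothing in your sketch supplies this mechanism, and without it the claim that a suitable layout exists for an arbitrary $M$ with $d_r(M)=1$ is unsupported.

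Second, the torsion correction is underspecified to the point that it is not clear it can work as stated. Adjoining ``one auxiliary coordinate per prime dividing the coefficients'' does not obviously restore (iii) simultaneously for all colour classes: the paper's solution (Section \ref{section:FindRep}, Definition \ref{defn:genrep} and Lemmas \ref{lem:cond2final}--\ref{lem:cond3final}) takes $G_*=G_0\times G_1\times\cdots\times G_r$ with $G_i=\ker(\cdot\gcd(B_i))$, introduces $r{+}1$ full auxiliary \emph{simple} systems $M^{(i)}$, each carrying an extra free column, and glues their representations by a summation-then-multiplication map $K=P\circ\Sigma$; the verification of (iii) is a delicate bookkeeping argument (the element $x_{\mathrm{def}}$, which lies in $G_j$ precisely because $d_j\,G_j=\{0\}$) that exploits the free $(m{+}1)$-st coordinate of each $M^{(i)}$. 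One extra column per prime, without these duplicated systems and free coordinates, does not reproduce the required torsion in $p_{C_j}(\ker_{G_*}\Psi)$ for every $j$ at once. Two smaller points: your parenthetical diagnosis ``this occurs already whenever a basis vector of $\ker_\Z M$ has a non-unit entry'' is inaccurate --- the relevant quantity is the gcd of an entire row of $B$ in the normal form $(I_r|B)$ (equivalently the gcd of the $j$-th coordinates over all of $\ker_\Z M$), not a single entry of a single basis vector; and the degenerate case $m=r+1$, where the telescoping layout cannot even produce distinct colour classes without a further extension, is not addressed. You have correctly located the technical heart of the argument, but you have not carried it out.
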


The combination of this result with Proposition \ref{prop:keylink} (via Lemma \ref{lem:plaincase} if $M$ is plain) establishes Theorem \ref{thm:cag-rem-lem+}.

\begin{remark}\label{rem:coprime}
Note that if two matrices $M,M'\in \Z^{r\times m}$ satisfy $\ker_G M=\ker_G M'$, then a $(t,m,k)$-representation for $(M',G)$ is also a $(t,m,k)$-representation for $(M,G)$. In particular, for \emph{finite} abelian groups $G$, in Proposition \ref{prop:repexist} the assumption $d_r(M)=1$ can be relaxed to $\gcd(d_r(M),|G|)=1$ (and the same holds for the finite case of Theorem \ref{thm:cag-rem-lem}). Indeed, the Smith normal form of $M$ is then $U(D|0)V$ where $U\in \Z^{r\times r}, V\in \Z^{m\times m}$ are unimodular, and $D\in \Z^{r\times r}$ is a diagonal matrix with non-zero entries coprime  with $|G|$, so the endomorphism $D:G^r\to G^r$ is invertible. Therefore, letting $M'= (I_r|0)V$, we have that $\ker_G M=\ker_G M'$, whence $M$ has a $(t,m,k)$-representation if and only if $M'$ does.
\end{remark}

We shall prove Proposition \ref{prop:repexist} in several steps that constitute the subsections below. One of the main tools that we shall use is a notion of extension for integer matrices, which will enable us to replace the given matrix $M$ by a simpler one at each step of the argument. To define this notion of extension, we use the following notation. Given a set $J\subset [m']$ of size $m$, and a group $G$, recall that we denote by $p_J$ the coordinate projection $G^{m'}\to G^J$.  Instead of the image group $G^J$, we shall often want to work with the group $G^m$, isomorphic to $G^J$. To avoid a possibly confusing abuse of notation, we shall denote by $\pi_J$ the homomorphism $G^{m'}\to G^m$ that takes $(g_j)_{j\in [m']}$ to $(g_{\sigma_J(j)})_{j\in [m]}$, where $\sigma_J$ is the order-preserving bijection $[m]\to J$.

\begin{defn}[Matrix extension]\label{defn:sysext}
	Let $r'\geq r$, $m'\geq m$ and let $M\in \Z^{r\times m}, M'\in  \Z^{r' \times m'}$. We say that $M'$ is an \emph{extension} of $M$ if the following holds. There is a subset $J\subset [m']$ of size $m$ such that, for any abelian group $G$, the homomorphism $\pi_J:G^{m'}\to G^m$ restricts to an isomorphism $\ker_G M' \to \ker_G M$.
\end{defn}

Note that if $M$ and $M'$ both have full rank, then we must have $m'-r'=m-r$, since this is the dimension of their isomorphic kernels over $G=\Q$. The extensions that we shall consider will always be given by a matrix $M'$ having $M$ as a submatrix in such a way that $\pi_J$ has the required property.\\

\indent The key fact that makes extensions useful for us is that  they preserve the property of having a hypergraph representation, in the following sense.

\begin{lemma}\label{lem:key4ext}
Let $M\in \Z^{r\times m}$, and let $M'\in \Z^{r'\times m'}$ be an extension of $M$ with corresponding index set $J\in \binom{[m']}{m}$. Let $G$ be an abelian group, and suppose that $\Psi'$ is a $(t,m',k)$-representation for $(M',G)$. Then $\Psi:=\pi_J \circ \Psi' $ is a $(t,m,k)$-representation for $(M,G)$. Moreover if $\Psi'$ is given by an integer matrix, then so is $\Psi$.
\end{lemma}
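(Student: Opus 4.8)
The plan is to verify the three conditions of Definition \ref{defn:Cayley-rep} for $\Psi=\pi_J\circ\Psi'$, where $\Psi':G_*^t\to G^{m'}$ is the given $(t,m',k)$-representation of $(M',G)$. First I would unwind the matrix-of-homomorphisms structure of $\Psi$. Writing $\psi'_j$ for the rows of $\Psi'$ and $\sigma_J:[m]\to J$ for the order-preserving bijection from Definition \ref{defn:sysext}, the definition of $\pi_J$ gives immediately that the $j$-th row of $\Psi$ is $\psi_j=\psi'_{\sigma_J(j)}$; hence the $(j,k)$ entry is $\Psi_{j,k}=\Psi'_{\sigma_J(j),k}$. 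In particular $\supp\psi_j=\supp\psi'_{\sigma_J(j)}$, and since $\Psi'$ satisfies condition (i) of Definition \ref{defn:Cayley-rep} we have $\supp\psi'_{\sigma_J(j)}\subset C'_{\sigma_J(j)}$ for distinct sets $C'_1,\dots,C'_{m'}\in\binom{[t]}{k}$. So I would set $C_j:=C'_{\sigma_J(j)}$; these are $m$ distinct members of $\binom{[t]}{k}$ (distinctness is inherited since $\sigma_J$ is injective), giving condition (i) for $\Psi$. Note also that $\psi_{C_j}=\psi_j\circ\gamma_{C_j}=\psi'_{\sigma_J(j)}\circ\gamma_{C'_{\sigma_J(j)}}=\psi'_{C'_{\sigma_J(j)}}$, which will be convenient below.

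For condition (ii) I would use that $\pi_J$ restricts to an isomorphism $\ker_G M'\to\ker_G M$ (Definition \ref{defn:sysext}) together with $\Psi'(G_*^t)=\ker_G M'$ (condition (ii) for $\Psi'$). Then $\Psi(G_*^t)=\pi_J(\Psi'(G_*^t))=\pi_J(\ker_G M')=\ker_G M$. In the topological case one checks that $\pi_J$ is continuous and that $\ker_G M'$ is compact, so the restriction is a homeomorphism, hence the isomorphism is of compact groups and is automatically measure-preserving; and $\Psi=\pi_J\circ\Psi'$ is a composition of continuous maps.

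For condition (iii) I want $p_{C_j}(\ker_{G_*}\Psi)=\ker_{G_*}\psi_{C_j}$ for each $j\in[m]$. The crucial observation is that $\ker_{G_*}\Psi=\ker_{G_*}(\pi_J\circ\Psi')=\ker_{G_*}\Psi'$: indeed $\pi_J$ restricted to $\ker_G M'=\Psi'(G_*^t)$ is injective, so $\Psi'(g)=0$ iff $\pi_J(\Psi'(g))=0$. Hence $p_{C_j}(\ker_{G_*}\Psi)=p_{C_j}(\ker_{G_*}\Psi')=p_{C'_{\sigma_J(j)}}(\ker_{G_*}\Psi')$, and by condition (iii) for $\Psi'$ this equals $\ker_{G_*}\psi'_{C'_{\sigma_J(j)}}=\ker_{G_*}\psi_{C_j}$, using the identification from the first paragraph. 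That closes condition (iii). Finally, if $\Psi'$ is given by an integer matrix then each $\Psi'_{j,k}$ is multiplication by an integer; since $\Psi_{j,k}=\Psi'_{\sigma_J(j),k}$, the same holds for $\Psi$, so $\Psi$ is given by an integer matrix (the $m\times t$ integer matrix obtained by selecting the rows of the matrix of $\Psi'$ indexed by $J$ in increasing order).

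I do not anticipate a serious obstacle here; the lemma is essentially bookkeeping once one sees the two key facts, namely that $\pi_J$ intertwines the row structures via $\sigma_J$, and that $\pi_J$ being injective on $\Psi'(G_*^t)$ forces $\ker_{G_*}\Psi=\ker_{G_*}\Psi'$. The only point requiring a little care is the topological/measure-theoretic bookkeeping in conditions (ii) and (iii) — verifying that the relevant groups are compact second-countable and that the maps in sight are continuous, so that all isomorphisms produced are measure-preserving — but this is routine given that $G_*$, $G$ are assumed second-countable compact and $\Psi'$ continuous in that case.
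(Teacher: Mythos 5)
Your proof is correct and proceeds along essentially the same lines as the paper's, checking the three conditions of Definition~\ref{defn:Cayley-rep} after observing that $\pi_J$ simply selects the rows of $\Psi'$ indexed by $J$, so that $\psi_j=\psi'_{\sigma_J(j)}$ and $C_j=C'_{\sigma_J(j)}$. The one place you depart slightly is condition (iii): the paper only uses the trivial containment $\ker_{G_*}\Psi'\subset\ker_{G_*}\Psi$ and argues the two inclusions of $p_{C_j}(\ker_{G_*}\Psi)=\ker_{G_*}\psi_{C_j}$ separately, whereas you first upgrade the containment to an equality $\ker_{G_*}\Psi=\ker_{G_*}\Psi'$, using that $\pi_J$ is injective on $\Psi'(G_*^t)=\ker_G M'$, after which both inclusions fall out at once from condition (iii) for $\Psi'$. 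This is a clean minor optimization and entirely correct; it buys you a shorter verification without changing the substance.
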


\begin{proof}
First note that we can express the projection $\pi_J$ as left-multiplication by the $m\times m'$ integer matrix whose $j$-th row is  the vector with entry $\sigma_J(j)$ equal to 1 and all other entries 0, for each $j\in [m]$. Thus $\Psi$ is an $m\times t$ homomorphism matrix with $j$-th row equal to the $\sigma_J(j)$-th row of $\Psi'$, with support $C_j=C_{\sigma_J(j)}'$, where the latter is the support of the $\sigma_J(j)$-th row of $\Psi'$. In particular, the claim in the last sentence of the lemma is clear. Let us now check that the conditions of Definition \ref{defn:Cayley-rep} are satisfied.\\
\indent Condition (i) is inherited by $\Psi$ from $\Psi'$, since the $m$ rows of $\Psi$ form a subset of the $m'$ rows of $\Psi'$.\\
\indent Condition (ii) is also satisfied, indeed we have
\[
\Psi\big(G_*^{m'}\big)=\pi_J\big(\Psi'\big(G_*^{m'}\big)\big)=\pi_J(\ker_G M')=\ker_G M,
\]
where the last equality follows from Definition \ref{defn:sysext}.\\
\indent To check condition (iii), fix $j\in [m]$. Then, given $y\in  \ker_{G_*} \Psi\leq G_*^t$, we must have in particular the $j$-th coordinate of $\Psi(y)$ equal to 0, and this coordinate equals $\psi_{C_j}(p_{C_j}(y))$ by condition (i), whence $p_{C_j}(\ker_{G_*} \Psi)\subset \ker_{G_*} \psi_{C_j} $. To see the opposite containment, let $y'\in \ker_{G_*} \psi_{C_j}$. Since $C_j=C'_{\sigma_J(j)}$, we have $y'\in \ker_{G_*} \psi'_{C'_{\sigma_J(j)}}$, and since condition (iii) holds for $\Psi'$, there exists $y\in \ker_{G_*} \Psi'\subset \ker_{G_*} \Psi$ such that $p_{C'_{\sigma_J(j)}}(y)=p_{C_j}(y)=y'$, so we have indeed $\ker_{G_*} \psi_{C_j} \subset p_{C_j}(\ker_{G_*} \Psi)$.\\
\indent If $G,G_*$ are topological groups and $\Psi'$ is continuous, then so is $\Psi$.
\end{proof}

\subsection{A reduction to matrices of the form $(I_r|B)$}
Our first application of matrix extensions consists in showing that to establish Proposition \ref{prop:repexist} it suffices to prove it for matrices $M=(I_r|B)$. To that end we shall use the following result, the role of which is analogous to \cite[Lemma 10]{KSVGR}. 

\begin{lemma}\label{lem:firstext}
Let $M\in \Z^{r\times m}$ and suppose that $d_r(M)=1$ and that $M$ is not plain. Then $M$ has an extension $M'\in \Z^{m \times (2m-r)}$, with $J=[m]$ and $M'= U(I_m|B)$, where $U\in \Z^{m\times m}$ is unimodular and every row of $B$ is non-zero.
\end{lemma}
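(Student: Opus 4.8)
The plan is to build the extension $M'$ by first fixing a Smith-type normal form for $M$ and then ``absorbing'' the unimodular change of coordinates on the column side into extra columns. Since $d_r(M)=1$, write the Smith normal form $M = V(I_r\mid 0)W$ with $V\in\Z^{r\times r}$, $W\in\Z^{m\times m}$ unimodular. Put $\tilde M := V^{-1}M = (I_r\mid 0)W$; note $\ker_G\tilde M=\ker_G M$ for every abelian group $G$, so it suffices to produce the extension for $\tilde M$ (and absorb $V^{-1}$ into the final unimodular factor $U$). The matrix $\tilde M$ has the form $(P\mid Q)$ where $[P\mid Q]$ consists of the first $r$ rows of the unimodular matrix $W$; in particular $P\in\Z^{r\times r}$ need not be invertible, but the full $r\times m$ matrix has a right inverse over $\Z$.

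The key construction: I would adjoin $m-r$ new columns to record the ``free'' coordinates. Concretely, let $W'$ be the full unimodular matrix $W$, whose last $m-r$ rows I will call $W''\in\Z^{(m-r)\times m}$. Consider the block matrix
\[
M' \;=\; \begin{pmatrix} \tilde M & 0_{r\times(m-r)} \\ W'' & -I_{m-r}\end{pmatrix}\ \in\ \Z^{m\times(2m-r)},
\]
with the index set $J=[m]$ picking out the first $m$ columns. For any abelian group $G$, an element $(x,y)\in G^m\times G^{m-r}$ lies in $\ker_G M'$ iff $\tilde M x=0$ and $y=W''x$; since $y$ is then determined by $x$, the projection $\pi_J=\pi_{[m]}:(x,y)\mapsto x$ restricts to an isomorphism $\ker_G M'\to\ker_G\tilde M=\ker_G M$, so $M'$ is an extension of $M$ with $J=[m]$. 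Now I claim the full $m\times(2m-r)$ matrix $M'$ is (up to a unimodular left factor) of the form $(I_m\mid B)$: reading the columns, the first $m$ columns of $M'$ stacked over $W''$ together with the ``$-I$'' block form, after suitable row operations, an invertible $m\times m$ matrix — indeed $\begin{pmatrix}\tilde M\\ W''\end{pmatrix}$ is $(I_r\mid 0)W$ stacked over the bottom $m-r$ rows of $W$, which is $\begin{pmatrix}I_r&0\\ *&*\end{pmatrix}$ composed with $W$; more cleanly, the matrix $\begin{pmatrix}(I_r\mid 0)\\ \text{(last }m-r\text{ rows of }W)\end{pmatrix}$ equals $\begin{pmatrix}(I_r\mid 0)W\\ W''\end{pmatrix}$... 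I would instead choose the new-column block and row ordering so that the first $m$ columns of $M'$ form precisely a unimodular matrix $U\in\Z^{m\times m}$, at which point $M' = U(I_m\mid U^{-1}B_0)$ for the remaining column block $B_0$; set $B:=U^{-1}B_0$. Finally I must arrange that every row of $B$ is non-zero: a row of $B$ is zero exactly when the corresponding coordinate of $\ker_G M$ is forced to be $0_G$ for all $G$, i.e. that coordinate witnesses $M$ being plain; since $M$ is \emph{not} plain by hypothesis, after possibly a harmless permutation and a renaming of the free coordinates, no row of $B$ vanishes. (If a few of the $m-r$ adjoined coordinates happen to give zero rows, one can simply drop them or re-choose the complement basis inside $\ker W$ so they don't.)

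The main obstacle I anticipate is the bookkeeping in the middle step: getting the first $m$ columns of the extended matrix to form an honest unimodular block $U$ while simultaneously keeping the extension property (isomorphism of kernels via $\pi_{[m]}$) and controlling which rows of the trailing block $B$ are zero. All three requirements must be made compatible by a careful choice of how the $m-r$ ``free'' coordinates are encoded — essentially choosing a $\Z$-basis of $\ker W\cap\Z^m$ adapted to the non-plainness of $M$ — and I would model this on the argument for \cite[Lemma 10]{KSVGR}, transcribing it to the present matrix-extension language. The non-plainness hypothesis enters precisely to guarantee the "every row of $B$ is non-zero" conclusion, and I expect its use to be the one genuinely non-formal point; everything else is linear algebra over $\Z$ together with the observation that $\pi_J$ is measure-preserving because it is a continuous surjective homomorphism between compact groups with finite kernel-complement, hence the isomorphism in Definition~\ref{defn:sysext} is automatically measure-preserving.
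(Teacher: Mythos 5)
Your construction is essentially the paper's: complete a kernel-equivalent of $M$ to a unimodular $m\times m$ block $U$, adjoin the columns $\begin{pmatrix}0\\ \pm I_{m-r}\end{pmatrix}$ to form $M'=(U\mid B_0)\in\Z^{m\times(2m-r)}$, verify $\pi_{[m]}$ is a kernel isomorphism, and set $B:=U^{-1}B_0$, concluding that a zero row $B_j$ would force $p_j(\ker_G M)=\{0_G\}$ and contradict non-plainness. The paper gets its unimodular completion in one line by citing \cite[Lemma 9]{KSVGR} applied directly to $M$; you route through the Smith normal form $M=V(I_r\mid 0)W$, pass to $\tilde M=V^{-1}M$, and take $U=W$ — a valid but longer path to the same block structure.

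Two places in your writeup are muddled, even though the underlying plan is sound. First, you never cleanly state the fact that makes the middle step trivial: since $\tilde M=(I_r\mid 0)W$ is the top $r$ rows of $W$ and $W''$ is the bottom $m-r$ rows, the first $m$ columns of your $M'$ are $\begin{pmatrix}\tilde M\\ W''\end{pmatrix}=W$, which is unimodular on the nose; your displayed ``$\begin{pmatrix}(I_r\mid 0)\\ W''\end{pmatrix}=\begin{pmatrix}(I_r\mid 0)W\\ W''\end{pmatrix}$'' is false, and the subsequent ``I would instead choose the new-column block and row ordering so that\ldots'' is hand-waving to paper over a confusion that disappears once you write $\begin{pmatrix}\tilde M\\ W''\end{pmatrix}=W$. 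Second, the closing parenthetical about ``dropping'' adjoined coordinates or ``re-choosing the complement basis inside $\ker W$'' is both unnecessary and wrong: $W$ is unimodular so $\ker W$ is trivial, dropping columns would change the required shape $m\times(2m-r)$, and no repair is needed — the contrapositive ``$B_j=0\Rightarrow p_j(\ker_G M)=\{0_G\}$ for all $G\Rightarrow M$ plain'' already rules out zero rows outright, exactly as in the paper.
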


\begin{proof}
By \cite[Lemma 9]{KSVGR} there exists an $m\times m$ matrix 
$U=\left(\begin{array}{c} M \\ E \\ \end{array} \right)$ 
satisfying $\det (U)=d_r(M)=1$. We set $M'$ to be the following $m\times (2m-r)$ matrix:
\[
M'=
\begin{pmatrix}
M & 0 \\
E & I_{m-r}\\
\end{pmatrix}=(U|B_0).
\]
This is an extension of $M$ with $J=[m]$. Letting $B=U^{-1} B_0$, we have $U^{-1} M'= ( I_m | B)$ as required.\\
\indent Since $U$ is unimodular, we have $\ker_G M'=\ker_G (I_m|B)$ for any abelian group $G$. Therefore, if for some $j\in [m]$ the row $B_j$ of $B$ is 0, then
\[
p_j(\ker_G M')=p_j(\ker_G (I_m|B))=\{0_G\}.
\]
Then, since $\pi_J:\ker_G M'\to \ker_G M$ is an isomorphism, we must also have $p_j(\ker_G M)=\{0_G\}$, whence $M$ is plain.
\end{proof}

We can now reduce the proof of Proposition \ref{prop:repexist} to establishing the following result.

\begin{proposition}\label{prop:diagformrep}
Suppose that $M\in \Z^{r\times m}$ is of the form $M=(I_r|B)$, where $m\geq r+1$ and all rows of $B$ are non-zero. Then there exist positive integers $t,k$ such that, for any abelian group $G$, the system $(M,G)$ has a $(t,m,k)$-representation.
\end{proposition}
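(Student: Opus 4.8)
\textbf{Proof proposal for Proposition \ref{prop:diagformrep}.}

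The plan is to build the representation $\Psi$ directly from the matrix $M=(I_r|B)$, using the row structure of a suitably chosen auxiliary matrix, exactly as in the Schur example (Example \ref{ex:Schur}) and as suggested by the shape of $\Psi$ in \eqref{eq:SchurPsi}. Write $B=(b_{ij})$ with $i\in[r]$, $j\in[m-r]$, and think of a solution $x\in\ker_G M$ as determined by its free coordinates $x_{r+1},\ldots,x_m$ together with the constraint $x_i=-\sum_{j}b_{ij}x_{r+j}$ for $i\in[r]$. The idea is to introduce one vertex class for each free coordinate (that is, $m-r$ of them), possibly together with a small number of extra ``bookkeeping'' vertex classes, and to realize each of the $m$ coordinate maps $x\mapsto x_\ell$ as a row $\psi_\ell$ of a homomorphism matrix $\Psi:G_*^t\to G^m$ whose support $C_\ell$ has size exactly $k$ for a single common value of $k$. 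For the free coordinates the natural choice is $\psi_{r+j}=p_{\text{(class of }j)}$, a single-coordinate projection; for the dependent coordinates $\psi_i=-\sum_j b_{ij}p_{\text{(class of }j)}$, a weighted sum over several classes. The first obstacle is uniformity of support size: different rows of $B$ have different numbers of nonzero entries, and a single-coordinate projection has support size $1$, not $k$. This is precisely the kind of discrepancy that matrix extensions (Definition \ref{defn:sysext}, Lemma \ref{lem:key4ext}) are designed to absorb: by the hypothesis that every row of $B$ is non-zero, each $\psi_i$ genuinely depends on at least one free coordinate, and I would pad each row to a common support size $k$ by introducing auxiliary vertex classes carrying the value $0_G$ (equivalently, extending $M$ by extra columns that are forced to be zero, as in the plain-case reduction of Lemma \ref{lem:plaincase}), so that after the extension one may assume $|C_\ell|=k$ for all $\ell$ with $k$ depending only on $m$ and the sparsity pattern of $B$.

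Next I would verify the three conditions of Definition \ref{defn:Cayley-rep} for the resulting $\Psi:G_*^t\to G^m$, where $G_*=G$ (or a finite power of $G$, depending on how the padding is organized), and $t=m-r$ plus the number of auxiliary classes. Condition (i) holds by construction since we have arranged $\supp\psi_\ell=C_\ell$ of size $k$, and we must additionally check the $C_\ell$ are \emph{distinct}; distinctness can be ensured at the cost of a few more auxiliary classes, by a standard trick of tagging each row with a private ``marker'' class (so that no two supports coincide). Condition (ii), that $\Psi(G_*^t)=\ker_G M$, is the heart of the matter: the image is obviously contained in $\ker_G M$ because the rows $\psi_i$ are defined precisely so that $\sum_{\ell}(M)_{i\ell}\psi_\ell=0$ identically; conversely, given $x\in\ker_G M$, the free coordinates $(x_{r+1},\ldots,x_m)$, together with $0_G$ in the auxiliary classes, form a preimage, so $\Psi$ is surjective onto $\ker_G M$. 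Condition (iii), that $p_{C_\ell}(\ker_{G_*}\Psi)=\ker_{G_*}\psi_{C_\ell}$ for every $\ell$, is the subtle one: the inclusion $\subseteq$ is automatic (as in the proof of Lemma \ref{lem:key4ext}), and for $\supseteq$ one must show that any local solution of the single equation $\psi_{C_\ell}(y')=0$ on $G_*^{C_\ell}$ extends to a global element of $\ker_{G_*}\Psi$. Here the point is that for the free-coordinate rows $\psi_{r+j}$ is a coordinate projection, so its kernel on that one class is all of $G_*$ and extends trivially by setting the other free coordinates arbitrarily (say $0_G$) and the auxiliary classes to $0_G$; for the dependent-coordinate rows $\psi_i=-\sum_j b_{ij}p_{(j)}+(\text{markers})$, given $y'$ with $\psi_{C_i}(y')=0$ one extends by keeping the free coordinates appearing in $C_i$ equal to $y'$, setting all other free coordinates to $0_G$, and then there is \emph{no} remaining constraint because the marker/auxiliary classes only appear in the single row $\psi_i$. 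In other words, the representation should be arranged so that each class other than the $m-r$ free ones appears in at most one row's support; then condition (iii) is immediate for every row.

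\textbf{Main obstacle.} I expect the genuine difficulty to lie in reconciling three competing demands simultaneously: (a) every support $C_\ell$ has the same size $k$; (b) the supports are pairwise distinct; and (c) every auxiliary vertex class occurs in at most one support, so that condition (iii) comes for free. Demands (a) and (c) pull in opposite directions—padding a sparse row to reach size $k$ wants to reuse a common ``zero'' class, but (c) forbids sharing—so the padding must use \emph{fresh} zero-classes for each row, which is fine but makes $t$ grow; one then checks $t$ still depends only on $M$ (indeed $t\le m-r+ k\cdot m$ or so suffices). The cleanest way to organize all of this is probably to first prove the proposition \emph{without} the uniform-uniformity requirement (allowing each $C_\ell$ its natural size, i.e. a $(t,m,k_\ell)$-style representation), and then invoke a padding lemma—an instance of Lemma \ref{lem:key4ext} applied to the extension of $M$ by columns of zeros—to upgrade to a genuine $(t,m,k)$-representation with common $k=\max_\ell|C_\ell|$ (or $k=2$ if one instead wants to mimic the Schur picture by \emph{splitting} large equations into chains of $3$-term relations, introducing intermediate-sum vertex classes $x_i=y+z$, $y=\ldots$, which is the route actually taken in \cite{R&S} and is likely what the paper does to keep $k$ bounded independently of the density of $B$). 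In either realization, once the combinatorial bookkeeping is set up correctly, conditions (i)–(iii) are verified by the elementary arguments sketched above, and continuity and second countability of $G_*$ (when $G$ is second-countable compact) are inherited since $G_*$ is a finite power of $G$ and $\Psi$ is given by an integer matrix.
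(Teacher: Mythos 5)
There is a genuine gap, and it sits exactly where you flag condition (iii) as "the subtle one": for your proposed $\Psi$, condition (iii) fails. If the rows indexed by the free coordinates are single\nobreakdash-coordinate projections $\psi_{r+j}=p_{(j)}$, then $g\in\ker_{G_*}\Psi$ forces $g_{(j)}=0_{G}$ for \emph{every} free class $j$, so $\ker_{G_*}\Psi$ is trivial on the free classes and $p_{C_i}(\ker_{G_*}\Psi)$ is (up to the inert marker classes) the zero subgroup. By contrast, for a dependent row $\psi_i=-\sum_j b_{ij}p_{(j)}$ the local kernel $\ker_{G_*}\psi_{C_i}$ is a large subgroup of $G_*^{C_i}$ (e.g.\ for $B_i=(1,-1)$ it contains the whole diagonal), so $p_{C_i}(\ker_{G_*}\Psi)\subsetneq\ker_{G_*}\psi_{C_i}$. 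Your argument that "(iii) is immediate because each auxiliary class appears in only one row" misses the point: the obstruction comes from the \emph{free} classes, which necessarily appear both in their own projection row and in the dependent rows. (The same problem kills (iii$'$): extending $g'$ with $\psi_{C_i}(g')=x_i$ to $g$ with $\Psi(g)=x$ would force $g_{(j)}=x_{r+j}$ on the free classes, contradicting $p_{C_i}(g)=g'$ in general.) Condition (iii) is not cosmetic — it is what lets the $\cG_{C_j}$-invariant removal sets $S_j$ descend to small subsets $R_j$ of $G^{(j)}$, and it encodes the requirement that every edge lie in many copies of $F$; a representation with trivial kernel cannot do this. The paper's construction is designed precisely to make $\ker_{G_*}\Psi$ large and to project onto each $\ker_{G_*}\psi_{C_j}$: every row of $\Psi$ is a genuine linear combination over $r+1$ classes, obtained by extending $M$ to a \emph{circular} matrix (Definition \ref{def:circmatrix}, Proposition \ref{prop:simplext}, Lemma \ref{lem:circrep}).

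A second, independent omission is that your proposal never engages with the case $\gcd(B_i)>1$ for some row, which is the principal difficulty of this proposition and the reason the finite argument of \cite{KSVGR} does not transfer. The circular-extension machinery (Lemma \ref{lem:adapt11}) needs the entries of each row of $B$ to be coprime, and when they are not, no integer-matrix representation over $G_*=G$ suffices for a general compact $G$ (one cannot "divide by $s$" without distorting measure, e.g.\ on the circle). The paper handles this by taking $G_*$ to be a product $G_0\times G_1\times\cdots\times G_r$ of torsion subgroups of $G$, representing a family of associated \emph{simple} systems $M^{(i)}$ separately, and gluing them with the maps $K=P\circ\Sigma$ (Definition \ref{defn:genrep}, Proposition \ref{prop:genrep}); verifying (ii) and (iii) for the glued map is the technical heart of Section \ref{section:FindRep}. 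Your closing suggestion of splitting equations into chains of short relations is closer in spirit to a workable construction, but it is not developed, and it too would run into both the kernel-projection issue and the $\gcd$ issue.
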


\begin{lemma}
Proposition \ref{prop:diagformrep} implies Proposition \ref{prop:repexist}.
\end{lemma}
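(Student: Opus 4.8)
The plan is to chain together the two matrix-extension tools already established, Lemma \ref{lem:firstext} and Lemma \ref{lem:key4ext}, together with the unimodular-invariance observation of Remark \ref{rem:coprime}. Proposition \ref{prop:diagformrep} disposes of matrices in the ``diagonal form'' $(I_r|B)$, and the point is that every non-plain matrix of determinantal $1$ can be replaced, up to passing to an extension and up to left-multiplication by a unimodular matrix, by such a diagonal-form matrix without changing the solution set or losing the representation property.

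Concretely, I would proceed as follows. Let $M\in\Z^{r\times m}$ have $d_r(M)=1$ and not be plain. First I would note that $m\geq r+1$: since $d_r(M)=1$ forces $M$ to have rank $r$, the case $m=r$ would give $\abs{\det M}=1$, hence $\ker_G M=\{0_{G^m}\}$ for every abelian group $G$, making $M$ plain (this is the square-matrix example recorded after Definition \ref{defn:plain}). Next I would apply Lemma \ref{lem:firstext} to obtain an extension $M'\in\Z^{m\times(2m-r)}$ of $M$ with index set $J=[m]$, of the form $M'=U(I_m|B)$ with $U\in\Z^{m\times m}$ unimodular and every row of $B$ non-zero. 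Setting $N:=(I_m|B)$, I would observe that $m\geq r+1$ gives $2m-r\geq m+1$, so $N$ satisfies the hypotheses of Proposition \ref{prop:diagformrep} (with $m$ and $2m-r$ now playing the roles of $r$ and $m$). That proposition then supplies positive integers $t,k$, depending only on $N$ and hence only on $M$, and, for each abelian group $G$, a $(t,2m-r,k)$-representation $\Psi'$ of $(N,G)$ that is continuous when $G$ is second-countable compact.

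To conclude, I would transport $\Psi'$ back to $M$ in two steps. Since $U$ is unimodular we have $\ker_G M'=\ker_G N$, so by Remark \ref{rem:coprime} the homomorphism $\Psi'$ is also a $(t,2m-r,k)$-representation of $(M',G)$. Then, because $M'$ is an extension of $M$ with index set $J=[m]\in\binom{[2m-r]}{m}$, Lemma \ref{lem:key4ext} yields that $\Psi:=\pi_{[m]}\circ\Psi'$ is a $(t,m,k)$-representation of $(M,G)$, its continuity (when $G$ is second-countable compact) being guaranteed by the last clause of that lemma. As $t,k$ were chosen uniformly in $G$, this is precisely the assertion of Proposition \ref{prop:repexist}.

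I do not anticipate a genuine obstacle here: this is essentially a bookkeeping argument assembling results already proved above. The only things demanding a little care are verifying the dimension inequality $2m-r\geq m+1$ so that Proposition \ref{prop:diagformrep} actually applies, checking that the index set $J=[m]$ produced by Lemma \ref{lem:firstext} is of the form required by Lemma \ref{lem:key4ext}, and confirming that the continuity clause in the definition of a hypergraph representation is inherited at each stage --- which it is, since $\pi_{[m]}$ is a continuous homomorphism given by an integer matrix and Lemma \ref{lem:key4ext} records exactly this. The real substance of Proposition \ref{prop:repexist} has been deliberately deferred to Proposition \ref{prop:diagformrep}, whose proof occupies the remaining subsections.
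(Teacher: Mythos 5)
Your proof is correct and is essentially identical to the paper's: both begin by observing that non-plainness forces $m\geq r+1$, apply Lemma \ref{lem:firstext} to pass to the extension $M'=U(I_m|B)$ with every row of $B$ non-zero, invoke Proposition \ref{prop:diagformrep} on $(I_m|B)$, note that unimodularity of $U$ makes this a representation of $M'$ as well (you route this through Remark \ref{rem:coprime}, the paper cites Definition \ref{defn:Cayley-rep} directly, but it is the same observation), and finally apply Lemma \ref{lem:key4ext} with $J=[m]$ to transport the representation back to $M$. The dimension check $2m-r\geq m+1$ and the continuity bookkeeping you flag are exactly the points the paper also records, so there is nothing genuinely different in route or substance.
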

\begin{proof}
Suppose that $M_0\in \Z^{r_0\times m_0}$ satisfies the assumptions in Proposition \ref{prop:repexist}. We have $m_0\geq r_0+1$, otherwise $M_0$ is plain. Applying Lemma \ref{lem:firstext}, we obtain a matrix $M'=U(I_r|B)\in \Z^{r\times m}$, with $r=m_0$, $m=2m_0-r_0\geq r+1$, and all rows of $B$ non-zero, such that $M'$ is an extension of $M_0$ with $J=[m_0]$. Let $M=(I_r|B)$, let $t,k$ be the integers given by Proposition \ref{prop:diagformrep}, and let $G$ be an abelian group. By Proposition \ref{prop:diagformrep} there is a $(t,m,k)$-representation $\Psi$ for $(M,G)$. Then, since $U$ is unimodular, we have $\ker_G M' = \ker_G M$, and so $\Psi$ is also a $(t,m,k)$-representation for $(M',G)$, just by Definition \ref{defn:Cayley-rep}.  Hence, by Lemma \ref{lem:key4ext}, the map $\pi_{[m_0]}\circ \Psi$ is a $(t,m_0,k)$-representation for the original system $(M_0,G)$.
\end{proof}

Our goal now is to prove Proposition \ref{prop:diagformrep}. To begin with, in the next subsection we deal with a special case consisting of what we call \emph{simple} matrices.

\subsection{Simple matrices}

Given a non-zero element $v\in \Z^m$, we denote by $\gcd(v)$ the greatest common divisor of the integers $v(i),i\in [m]$.

\begin{defn}
We say that a matrix $M \in \Z^{r\times m}$ of the form $(I_r|B)$ is \emph{simple} if $m\geq r+2$, and for each $i\in [r]$ the $i$-th row of $B$, denoted $B_i$, is non-zero and satisfies $\gcd(B_i)=1$.
\end{defn}
Our main result concerning these matrices is the following.

\begin{proposition}\label{prop:simplesystems}
Suppose that $M=(I_r|B)\in \Z^{r\times m}$ is simple. Then, for some positive integers $t,k$, there exists $\Psi\in \Z^{m\times t}$ such that, for any abelian group $G$, the homomorphism $\Psi:G^t\to G^m$ is a $(t,m,k)$-representation for the system $(M,G)$.
\end{proposition}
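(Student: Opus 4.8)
The plan is to construct the representation $\Psi$ explicitly, mimicking and generalizing the Schur example (Example~\ref{ex:Schur}), where the matrix $\Psi$ there was built so that each of its rows realizes one equation and the columns are tied together by an auxiliary group of "intermediate" variables. Concretely, for the simple matrix $M=(I_r\,|\,B)$ with rows $B_i=(b_{i,1},\dots,b_{i,m-r})$ satisfying $\gcd(B_i)=1$, each solution of $Mx=0$ has the form $x_i=-\sum_{\ell=1}^{m-r} b_{i,\ell}\, x_{r+\ell}$ for $i\in[r]$, with $x_{r+1},\dots,x_m$ free. The idea is to introduce, for each row $i\in[r]$, a collection of auxiliary coordinates that "compute" the linear form $-\sum_\ell b_{i,\ell}x_{r+\ell}$ step by step, so that the $i$-th output coordinate $x_i$ is expressed as a \emph{difference of two of these auxiliary coordinates} (this is exactly the role of the two-element color classes $C_j$: each $\psi_{C_j}$ is supported on two coordinates of $G_*^t$). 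The free coordinates $x_{r+1},\dots,x_m$ are themselves among the coordinates of $G_*^t$, possibly each repeated/linked appropriately. This produces a matrix $\Psi\in\Z^{m\times t}$ with the property that $\supp\psi_j$ has size $k=2$ for every $j$; so the hypergraph will be a $2$-uniform one (a graph), consistent with the $(t,m,2)$-graph language, and $t$ depends only on $m,r$ and a bound on the entries of $B$ — but since $\gcd(B_i)=1$ we can in fact arrange $t$ depending only on $m$ and $r$, or at worst on $M$ itself, which is all the proposition requires.

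The three conditions of Definition~\ref{defn:Cayley-rep} would then be verified in turn. Condition (i) is immediate from the construction, once we have laid out the support of each row. Condition (ii), namely $\Psi(G_*^t)=\ker_G M$, amounts to checking that as the auxiliary and free coordinates range over $G_*^t$, the outputs $(x_1,\dots,x_m)$ range over \emph{exactly} the solution set: surjectivity onto $\ker_G M$ is clear because we can set the auxiliary coordinates to their "intended" values given any choice of free coordinates, and the image lies in $\ker_G M$ by design because each $x_i$ telescopes to $-\sum_\ell b_{i,\ell}x_{r+\ell}$. Condition (iii), $p_{C_j}(\ker_{G_*}\Psi)=\ker_{G_*}\psi_{C_j}$, is the subtle one: the inclusion $\subseteq$ is automatic (as in the proof of Lemma~\ref{lem:key4ext}), and for $\supseteq$ one must check that any pair of values at the two coordinates of $C_j$ that is killed by $\psi_{C_j}$ (i.e. the two values are equal, since $\psi_{C_j}$ is a difference) can be completed to a full element of $\ker_{G_*}\Psi$. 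This is where the structure of the "computation chains" matters, and where $\gcd(B_i)=1$ and the hypothesis $m\ge r+2$ (giving enough room / at least two free coordinates) will be used: one needs that after fixing the two coordinates in $C_j$ there is enough freedom in the remaining auxiliary and free coordinates to zero out \emph{all} the output coordinates $x_1,\dots,x_m$ simultaneously. Over a general abelian group $G$ (hence general $G_*$, which we will take to be a suitable direct power of $G$), divisibility obstructions could in principle arise, and the condition $\gcd(B_i)=1$ is precisely what is needed so that each linear form $B_i$ can be written as a combination realizable by difference-chains without requiring division in $G$.

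I expect the main obstacle to be the explicit bookkeeping in condition (iii): designing the intermediate-coordinate chains so that, for \emph{every} color class $C_j$ (both those corresponding to "final output" edges and those corresponding to internal links of a chain), the local equation $\psi_{C_j}=0$ extends to a global kernel element. A clean way to organize this is to use a gadget of the Ruzsa–Szemerédi type for each row $B_i$: realize the integer combination $\sum_\ell b_{i,\ell}x_{r+\ell}$ by a path of partial sums, where consecutive partial sums differ by $b_{i,\ell}x_{r+\ell}$ — but since $\psi_{C_j}$ must be a difference of single coordinates (with coefficient $\pm1$) rather than carrying a coefficient $b_{i,\ell}$, one further expands each step $b_{i,\ell}x_{r+\ell}$ into $|b_{i,\ell}|$ unit steps, each equal to $\pm x_{r+\ell}$, re-using the free coordinate $x_{r+\ell}$. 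This keeps all $\Psi$ entries in $\{-1,0,1\}$, makes $t$ a function of $\sum_{i,\ell}|b_{i,\ell}|$ (hence of $M$), and makes the extension-of-local-solutions argument in (iii) transparent, since along a unit-step chain any prescribed equal pair of consecutive values is trivially completable. Once the graph is built this way, the verification of all three conditions is routine, and Proposition~\ref{prop:simplesystems} follows by taking $G_*=G$ and $\Psi$ this integer matrix (noting that when $G$ is second-countable compact, $G_*=G$ is too and $\Psi$, being an integer matrix, is continuous).
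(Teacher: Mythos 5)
Your plan commits to uniformity $k=2$: every row $\psi_j$ of $\Psi$ is to be (essentially) a difference of two coordinates, so that the associated hypergraph is a graph. That works for a single equation ($r=1$: it is the Ruzsa--Szemer\'edi / Kr\'al'--Serra--Vena cycle construction), but it is impossible for simple matrices with $r\geq 2$, which Proposition \ref{prop:simplesystems} must cover. Concretely, take $M=(I_2|B)\in\Z^{2\times 4}$ where $B$ has rows $(1,1)$ and $(1,2)$ (a simple matrix, essentially the $4$-AP system). Conditions (ii) and (iii) force $M\Psi=0$ and $\mathrm{rank}_{\Q}\,\Psi=m-r=2$, so the four rows satisfy $\psi_1=-\psi_3-\psi_4$ and $\psi_2=-\psi_3-2\psi_4$ and span a $2$-dimensional space $W$; moreover no row is zero and no two rows are proportional, since $\psi_a=\lambda\psi_b$ would put $e_a-\lambda e_b$ in the row space of $M$, which contains no nonzero vector supported on at most two coordinates. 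Now let $S=\supp\psi_3\cup\supp\psi_4$. If the two supports are disjoint $2$-sets, $\psi_1$ has four nonzero entries. If $|S|=3$, the vectors of the plane $W\subset\Q^S$ having a zero coordinate lie on at most three lines, so two of the four rows are proportional. If $|S|\leq 2$, at most one row can have support of size $2$ (the $C_j$ are distinct $2$-sets), and among three rows that are multiples of single coordinates two are again proportional. So no such $\Psi$ exists; the uniformity must grow with the number of equations, which is why the paper takes $k=r'+1$.

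A second, independent problem is that your ``computation chain'' gadget imposes no constraints: Definition \ref{defn:Cayley-rep} has exactly $m$ color classes, one per row of $\Psi:G_*^t\to G^m$, so the ``internal links'' of a chain are not edges of the hypergraph and nothing forces the intermediate coordinates to equal the intended partial sums. The telescoping you invoke for condition (ii) therefore fails, and the image of $\Psi$ is strictly larger than $\ker_G M$. (In the single-equation cycle this is avoided because the one required relation $\sum_j\psi_j=0$ holds identically; with $r\geq 2$ independent relations and $2$-sparse rows it cannot be arranged, by the computation above.) The paper's route is different: it first extends the simple matrix to a \emph{circular} one, in which every $r'$ consecutive columns (cyclically) form a unimodular block --- this is the content of Lemma \ref{lem:adapt11} and Proposition \ref{prop:simplext}, using a Euclid's-algorithm row-insertion that is where $\gcd(B_i)=1$ enters --- and then uses the cyclic $(r'+1)$-uniform representation of Lemma \ref{lem:circrep}, whose condition (iii) is verified precisely via the unimodularity of consecutive blocks; the representation is finally pulled back to $M$ by Lemma \ref{lem:key4ext}.
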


Given $M\in \Z^{r\times m}$, for each $j\in [m]$  let $M_{(j)}$ denote the square matrix formed by the columns of $M$ with indices $j-r\mod m,j-(r-1)\mod m,\ldots, j-1\mod m$.\\
\indent The main part of the proof of Proposition \ref{prop:simplesystems} consists in showing that any simple matrix has an extension with the particularly convenient property of being what we call a circular matrix.

\begin{defn}[Circular matrix]\label{def:circmatrix}
We say that a matrix $M\in \Z^{r\times m}$ is \emph{circular} if for each $j\in [m]$ the matrix $M_{(j)}$ is unimodular.
\end{defn}

\begin{proposition}\label{prop:simplext} Let $M\in \Z^{r\times m}$ be a simple matrix. Then there is an extension $M' \in \Z^{r'\times m'}$ such that $M'$ is circular.
\end{proposition}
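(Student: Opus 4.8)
The plan is to build the extension $M'$ one new column at a time, each time "curing" one violated unimodularity condition while preserving all the ones already established, so that after finitely many steps every window matrix $M'_{(j)}$ is unimodular. First I would fix the target: for a matrix in $\Z^{r\times m}$ the circularity condition is a statement about the $m$ cyclic windows of $r$ consecutive columns. Since $M=(I_r|B)$ is simple, several of these windows are already unimodular for free — in particular the window consisting of the $r$ columns of $I_r$ is the identity, so $\det M_{(j)}=\pm 1$ for the index $j$ where that window sits, and more generally any window lying entirely inside the $I_r$ block, or overlapping it in a triangular way, will have determinant $\pm1$. The genuinely problematic windows are those that sit inside the $B$-block. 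The idea is to enlarge the matrix so that between any two "bad" configurations we insert enough identity-like columns that the relevant windows become unimodular, exploiting the $\gcd(B_i)=1$ hypothesis.

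The key technical device I would use is the following local move, which is the analogue of \cite[Lemma 9]{KSVGR} in this cyclic setting: given a column $c\in\Z^r$ and $r-1$ columns $c_1,\ldots,c_{r-1}\in\Z^r$ spanning a primitive (i.e. saturated) sublattice together with some coprimality condition, one can append a new column so that the new $r\times r$ window has determinant $\pm1$. Concretely, to fix a window $M_{(j)}$ one replaces $M$ by an extension $\widetilde M$ in which a carefully chosen integer column has been spliced in; the splicing is arranged (via a block structure like the one in the proof of Lemma \ref{lem:firstext}, padding with identity blocks in new rows and columns) so that $\pi_J$ restricts to an isomorphism of kernels, i.e. so that $\widetilde M$ is genuinely an extension of $M$ in the sense of Definition \ref{defn:sysext}. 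One checks that inserting such a column does not destroy the unimodularity of windows that were already unimodular, because those windows either do not contain the new column or, if they do, the new column was chosen to have a $1$ in a coordinate that makes the determinant expansion go through — this is where one uses that each $B_i$ has $\gcd 1$, so that the needed coprimality is available. Iterating over all the finitely many bad windows yields a circular extension $M'$, with $r',m'$ bounded in terms of $r,m$ (hence in terms of $M$, as required for the eventual application).

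I would organize the writeup as: (1) reduce to locating the bad windows and observe that windows meeting the $I_r$ block are automatically unimodular; (2) state and prove the single-column splicing lemma, verifying both that it is an extension (kernel isomorphism via $\pi_J$, as in Lemma \ref{lem:firstext}) and that it cures one bad window; (3) verify the monotonicity claim that already-good windows stay good after a splice; (4) run the induction on the number of bad windows to finish. Routine determinant bookkeeping — expanding along the inserted column, tracking which windows overlap it — I would relegate to short explicit computations rather than belabor.

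The main obstacle I expect is step (3), the monotonicity: ensuring that repairing one window never breaks a previously repaired one. A naive splice could shift indices and spoil a neighbouring window. The way I would handle this is to insert, alongside the "repair" column, a buffer of standard-basis columns (with corresponding new identity rows, so the extension property is automatic), so that each repaired window becomes isolated from the next repair site — essentially making the repairs pairwise non-interfering by spacing them out. This costs a controlled increase in $m'$ and $r'$ but is harmless since we only need \emph{some} finite circular extension. A secondary subtlety is making sure the coprimality hypothesis needed for each local splice genuinely follows from $\gcd(B_i)=1$ together with the presence of the identity columns; tracking this carefully across successive extensions is the place where the argument needs the most care.
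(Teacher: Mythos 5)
Your high-level intuition --- enlarge $M$ while maintaining the kernel isomorphism, use $\gcd(B_i)=1$, use identity blocks as buffers, observe that windows meeting the $I_r$ block are free --- is on the right track, but the core technical step is missing, and the ``fix one bad window at a time'' strategy has obstacles that the buffer idea, as stated, does not resolve. First, an extension in the sense of Definition~\ref{defn:sysext} must preserve the kernel isomorphism, so whenever you add columns you must also add rows; but adding rows increases $r'$, which \emph{changes the size of every circular window}. A window of size $r$ that you have arranged to be unimodular is simply not the same object as the size-$r'$ window covering those columns after the extension, so ``previously-fixed windows stay fixed'' is not a meaningful invariant without substantially more care. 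Second, a single column of $M'$ lies in $r'$ distinct circular windows; your local splicing lemma fixes one window, but that new column then appears in $r'-1$ others, and the primitivity/coprimality condition it needs in each of those must be engineered simultaneously. Standard-basis buffer columns do not decouple these constraints: buffer columns themselves sit inside windows, and the unimodularity of those windows is not automatic.

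The paper sidesteps both problems by a change of viewpoint that your writeup does not make. If $M'=(I_{r'}|B')$, then every window not touching the $I_{r'}$ block satisfies $\det M'_{(i)}=\pm\det B'_{[i-(m'-r'),\,i-1]}$, so circularity of $M'$ reduces to the condition that \emph{every $r'$ consecutive rows} of the rectangular matrix $B'$ form a unimodular square submatrix; the remaining windows are automatically unimodular by triangularity. This converts a cyclic column condition into a linear row condition with no wrap-around, which is the decisive simplification. The construction of such a $B'$ is then Lemma~\ref{lem:adapt11}: starting from a unimodular $U_i$ with top row $B_i$ (supplied by $\gcd(B_i)=1$ and \cite[Lemma 9]{KSVGR}), one produces a tall matrix $\overline{U_i}=(I_r,S,U_i,T,I_r)^T$ all of whose $r$-consecutive-row windows are unimodular, by appending one row at a time implementing a Euclidean-algorithm step and inducting on $r$; the terminal $I_r$ blocks are what make stacking the $\overline{U_i}$ safe. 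These blocks play exactly the non-interference role you want your buffers to play, but they are buffer \emph{rows} whose safety comes from triangular structure, not buffer columns relying on an unverified primitivity hypothesis. As written, your proposal locates the difficulty but does not overcome it; to complete it you would essentially need to rediscover both the row-reformulation and the Euclidean interpolation.
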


Before we turn to the proof, let us motivate this proposition by briefly discussing circular matrices. One of the simplest examples of a circular matrix is the one corresponding to Schur's equation, that is $M = (1\;\;\;\;\; 1\;\;-1)$. In Example  \ref{ex:Schur} we saw that this has a nice representation, given in \eqref{eq:SchurPsi}, having the triangle as its corresponding graph $F$. Circular matrices are very convenient in that they provide simple generalizations of this construction, as shown by the following result.

\begin{lemma}\label{lem:circrep}
Suppose that $M\in \Z^{r\times m}$ is circular and that $m\geq r+2$. Then there exists $\Psi \in \Z^{m\times m}$ such that for any abelian group $G$, the homomorphism $\Psi:G^m\to G^m$ is an $(m,m,r+1)$-representation of $(M,G)$, with $C_j=\{j,j+1\mod m,\ldots, j+r\mod m\}$ for each $j\in [m]$.
\end{lemma}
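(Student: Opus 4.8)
\textbf{Proof plan for Lemma \ref{lem:circrep}.}

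The plan is to imitate the construction in Example \ref{ex:Schur} for Schur's equation, using the circularity of $M$ to guarantee that all the needed invertibility holds. First I would define $\Psi\in\Z^{m\times m}$ row by row. For each $j\in[m]$, the $j$-th row $\psi_j$ should be supported on the block $C_j=\{j,j+1\bmod m,\dots,j+r\bmod m\}$ of $r+1$ consecutive indices (read cyclically), so that condition (i) of Definition \ref{defn:Cayley-rep} holds with $k=r+1$ and $t=m$. The natural choice is to take $\psi_{C_j}:G^{C_j}\to G$ to be the unique (up to sign) integer linear form whose kernel over $\Q$ is exactly the $r$-dimensional space cut out by the $r$ rows of $M_{(\ast)}$ restricted to those coordinates --- more concretely, since $M_{(j+r)}$ (the $r\times r$ matrix on columns $j+1,\dots,j+r$) is unimodular, one can solve for those $r$ coordinates in terms of the coordinate $j$ and read off a single integer relation with the coordinate $j$ having coefficient $\pm1$. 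Writing these relations out gives $\Psi$ as an $m\times m$ integer matrix. I would set this up carefully so that, by construction, the composite system $\{\psi_j=0:j\in[m]\}$ is equivalent over any abelian group $G$ to the system $Mx=0$ after the identification $x=\Psi(v)$; here circularity is what makes every local elimination step have unimodular (hence over any $G$ invertible) coefficient matrix.

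Next I would verify the three conditions of Definition \ref{defn:Cayley-rep} for the homomorphism $\Psi:G^m\to G^m$. Condition (i) is immediate from the support prescription. For condition (ii), $\Psi(G^m)=\ker_G M$: one inclusion is that every $\psi_j$ vanishes on the image of the ``parametrizing'' map, which holds because the $j$-th relation was designed to be a consequence of $Mx=0$; the reverse inclusion, that every element of $\ker_G M$ is hit, follows because $m-r$ of the coordinates can be taken as free parameters (using unimodularity of one of the $M_{(j)}$ to express the rest), and this parametrization is exactly realized by $\Psi$ acting on a suitable copy of $G^m$ --- here the hypothesis $m\ge r+2$ enters to ensure there is genuine room, and more importantly it is needed so that the color-classes $C_j$ are $m$ \emph{distinct} subsets of $\binom{[m]}{r+1}$. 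For condition (iii), I need $p_{C_j}(\ker_G\Psi)=\ker_G\psi_{C_j}$ for each $j$. The inclusion $\subseteq$ is trivial. For $\supseteq$, given $y'\in G^{C_j}$ with $\psi_{C_j}(y')=0$, I must extend $y'$ to a full $y\in\ker_G\Psi$: using the unimodularity of the square matrices $M_{(\ell)}$ for the other blocks $\ell$, I can propagate the values cyclically around $[m]$, solving each successive local relation uniquely over $G$ (invertibility of $M_{(\ell)}$ over $G$), and the circular consistency of these solutions is precisely guaranteed by the circularity of $M$ together with $\psi_{C_j}(y')=0$. Finally, continuity and the integer-matrix claim are automatic since $\Psi$ is given by an integer matrix and $G^m\to G^m$ is then a continuous homomorphism of compact groups when $G$ is such.

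The main obstacle I anticipate is condition (iii), specifically the extension/propagation argument: one must check that ``going around the cycle'' of local eliminations returns a consistent value, i.e. that the only obstruction to extending $y'$ is the single equation $\psi_{C_j}(y')=0$ and that no further constraint appears from the wrap-around. This is where the defining property of a circular matrix (every consecutive $r\times r$ block $M_{(j)}$ unimodular, not just one of them) must be used in full, and it is worth isolating the bookkeeping: express, for each $\ell$, the coordinates in $C_\ell\setminus\{\ell+r\}$ uniquely in terms of coordinate $\ell+r$ via $M_{(\ell+r)}^{-1}$, chain these, and observe that composing all $m$ of these unimodular changes of coordinate around the cycle yields (by a telescoping determinant computation) the identity together with exactly the relation $\psi_{C_j}=0$. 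A secondary, more routine, point is to pin down the signs and the precise indexing of $\Psi$ so that all $m$ rows are genuinely distinct and the supports are exactly the $C_j$; this is bookkeeping rather than substance, and I would organize it by fixing one block, say columns $1,\dots,r$, as the base and writing everything relative to it.
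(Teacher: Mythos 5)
Your plan has the right flavour (imitate the Schur example, exploit circularity), but there are two places where it is genuinely off, and a careful reading suggests they are not just bookkeeping.

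First, the construction of $\Psi$. The paper builds it \emph{column by column}: for each $j$, set $y=M_{(j)}^{-1}M^j\in\Z^r$ (unimodularity of $M_{(j)}$ keeps this integral) and let $\Psi^j\in\Z^m$ have entries $y$ in positions $j-r,\dots,j-1$, entry $-1$ in position $j$, and $0$ elsewhere; then $M\Psi^j=0$, so $M\Psi=0$. Your row description --- that $\psi_{C_j}$ should be ``the unique integer form whose kernel is the $r$-dimensional space cut out by the $r$ rows of $M$ restricted to $C_j$'' --- is dimensionally confused: $\psi_{C_j}$ is a form on $r+1$ variables, so its kernel is $r$-dimensional, while the restrictions of the $r$ rows of $M$ to $C_j$ span an $r$-dimensional space; what you presumably want is the $1$-dimensional orthogonal complement of that span, not the span itself. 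More importantly, the constraint $M\Psi=0$ lives naturally on the \emph{columns} of $\Psi$, not the rows, so the column-based construction is the clean one and also hands you for free the $-1$ entries on the diagonal (equivalently, the fact that the top-left $(m-r)\times(m-r)$ block of $\Psi$ is upper triangular with $-1$'s), which is exactly what drives the surjectivity and extension arguments.

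Second, and more serious, your plan for condition (iii). You propose to extend $y'$ by propagating all the way around the cycle and then verifying consistency of the wrap-around by a ``telescoping determinant computation.'' Two problems. (a) You propose to propagate using the matrices $M_{(\ell)}^{-1}$, but those live in the $x$-space (solutions of $Mx=0$), whereas the extension you need is in the $v$-space (the domain of $\Psi$); these are different spaces, and $M_{(\ell)}^{-1}$ does not directly tell you how to solve for the next coordinate of $v$. The right tool is the row structure of $\Psi$ itself: since the row $\psi_\ell$ has support in $C_\ell$ and a $-1$ entry at position $\ell$, the single equation $\psi_{C_\ell}(v_\ell,\dots,v_{\ell+r})=0$ determines $v_\ell$ \emph{uniquely} in terms of $v_{\ell+1},\dots,v_{\ell+r}$, over any $G$. (b) You do not need to go around the whole cycle, and doing so introduces the consistency obstruction you are worried about. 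The paper's trick is to stop after $m-r-1$ new steps: starting from $y'$ on $C_j$, solve for $v_{j-1},\dots,v_{j-(m-r-1)}$ one at a time. This produces $v\in G^m$ with $\psi_\ell(v)=0$ for $m-r$ consecutive indices $\ell$. But $\Psi v\in\ker_G M$ automatically (since $M\Psi=0$), and an element of $\ker_G M$ is uniquely determined by any $m-r$ consecutive coordinates, because the complementary $r\times r$ block of $M$ is unimodular. Since $0$ and $\Psi v$ agree on those $m-r$ coordinates, $\Psi v=0$. No wrap-around, no monodromy computation. Your proposed telescoping-determinant argument is not obviously false, but it is an unnecessary detour and as sketched it is not a proof; the paper's shortcut is what makes (iii) go through cleanly.

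Minor: your remark that $m\ge r+2$ ``ensures there is genuine room'' for parametrization is a red herring; the parametrization works for any $m>r$. The hypothesis $m\ge r+2$ is used solely to ensure the $m$ sets $C_j$ of size $r+1$ are distinct, as you also correctly note.
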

Thus, the triangle graph corresponding to Schur's equation is generalized here to the `cyclic' $(r+1)$-uniform hypergraph on $[m]$ with edges $C_j$. Analogues of this construction have been used in previous works (though not in relation to hypergraph representations as defined here), specifically in \cite{C, KSVGR}. In particular, Definition \ref{def:circmatrix} is an analogue of the notion of `$n$-circular matrix' used in \cite{KSVGR}. 
\begin{proof}[Proof of Lemma \ref{lem:circrep}]
We construct $\Psi$ as follows: the $j$-th column $\Psi^j$ is an element of $\Z^m$ lying in $\ker_\Q M$, with support inside $\{j-r,j-(r-1),\ldots, j-1,j\}$ (subtractions mod $m$), and with $j$-th entry equal to $-1$. More precisely, let $y= M_{(j)}^{-1}  M^j \in \Z^r$; then $M_{(j)} y = M^j$. We then define $\Psi^j$ by $\Psi^j(i)=y(i)$ for $i\in \{j-r,j-(r-1),\ldots, j-1\}$,  $\Psi^j(j)=-1$, and $\Psi^j(i)=0$ otherwise. Note that we have indeed $\Psi^j\in \Z^m$ and $M \Psi^j = 0$. Note also that the resulting matrix $\Psi$ has row $j$ with support indeed contained in the set $C_j=j+[0,r] \mod m$, and that these sets $C_j$ are distinct since $m\geq r+2$. Hence, condition (i) from Definition \ref{defn:Cayley-rep} is satisfied with $k=r+1$.\\
\indent Let us check condition (ii), i.e. that $\Psi(G^m)= \ker_G M$. Since $M\Psi=0$, we clearly have $\Psi(G^m)\subset \ker_G M$. To see equality, fix any $x\in \ker_G M$. 
Observe that $x$ is uniquely determined by any sequence of $m-r$ consecutive coordinates mod $m$, because the submatrix formed by the remaining $r$ columns of $M$, being unimodular, gives a bijection on $G^r$. Hence, if we find $y\in G^m$ such that $\Psi(y)$ agrees with $x$ on such a sequence of $m-r$ coordinates, then this together with the fact that $M\Psi(y)=0$ will imply that $x=\Psi(y)\in \Psi(G^m)$. Now note that the top-left square submatrix of $\Psi$ of order $m-r$ is upper triangular with entries $-1$ in the diagonal, so we  can indeed find the desired element $y$.\\
\indent To check condition (iii), let $\cG=\ker_G \Psi$. We have to check that for each $j\in [m[$, the map $\Psi_{C_j}$ (the restriction of the $j$-th row of $\Psi$ to $G^{C_j}$) satisfies $\ker \psi_{C_j}=p_{C_j}(\cG)$. Clearly $p_{C_j}(\cG)\subset \ker\psi_{C_j}$, since if $\Psi(y)=0$ then in particular the $j$-th entry, which equals $\psi_{C_j}\big(p_{C_j} (y)\big)$, is 0. To see the opposite containment, suppose that $y'\in G^{C_j}$ satisfies $\psi_{C_j}(y')=0$. We want to show that there exists $y\in \cG$ such that $p_{C_j}(y)=y'$. Using the row structure of $\Psi$, we can find successively elements $y_{j-1},y_{j-2},\ldots, y_{j-(m-r-1)}\in G$ such that for each $\ell\in \{ j,j-1,\ldots, j-(m-r-1)\}$ we have $\psi_{C_{\ell}} (y_{\ell},y_{\ell+1},\ldots, y_{\ell+r})=0$. We use these elements $y_\ell$ to extend $y'$ to an element $y\in G^m$, defined by $p_{C_j}(y)=y'$ and $p_\ell(y)=y_\ell$ for each $\ell\in [m]\setminus C_j$. By construction, $\Psi y$ has $m-r$ coordinates equal to 0.  Since 0 and $\Psi y$ are both in $\ker_G M$ and agree on these $m-r$ coordinates, by the observation in the previous paragraph we must have $\Psi y = 0$, so $y\in \cG$ as required.\\
\indent If $G$ is a topological group then $\Psi$ is clearly continuous. We have thus shown that all the conditions in Definition \ref{defn:Cayley-rep} are satisfied.
\end{proof}
Let us now turn to establishing Proposition \ref{prop:simplext}. The proof is an adaptation of an argument from \cite{KSVGR}. Given a matrix $B$, we shall denote by $B_{[i_1,i_2]}$ the submatrix of $B$ formed by consecutive rows with indices $i_1,i_1+1,\ldots, i_2$. First we adapt \cite[Lemma 11]{KSVGR}, to obtain the following.

\begin{lemma}\label{lem:adapt11}
Let $B\in \Z^{r\times r}$ be a unimodular matrix. Then for some integer $s=O_B(1)$, there exist integer matrices $S,T$ such that the $s\times r$ matrix
\begin{equation}\label{eq:adapt11}
\overline{B}=\left(\begin{array}{c} I_r\\ S\\B\\T\\I_r\end{array}\right)
\end{equation}
satisfies the following property: for each $i\in [1,s-(r-1)]$, the $r\times r$ submatrix $\overline{B}_{[i,i+r-1]}$ is unimodular.
\end{lemma}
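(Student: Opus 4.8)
Call a finite sequence of vectors in $\Z^r$ \emph{admissible} if every $r$ of its consecutive terms form a unimodular matrix; the conclusion of the lemma is precisely that $\overline{B}$, read as the sequence of its rows, is admissible (with $s=O_B(1)$ terms). My plan is to build $\overline{B}$ by concatenating two admissible sequences: one running from the rows of $I_r$ to the rows of $B$, which supplies the block $(I_r;S;B)$, and one running from the rows of $B$ to the rows of $I_r$, which supplies $(B;T;I_r)$, glued along their common copy of $B$. The first point to check is that such a gluing is harmless: if two admissible sequences share a terminal, resp. initial, block consisting of exactly $r$ rows, then every window of the concatenation that straddles the junction turns out, after a short index computation using that the shared block has length exactly $r$, to coincide with a genuine window of the second sequence; hence the concatenation is again admissible. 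The same computation shows that in the five-block sequence $I_r,S,B,T,I_r$ no window can simultaneously meet $S$, $B$ and $T$ (such a window would need at least $r+2$ terms), so each window lies inside one of the two halves and the whole of $\overline{B}$ is admissible. Thus it suffices to produce, for an arbitrary pair of ordered $\Z$-bases $\beta,\beta'$ of $\Z^r$, an admissible sequence beginning with $\beta$ (in the given order) and ending with $\beta'$ (in the given order), of length $O_{\beta,\beta'}(1)$.

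For this I would first record the local rule: if an admissible sequence has last $r$ terms $w_1,\ldots,w_r$, then appending a vector $v$ keeps it admissible exactly when $v=\pm w_1+(\text{an integer combination of }w_2,\ldots,w_r)$. In particular one may always append a copy of the current first term $w_1$, which amounts to cyclically rotating the current window $(w_1,\ldots,w_r)\mapsto(w_2,\ldots,w_r,w_1)$. Next, viewing a basis as the matrix having the $\gamma_i$ as its rows, the change-of-basis matrix $Q\in GL_r(\Z)$ carrying $\beta$ to $\beta'$ factors — since $GL_r(\Z)$ is generated by elementary transvections together with the diagonal $\pm1$ matrices (standard; or use the Smith normal form) — into $O_{\beta,\beta'}(1)$ factors, each of which acts on the left by replacing some $\gamma_a$ by $\gamma_a+c\gamma_b$ with $b\neq a$, or by $-\gamma_a$. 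By the gluing observation it is therefore enough to realise one such elementary move by a short admissible sequence and then compose.

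The gadget for a move $\gamma_a\mapsto\gamma_a+c\gamma_b$ is: starting from the window $(\gamma_1,\ldots,\gamma_r)$, rotate $a-1$ times to bring $\gamma_a$ to the front, obtaining the window $(\gamma_a,\gamma_{a+1},\ldots,\gamma_{a-1})$; then append $\gamma_a+c\gamma_b$ — legitimate because, expressed against the current window, the coefficient of the first term $\gamma_a$ is $1$ (and $\gamma_b$ occurs among the remaining terms, as $b\neq a$) — obtaining the window $(\gamma_{a+1},\ldots,\gamma_{a-1},\gamma_a+c\gamma_b)$; then rotate $r-a$ more times to restore the standard order. This uses $r$ new rows and keeps every window unimodular; a sign flip $\gamma_a\mapsto-\gamma_a$ is handled identically, appending $-\gamma_a$ and using the allowed coefficient $-1$. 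Composing these $O_{\beta,\beta'}(1)$ gadgets (each handing its output basis to the next gadget in standard row order, so that the gluing observation applies) yields the required admissible sequence from $\beta$ to $\beta'$; feeding in $\beta=$ rows of $I_r$, $\beta'=$ rows of $B$, and then $\beta=$ rows of $B$, $\beta'=$ rows of $I_r$, produces $S$ and $T$ and hence an $\overline{B}$ of the asserted shape with $s=O_B(1)$.

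The only genuinely delicate point — the one I would be most careful about — is the straddling-window bookkeeping: one must be certain that concatenating admissible sequences along a shared block, and then assembling the five blocks $I_r,S,B,T,I_r$, never manufactures a non-unimodular window. Everything rests on the shared blocks having size exactly $r$, equal to the window length, which pins each straddling window to an honest window of one of the pieces; the remaining steps (the local appending rule, the rotation gadget, and the $GL_r(\Z)$ factorisation) are routine.
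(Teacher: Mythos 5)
Your proposal is correct, and it takes a genuinely different route from the paper. The paper works directly with the matrix: it repeatedly appends to $B$ a new bottom row of the form $\sum_i \lambda_i B_i$ with $|\lambda_1|=1$, chosen so that Euclid's algorithm runs on the first column until a $1$ appears, then clears that column and recurses on an $(r-1)\times(r-1)$ unimodular block, interleaving rows $(1,0,\ldots,0)$ to keep every window of $r$ consecutive rows unimodular; the top block $S$ is built symmetrically. Your argument replaces this Euclid-plus-induction scheme by the observation that admissibility of an appended row is exactly the condition ``coefficient $\pm1$ on the oldest row of the current window,'' so that each elementary generator of $GL_r(\Z)$ (transvection or sign change) can be realised by an $r$-row rotation gadget, and the whole of $S$ (resp.\ $T$) comes from a factorisation of $B$ (resp.\ $B^{-1}$) into such generators. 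Your key local rule is right: writing $v=\sum_i a_i w_i$ against the current window gives $\det(w_2,\ldots,w_r,v)=\pm a_1$, so the append is legal iff $a_1=\pm1$; and your straddling-window bookkeeping is sound, since the shared blocks have length exactly $r$ so every window crossing a junction is an honest window of one piece, and no window of length $r$ can meet $S$, $B$ and $T$ simultaneously. Both proofs yield only $s=O_B(1)$ with dependence on the entries of $B$ (yours through the word length of $B$ in the elementary generators, the paper's through the number of Euclid steps), which is all the lemma claims; the paper explicitly flags in its concluding remarks that removing this entry-dependence would require modifying this very lemma. Your version is arguably more conceptual and avoids the induction on $r$ and the ad hoc insertion of rows $(1,0,\ldots,0)$; the paper's version stays closer to its finite-group antecedent \cite[Lemma 11]{KSVGR}, which is why it is presented as an ``adaptation.'' The only cosmetic caveat is that your construction, as described, does not literally exhibit the full blocks $I_r$ and $B$ as contiguous sub-blocks with $S$ strictly between them unless you note (as you implicitly do) that the initial window of the first half is $I_r$ in standard order and its terminal window is $B$ in its given row order — which your gadget composition does guarantee.
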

In other words, each submatrix of $\overline{B}$ formed by $r$ consecutive rows is unimodular.

\begin{proof} We shall say that an integer matrix with $r$ columns is \emph{good} if each of its square submatrices formed by $r$ consecutive rows is unimodular.

We first claim that there exists a matrix $T$ such that the matrix $L=\left(\begin{array}{c}
			B \\
			T\\
			I_r\\
		\end{array}\right)$ is good and has $O_B(1)$ rows. (The upper part of $\overline{B}$ will be found analogously.)\\
\indent	This holds for $r=1$ since we can set $L=\left(\begin{array}{c} \pm 1\\ 1\end{array}\right)$. For $r>1$, we can suppose by induction that the claim holds for $r-1$. The matrix $L$ is constructed by repeatedly adding an appropriate new row at the bottom of $B$ while ensuring that the new bottom $r\times r$ submatrix is unimodular. The idea is that each new row essentially captures a step in an application of Euclid's algorithm to the entries in the first column of $B$.\\
\indent Thus we first form the matrix
$\left(\begin{array}{c}
			B_2 \\
			B_3\\
			\vdots\\
			B_r\\
			\sum_{i=1}^r \lambda_i B_i
			\end{array}\right),$ 
where $\lambda_1\in\{-1,1\}$, and such that $|\lambda_1 B_{1,1}+\sum_{i=2}^r \lambda_i B_{i,1}|$ is minimized.  This coefficient $\lambda_1$ having magnitude 1 ensures that the new row is the result of an elementary row operation on $B$, so that the above matrix is still unimodular. If $|B_{1,1}|=\max_{i\in [r]} |B_{i,1}|>1$, then we can find $\{\lambda_i: i\in [r]\}$ with $|\lambda_1|= 1$ such that $|\lambda_1 B_{1,1}+\sum_{i=2}^r \lambda_i B_{i,1}|<|B_{1,1}|$. Otherwise, note that we can certainly make the left side here at most $|B_{1,1}|$. It follows that after repeating this addition of a new row at most $r$ times, we have decreased the greatest magnitude of the entries in the first column (provided that this magnitude was greater than 1). We then iterate this process, denoting by $T_i$ the $i$-th new bottom row. By Euclid's algorithm, after $\ell=O_B(1)$ steps, we obtain a good matrix $\left( \begin{array}{c} B\\ T' \end{array}\right)$ where 
$
T'=\left( \begin{array}{c} T_1\\T_2 \\ \vdots \\ T_\ell \end{array}\right) 
$ 
and the first entry of $T_\ell$ is $1$ (the entries in the first column of $B$ are coprime by assumption). Now we can carry out $r-1$ further steps consisting in subtracting integer multiples of $T_\ell$ from previous rows, to obtain the $r\times r$ matrix $T'=\left( \begin{array}{cc}  1 & \ast\\0&B'\end{array}\right)$,  with top row $T_\ell$, and with $B'\in\Z^{(r-1)\times (r-1)}$ being unimodular.\\
\indent Now we apply the induction hypothesis to $B'$, obtaining an $s' \times (r-1)$ good matrix $
L'=\left( \begin{array}{c} B' \\ T''\\ I_{r-1}\\ \end{array}\right)$, where $s'=O_{B'}(1)=O_B(1)$. We then add to $L'$ a first column of zeros, and we insert in the resulting matrix the row $(1,0,\ldots ,0)$ of length $r$ between the  positions $j(r-1)$ and $j(r-1)+1$, for each $j\in [1,s'-1]$. The resulting matrix $L''$ has $O_B(1)$ rows and, by construction, the following matrix is good:
$L=\left( \begin{array}{c} B \\ L'' \end{array}\right)=\left(\begin{array}{c} B \\ T\\ I_r \end{array}\right)$, 
for some matrix $T$ with $O_B(1)$ rows.\\
\indent The proof is completed by a similar argument adding top  rows to $B$, yielding the desired matrix $S$.	
\end{proof}
To complete the proof of Proposition \ref{prop:simplext}, we adapt the argument from \cite[Lemma 12]{KSVGR}, using Lemma \ref{lem:adapt11}  instead of \cite[Lemma 11]{KSVGR}.

\begin{proof}[Proof of Proposition \ref{prop:simplext}]
Since each row $B_i$ of $B$ satisfies $\gcd(B_i)=1$, by \cite[Lemma 9]{KSVGR} there exists an $(m-r)\times (m-r)$ unimodular matrix $U_i$ with top row equal to $B_i$. Applying Lemma \ref{lem:adapt11} to each such matrix $U_i$ we obtain $\overline{U_i}$ as given by \eqref{eq:adapt11}. We then form the following $r'\times (m-r)$ matrix, where $r' =O_B(1)$: 
$B'=
\begin{pmatrix}
\overline{U_1} \\
\overline{U_2} \\
\vdots \\
\overline{U_r}
\end{pmatrix}$.
Let $M'=(I_{r'}|B')$, and note that $M'=\left( I_{r'}\left| \begin{array}{c}I_{m-r} \\
X \\
I_{m-r}\end{array}\right.\right)$, for some matrix $X$.

We claim that $M'$ is circular. To see this, let $M'_{(i)}$ denote the square submatrix formed by $r'$ consecutive columns of $M'$ in the circular order, starting with the $i$-th column. Then, for the first $m-r$ and last $m-r$ values of $i\in [m']$ it is clear that $M'_{(i)}$ is unimodular; for example, for the first $m-r$  values, $M'_{(i)}$ is unimodular because its columns form a circular permutation of the columns of a lower triangular matrix with diagonal entries equal to 1. For $i\in (m-r,m'-(m-r)]=(m-r,r']$, note that $\det M'_{(i)} = \pm \det B'_{[i-(m-r),i-1]}=\pm 1$ so $M'_{(i)}$ is indeed unimodular.\\
\indent To complete the proof, let us specify the index set $J\subset [m']$ of size $m$ showing that $M'$ is an extension of $M$. Let $J_1\subset [r']$ be the set of size $r$ containing the subscript of each row of $B'$ that is the first row of a submatrix $U_i$ (recall that $U_i$ is a submatrix of $\overline{U_i}$). Since this first row is $B_i$ by construction, we thus have that the order-preserving bijection $\sigma_{J_1}:[r]\to J_1$ satisfies $B_i=B'_{\sigma_{J_1}(i)}$. We set $J=J_1\cup [r'+1,m'] \subset [m']$. From the structure of $M'$, it then follows that the homomorphism $\pi_J$ restricted to $\ker_G M'$ gives an isomorphism $\ker_G M'\to \ker_G M$, as required. Indeed, the submatrix of $M'$ formed by the rows indexed by $J_1$ is equal to $M$ (up to relabelling rows and columns) so if $x'\in \ker_G M'$ then $\pi_J(x')\in \ker_G M$; moreover,  given $x\in \ker_G M$, the element of $G^J$ with $j$-th coordinate $x(\sigma_J^{-1}(j))$ is in the kernel of the above submatrix, and using the structure of $M'$ we then extend this element uniquely to an element  $x'\in \ker_G M'$ such that $\pi_J(x')=x$ (since an element  $x'\in \ker_G M'$ is uniquely determined by its last $m'-r'=m-r$ coordinates).
\end{proof}

Let us finally combine the above ingredients to obtain the main result of this subsection.

\begin{proof}[Proof of Proposition \ref{prop:simplesystems}]
By Proposition \ref{prop:simplext} there is a circular matrix $M'=(I_{r'}|B')\in \Z^{r'\times m'}$ extending $M$. By Lemma \ref{lem:circrep} we have an $(m',m',r'+1)$-representation for $M'$ given by an integer matrix $\Psi'$. Hence, by Lemma \ref{lem:key4ext}, we obtain a $(t,m,k)$-representation $\Psi=\pi_J\circ \Psi'$ for $M$, also given by an integer matrix, with $t=m'$ and $k=r'+1$.
\end{proof}

\begin{remark}
Note that Proposition \ref{prop:simplesystems} establishes Proposition \ref{prop:diagformrep} for simple matrices in the strong sense that the conditions in Definition \ref{defn:Cayley-rep} of hypergraph representability  are satisfied with $G_*=G$ and $\Psi$ being just an integer matrix (rather than a more general homomorphism matrix). In the next subsection, the full generality of Definition \ref{defn:Cayley-rep} will be used to handle all remaining matrices of the form $(I_r|B)$. For this purpose, instead of matrix extensions, we shall use a different construction.
\end{remark}

\subsection{General matrices of the form $(I_r|B)$.}

In this subsection we complete the proof of Proposition \ref{prop:diagformrep}, by using Proposition  \ref{prop:simplesystems} to construct a hypergraph representation for any system $(M,G)$ with $M\in \Z^{r\times m}$ a non-plain matrix of the form $(I_r|B)$. This will establish Proposition \ref{prop:repexist}, and thereby we shall have completed the proof of Theorem \ref{thm:cag-rem-lem+}.\\
\indent Let us first explain briefly the main difficulty, and in particular why the rest of the argument from the finite setting of \cite{KSVGR} does not work in our setting of general compact abelian groups.\\
\indent We want to find a hypergraph representation for any given system with a non-simple $r\times m$ matrix $(I_r|B)$. There are two cases to treat:  in the main case we have $m\geq r+2$ and for some row $B_i$ of $B$ we have $\gcd(B_i)=s>1$; in the second case we have $m=r+1$.\\
\indent In \cite{KSVGR}, an analogue of the main case is addressed using a  notion of `system extension', which differs from the matrix extensions  used in this paper. In particular, the extensions in \cite{KSVGR} allow one to multiply group elements by $s$ and thus reduce the task to the case of simple matrices; see for instance the proof of \cite[Lemma 10]{KSVGR}. This multiplication is allowed in the setting of finite abelian groups because it does not increase the measure of sets, which is important to ensure that the measures of the removal sets are kept small. Our general setting includes groups in which multiplication by an integer may increase measures (e.g. the circle group), so we cannot use this argument. Instead, we shall construct a certain `covering' of the original kernel $\ker_G M$ by kernels of systems given by \emph{simple} matrices associated with the original matrix.\footnote{This construction plays a role, relative to Definition \ref{defn:Cayley-rep}, somewhat analogous to the role played by the blowup construction described in \cite{vKAM} relative to the arguments in \cite{KSVGR}.} Proposition  \ref{prop:simplesystems} gives us a representation for each of these simple matrices, and we shall then combine these to obtain a representation for the original system. This will address the first case stated above.\\
\indent The second case will then be simpler to handle and will be treated at the end of this section.

Before we go into the details of the main case, let us briefly illustrate the idea of the argument.

\begin{example}
Consider a system $(M,G)$ with $M=(1\;\; 2 \;\; 2)$, a non-simple matrix of the form $(I_1|B)$. Consider then the following two systems: $(M^{(0)},G_0)$ with $M^{(0)}=(1 \;\; 1 \;\; 1)$ and $G_0=G$, and $(M^{(1)},G_1)$ with 
 $M^{(1)}=(1 \;\; 2 \;\; 2 \;\; 1)$ and $G_1$ the subgroup of $G$ consisting of the preimages of $0_G$ under multiplication by $2$. Note that $M^{(0)}$ and $M^{(1)}$ are simple matrices, so we have representations given by matrices $\Psi^{(0)}\in \Z^{3\times 3}$ and $\Psi^{(1)}\in \Z^{4\times t}$ for $(M^{(0)},G_0)$, $(M^{(1)},G_1)$ respectively.

Let us denote an element of $\ker_{G_0} M^{(0)}$ by $\mathbf{x}^{(0)}=(x_{0,1},x_{0,2},x_{0,3})$ and similarly an element of $\ker_{G_1} M^{(1)}$ by $\mathbf{x}^{(1)}=(x_{1,1},x_{1,2},x_{1,3},x_{1,4})$. Consider the following map:
\begin{align}
	K\;:\;\ker_{G_0} M^{(0)}\times\ker_{G_1} M^{(1)} &\to \ker_G M \nonumber \\
(\mathbf{x}^{(0)},\mathbf{x}^{(1)}) & \mapsto\;\mathbf{x} = (2 (x_{0,1}+x_{1,1}),x_{0,2}+x_{1,2},x_{0,3}+x_{1,3}). \nonumber
\end{align}
This map is surjective. Moreover, the preimages of any solution $\mathbf{x}\in \ker_G M$ have convenient `covering' properties when fixing any given coordinate $\mathbf{x}(j)$ (see part (iii') of Definition \ref{defn:Cayley-rep-var}), properties which are obtained essentially by using the component $x_{1,4}$ as a free variable. These properties are then used to construct a representation $\Psi$, consisting essentially in a $3\times (3+t)$ matrix in which the left $3\times 3$ submatrix is given by $\Psi^{(0)}$ and the right submatrix is given by $\Psi^{(1)}$ without the last row.
\end{example}

To define our construction formally, we shall use the following simple fact.

\begin{lemma}\label{lem:blowup}
Let $M\in \Z^{r\times m}$, let $G$ be an abelian group, and suppose that $\Psi':{G_*'}^t \to G^m$ is a $(t,m,k)$-representation for $(M,G)$. Let $G_*$ be another abelian group and let $\pi:G_*\to G_*'$ be a surjective homomorphism \textup{(}if $G$ is compact second countable then we assume that the same holds for $G_*$, and that $\pi$ is continuous\textup{)}. Then letting $\pi^t$ denote the homomorphism $G_*^t\to {G'_*}^t$ mapping $g=(g_1,\ldots,g_t)$ to $(\pi(g_1),\ldots,\pi(g_t))$, we have that $\Psi:=\Psi'\circ \pi^t:G_*^t\to G^m$ is also a $(t,m,k)$-representation for $(M,G)$.
\end{lemma}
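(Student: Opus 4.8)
The plan is to verify that $\Psi=\Psi'\circ\pi^t$ satisfies each of the three conditions in Definition \ref{defn:Cayley-rep}, together with the continuity and second-countability requirements when $G$ is compact. The key observation throughout is that $\pi^t$ is a surjective homomorphism $G_*^t\to {G'_*}^t$, and that its rows interact nicely with the row structure of $\Psi'$: for each $j\in[m]$ and $k\in[t]$ the $(j,k)$-entry of $\Psi$ is $\Psi_{j,k}=\Psi'_{j,k}\circ\pi$, so in particular $\Psi_{j,k}$ is the zero homomorphism if and only if $\Psi'_{j,k}$ is (using surjectivity of $\pi$). Hence $\supp\psi_j=\supp\psi'_j$ for all $j$, and condition (i) is inherited directly from $\Psi'$, with the same sets $C_j$.

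Condition (ii) follows since $\Psi(G_*^t)=\Psi'(\pi^t(G_*^t))=\Psi'({G'_*}^t)=\ker_G M$, where the middle equality uses surjectivity of $\pi^t$ (which is immediate from surjectivity of $\pi$) and the last uses condition (ii) for $\Psi'$. For condition (iii), fix $j\in[m]$ and write $C=C_j$. The inclusion $p_{C}(\ker_{G_*}\Psi)\subset\ker_{G_*}\psi_C$ is automatic: if $\Psi(y)=0$ then its $j$-th coordinate, which equals $\psi_C(p_C(y))$ by condition (i), vanishes. For the reverse inclusion, take $y'\in\ker_{G_*}\psi_{C}\subset G_*^{C}$. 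Then $\psi'_{C}(\pi^{C}(y'))=\psi_{C}(y')=0$ where $\pi^C$ is the coordinatewise application of $\pi$ on the $C$-components, so $\pi^C(y')\in\ker_{G'_*}\psi'_{C}=p_{C}(\ker_{G'_*}\Psi')$ by condition (iii) for $\Psi'$. Pick $z\in\ker_{G'_*}\Psi'$ with $p_{C}(z)=\pi^C(y')$; now one needs an element $y\in\ker_{G_*}\Psi$ with $p_C(y)=y'$. Choose any preimage $w\in G_*^t$ of $z$ under $\pi^t$ (using surjectivity); then $\Psi(w)=\Psi'(z)=0$, but $p_C(w)$ need not equal $y'$, only $\pi^C(p_C(w))=\pi^C(y')$. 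To fix this, replace the $C$-components of $w$ by $y'$: set $y$ to agree with $w$ off $C$ and with $y'$ on $C$. Since $\supp\psi_\ell\subset C_\ell$ and only the entries $p_i(\Psi(y))$ for $i\in C$ are affected by changing the $C$-components — more precisely one checks coordinate by coordinate that $\Psi(y)=\Psi(w)=0$, because for each $\ell$ the value $\psi_\ell(y)=\psi_{C_\ell}(p_{C_\ell}(y))$ depends only on coordinates in $C_\ell$, and on $C_\ell\cap C$ we have $p_{C_\ell\cap C}(y)$ matching a genuine solution while on $C_\ell\setminus C$ it matches $w$; the cleanest way is to note $\pi^t(y)=z$ still holds and additionally $p_C(y)=y'$ — actually it is simpler: $\pi^t(y)$ agrees with $\pi^t(w)=z$ off $C$ and equals $\pi^C(y')=p_C(z)$ on $C$, so $\pi^t(y)=z$, whence $\Psi(y)=\Psi'(\pi^t(y))=\Psi'(z)=0$, i.e. $y\in\ker_{G_*}\Psi$ with $p_C(y)=y'$, as desired. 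Finally, if $G$ is compact second countable then by hypothesis $G_*$ is too and $\pi$ is continuous, so $\pi^t$ is continuous and hence $\Psi=\Psi'\circ\pi^t$ is continuous.

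The only mildly delicate point is the construction of the element $y$ in the verification of condition (iii); everything else is routine diagram-chasing with surjective homomorphisms. I would present that step carefully as above, emphasizing that modifying the $C_j$-coordinates of a preimage $w$ of $z$ to match $y'$ does not disturb membership in $\ker_{G_*}\Psi$, which is seen most transparently by checking that the modified element still maps to $z$ under $\pi^t$.
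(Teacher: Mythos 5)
Your proof is correct and follows essentially the same route as the paper's: verify (i) and (ii) directly from surjectivity of $\pi^t$, then for (iii) project $y'$ to $G_*'^{C_j}$, apply (iii) for $\Psi'$ to get $z\in\ker_{G_*'}\Psi'$, lift $z$ to $w\in G_*^t$, and correct $w$ on the $C_j$-coordinates to match $y'$ while noting that $\pi^t$ of the corrected element is still $z$ (the paper does the same correction additively, subtracting an element $h$ supported on $C_j$ with $\pi^t(h)=0$). The false start in the middle of your condition-(iii) argument — checking row by row via $\supp\psi_\ell\subset C_\ell$ — should simply be deleted, since the observation $\pi^t(y)=z$ that you settle on is cleaner and fully sufficient.
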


\begin{proof}
Condition (i) from Definition \ref{defn:Cayley-rep} holds clearly for $\Psi$.\\
\indent Condition (ii) is also clear: by assumption we have $\ker_G M= \Psi'({G_*'}^t)$ and this equals $\Psi'(\pi^t(G_*^t))$ by surjectivity of $\pi$.\\
\indent For condition (iii), fix any $j\in [m]$ and note that we certainly have  $p_{C_j}(\ker_{G_*}\Psi)\subset \ker_{G_*} \psi_{C_j}$. To see the opposite containment, suppose that $g'\in G_*^{C_j}$ satisfies $\psi_{C_j}(g')=0_G$ and note that since $\psi_{C_j}(g')=\psi'_{C_j}(\pi^{C_j}(g'))$ and condition (iii) holds for $\Psi'$, there exists $g_0'\in \ker_{G_*'}\Psi'$ such that $p_{C_j}(g_0')=\pi^{C_j}(g')$. By surjectivity of $\pi$ there exists $g_0\in G_*^t$ such that $\pi^t(g_0)=g_0'$. We may not have $p_{C_j}(g_0)=g'$, but we do have this equality mod $\pi^{C_j}$, that is we have
\[
\pi^{C_j}(p_{C_j}(g_0))=p_{C_j}(\pi^t(g_0))=p_{C_j}(g_0')=\pi^{C_j}(g').
\]
Hence there exists $h'\in \ker_{G_*} \pi^{C_j}$ such that $p_{C_j}(g_0)=g'+h'$. Adding coordinates equal to $0_{G_*}$ to $h'$, we obtain $h\in G_*^t$ such that $p_{C_j}(h)=h'$ and $\pi^t(h)=0$. Letting $g=g_0-h$, we have $\Psi(g)=\Psi'(g_0')-\Psi'(\pi^t(h))=0_{G^m}$ and $p_{C_j}(g)=g'$. Hence condition (iii) holds.\\
\indent Finally, if $\Psi'$ is continuous on the compact abelian group ${G_*'}^t$, then $\Psi$ is also continuous on the compact abelian group $G_*^t$.
\end{proof}

Let us now describe the construction for the main case in detail.\\
Let $M=(I_r|B)\in \Z^{r\times m}$ be the given matrix with $m\geq r+2$ and some row $B_i$ satisfying $\gcd(B_i)>1$. Let $B'$ denote the matrix obtained from $B$ by dividing, for each $i\in [r]$, each coordinate of $B_i$ by $\gcd(B_i)$.\\
\indent Let $M^{(0)}$ be the simple matrix $M^{(0)}=(I_r| B')$, and for each $i\in [r]$ let $M^{(i)}$ be the following $r \times (m+1)$ simple matrix
\[
M^{(i)} =
\begin{pmatrix}
I_{i-1}         & 0  &   0       &   B_{[1,i-1]}'   &  0\\
\mathbf{0} & 1  &   0        & B_i  &   1\\
\mathbf{0} & 0  & I_{r-i}   &  B_{[i+1,r]}' &   0\\
\end{pmatrix},
\]
where $B_{[i_1,i_2]}'$ denotes the submatrix of $B'$ formed by rows $i_1,i_1+1,\ldots, i_2$.\\
\indent Let $G$ be an arbitrary abelian group. For each $i\in [r]$ let $G_i$ denote the preimage of $0_G$ under multiplication by $\gcd(B_i)$ (thus $G_i\leq G$), and let $G_0=G$.\\
\indent Let $G_*= G_0 \times G_1\times \cdots \times G_r$. (Note that $G_*$ is compact second countable if $G$ is.) For each $i\in [0,r]$, let $\pi_i$ be the projection homomorphism  $G_*\to G_i$, and let ${\Psi'}^{(i)}$ be the matrix in $\Z^{m_i\times t_i}$ given by Proposition \ref{prop:simplesystems}, thus ${\Psi'}^{(i)}: G_i^{t_i}\to G_i^{m_i}$ is a $(t_i,m_i,k_i)$-representation for the system $(M^{(i)},G_i)$. Then, by Lemma \ref{lem:blowup}, the homomorphism
\[
\Psi^{(i)}:= {\Psi'}^{(i)}\circ \pi_i^{t_i} :  G_*^{t_i}\to G_i^{m_i}
\]
is also a $(t_i,m_i,k_i)$-representation for the system $(M^{(i)},G_i)$.\\
\indent Let $t = t_0+t_1+\cdots+t_r$. We shall now combine these representations $\Psi^{(i)}$ to define a map $\Psi:G_*^t\to G^m$.\\
\indent Each element $g\in G_*^t$ may be written in the form $g=\Big( g^{(0)},g^{(1)},\ldots, g^{(r)}\Big)$, where $g^{(i)}\in G_*^{t_i}$ for each $i\in [0,r]$. We then define the homomorphism 
\begin{eqnarray}\label{eq:PhiHom}
\Phi: G_*^t & \to & G_0^m\times G_1^{m+1} \times\cdots \times G_r^{m+1}\leq G^{m+r(m+1)}  \\
g & \mapsto & \Big(\Psi^{(0)}\big(g^{(0)}\big),\,\Psi^{(1)}\big(g^{(1)}\big),\ldots,\; \Psi^{(r)}\big(g^{(r)}\big)\Big). \nonumber
\end{eqnarray}
Note that $\Phi$ can be viewed as an $(m+r(m+1)) \times t$ matrix of homomorphisms, where the top-left $m\times t_0$ submatrix is $\Psi^{(0)}$, the submatrix on the next $m+1$ rows and $t_1$ columns is $\Psi^{(1)}$, and so on, and every other entry is the zero homomorphism.\\
\indent By condition (ii) from Definition \ref{defn:Cayley-rep} for each $\Psi^{(i)}$, we have
\[
\Phi\big(G_*^t\big)=\ker_{G_0} M^{(0)}\times \ker_{G_1} M^{(1)}\times \cdots \times \ker_{G_r} M^{(r)}.
\]
We shall denote an element of this group by $\mathbf{x}=\big(\mathbf{x}^{(0)},\mathbf{x}^{(1)},\ldots, \mathbf{x}^{(r)}\big)$, where $\mathbf{x}^{(i)}\in \ker_{G_i} M^{(i)}$.\\
\indent To complete the definition of $\Psi$, we shall now compose $\Phi$ with another homomorphism, denoted $K$, which will combine the entries of $\Phi(g)$ appropriately to produce an element of $\ker_G M$.\\
\indent We define
\begin{equation}
 K := P \circ \Sigma \;: \; \prod_{i=0}^r \ker_{G_i} M^{(i)} \;  \to \; G^m,
\end{equation}
where $\Sigma$ is the following coordinate-summation map:
\[
\begin{array}{cccl}
\Sigma:& \prod_{i=0}^r \ker_{G_i} M^{(i)} & \to &G^m \vspace{0.25cm}\\
&\mathbf{x}=\big(\mathbf{x}^{(0)},\mathbf{x}^{(1)},\ldots, \mathbf{x}^{(r)}\big) & \mapsto & \left(\sum_{i=0}^r \mathbf{x}^{(i)}(j)\right)_{j\in [m]},
\end{array}
\]
and where $P$ is the following coordinate-multiplication map:
\[
\begin{array}{cccl}
P: & G^m & \to & G^m \vspace{0.25cm}\\
& y & \mapsto & \big( \;\gcd(B_1) \,y_1,\; \gcd(B_2)\,y_2,\,\ldots,\, \gcd(B_r)\, y_r,\; y_{r+1},\ldots,\; y_m \;\big).
\end{array}
\]
Note that $\Sigma$ (and hence $K$) ignores the $(m+1)$-st component of $\mathbf{x}^{(i)}$ for each $i\in [r]$.
\begin{defn}\label{defn:genrep}
Let $M=(I_r|B)\in \Z^{r\times m}$ with $m\geq r+2$, let $G$ be an abelian group, and suppose that $M$ is not simple. Then we define the following homomorphism:
\begin{eqnarray}\label{eq:genrep}
\Psi :  G_*^t & \to & G^m \nonumber\\
 g & \mapsto & K \circ \Phi (g),
\end{eqnarray}
where $G_*,K,\Phi,t$ are as defined above.
\end{defn}
One can view $\Psi$ as an $m\times t$ matrix of homomorphisms, with $r+1$ submatrices formed by sets of consecutive columns, where the $i$-th submatrix is formed by $t_i$ such columns and is equal to $P\circ \Psi^{(i)}$, for each $i\in [0,r]$.

\begin{proposition}\label{prop:genrep}
Let $M=(I_r|B)\in \Z^{r\times m}$ with $m\geq r+2$ and let $G$ be an abelian group. Then the homomorphism $\Psi$ in \eqref{eq:genrep} is a $(t,m,k)$-representation for $(M,G)$, with $k=k_0+k_1+\cdots+k_r$.
\end{proposition}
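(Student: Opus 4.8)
The plan is to verify directly that the homomorphism $\Psi=K\circ\Phi$ from Definition \ref{defn:genrep} satisfies the three conditions of Definition \ref{defn:Cayley-rep}, feeding in the corresponding conditions for the representations $\Psi^{(i)}$ of the simple systems $(M^{(i)},G_i)$ supplied by Proposition \ref{prop:simplesystems} (viewed as representations over $G_*$ via Lemma \ref{lem:blowup}). Throughout one exploits the block structure of $\Psi$: its $t=t_0+\cdots+t_r$ columns split into $r+1$ consecutive blocks, the $i$-th of width $t_i$, on which $\Psi$ acts as the first $m$ rows of $P\circ\Psi^{(i)}$. For condition (i) I would take $C_j:=\bigsqcup_{i=0}^r C_j^{(i)}$, where $C_j^{(i)}\in\binom{[t_i]}{k_i}$ is the color class of the $j$-th row of $\Psi^{(i)}$, placed inside the $i$-th column block; then $|C_j|=k_0+k_1+\cdots+k_r=k$, and $\supp\psi_j\subset C_j$ is inherited row by row from the $\Psi^{(i)}$. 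The sets $C_j$, $j\in[m]$, are distinct because the sets $C_j^{(0)}$ already are, by condition (i) for $\Psi^{(0)}$.

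For condition (ii), note first that $\Phi$ maps $G_*^t$ onto $\prod_{i=0}^r\ker_{G_i}M^{(i)}$, by condition (ii) for each $\Psi^{(i)}$, so it remains to show that $K$ maps this product onto $\ker_G M$. That $K$ lands in $\ker_G M$ is a direct computation: testing row $j\in[r]$ of $M$ against $K(\mathbf{x})$ for $\mathbf{x}=(\mathbf{x}^{(i)})_i$, one uses the defining relations of $\ker_{G_i}M^{(i)}$, the identity $B_{j,l}=\gcd(B_j)\,B'_{j,l}$, and, crucially, the fact that every coordinate of $\mathbf{x}^{(i)}$ lies in $G_i$ and is therefore annihilated by $\gcd(B_i)$; this last point is exactly what makes the contributions of the rows of $M^{(i)}$ carrying $B_i$ (rather than $B'_i$), and of the auxiliary $(m+1)$-st coordinates, collapse, leaving $M\cdot K(\mathbf{x})=0$. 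Surjectivity is then easy: given $x\in\ker_G M$, put its free coordinates $x_{r+1},\dots,x_m$ into $\mathbf{x}^{(0)}$ (the rest of $\mathbf{x}^{(0)}$ being determined by $\mathbf{x}^{(0)}\in\ker_{G_0}M^{(0)}=\ker_G M^{(0)}$) and take $\mathbf{x}^{(i)}=0$ for $i\geq 1$; then $K(\mathbf{x})=x$. Hence $\Psi(G_*^t)=\ker_G M$.

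Condition (iii) is the heart of the proof, and I expect it to be the main obstacle. The inclusion $p_{C_j}(\ker_{G_*}\Psi)\subset\ker_{G_*}\psi_{C_j}$ is routine from condition (i). For the reverse, take $g'\in\ker_{G_*}\psi_{C_j}$; its restriction to the $i$-th block is some $g'^{(i)}\in G_*^{C_j^{(i)}}$, and set $w^{(i)}:=\psi^{(i)}_{C_j^{(i)}}(g'^{(i)})\in G_i$ and $z:=\sum_{i=0}^r w^{(i)}$. Unfolding $\psi_{C_j}(g')=0$ through the definitions of $K$, $\Sigma$ and $P$ shows that $z=0$ if $j>r$, while $\gcd(B_j)\,z=0$ if $j\leq r$. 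Using condition (iii) for each $\Psi^{(i)}$ — to correct an arbitrary preimage by an element of $\ker_{G_*}\Psi^{(i)}$, after first matching its $j$-th output to $w^{(i)}$ via condition (ii) for $\Psi^{(i)}$ — the problem reduces to the algebraic task of producing $\mathbf{x}^{(i)}\in\ker_{G_i}M^{(i)}$ with $\mathbf{x}^{(i)}(j)=w^{(i)}$ for every $i$ and with $K\big((\mathbf{x}^{(i)})_{i}\big)=0$. I would build these as follows. For $i\geq 1$: if $j>r$ the $j$-th coordinate is free in $\ker_{G_i}M^{(i)}$, so set it equal to $w^{(i)}$ and all other free coordinates to $0$; if $j\leq r$, use that $\gcd(B'_j)=1$ to choose the free coordinates indexed $r+1,\dots,m$ so that $\mathbf{x}^{(i)}(j)=w^{(i)}$, and in the case $i=j$ additionally use the auxiliary coordinate $\mathbf{x}^{(j)}(m+1)$ to pin $\mathbf{x}^{(j)}(j)=w^{(j)}$ and the remaining freedom to arrange $\sum_l B'_{j,l}\,\mathbf{x}^{(j)}(r+l)=z-w^{(j)}$, which is possible exactly because $z-w^{(j)}\in G_j$. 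Then $\mathbf{x}^{(0)}$ is forced: the requirement $K=0$ at coordinates $r+1,\dots,m$ fixes $\mathbf{x}^{(0)}(j')=-\sum_{i\geq 1}\mathbf{x}^{(i)}(j')$ there, and $\mathbf{x}^{(0)}\in\ker_{G_0}M^{(0)}$ determines the rest. One then checks, again using that the coordinates of each $\mathbf{x}^{(i)}$ are $\gcd(B_i)$-torsion, that the remaining equations $K=0$ (at coordinates $\leq r$) hold automatically, and that $\mathbf{x}^{(0)}(j)=w^{(0)}$; the two subcases $j>r$ and $j\leq r$ use respectively $z=0$ and the arrangement made above. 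Lifting each $\mathbf{x}^{(i)}$ back to $g^{(i)}\in G_*^{t_i}$ with $p_{C_j^{(i)}}(g^{(i)})=g'^{(i)}$ and setting $g:=(g^{(i)})_i$ yields $\Psi(g)=K((\mathbf{x}^{(i)})_i)=0$ and $p_{C_j}(g)=g'$, as required. Finally, in the compact second-countable setting every map involved is continuous ($\Phi$ is assembled from the continuous $\Psi^{(i)}$, and $K=P\circ\Sigma$ from addition and integer multiplication), so $\Psi$ is continuous and the verification is complete.
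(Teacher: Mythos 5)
Your proposal is correct and follows essentially the same route as the paper's proof: conditions (i) and (ii) are verified exactly as in Lemma \ref{lem:cond2final} (the torsion collapse for containment, the $\mathbf{x}^{(0)}$-only lift for surjectivity), and condition (iii) is established as in Lemma \ref{lem:cond3final} by building target solutions $\mathbf{x}^{(i)}$ with prescribed $j$-th coordinate, using the free $(m+1)$-st coordinate of $M^{(j)}$ together with $\gcd(B_j')=1$ and the membership $z-w^{(j)}\in G_j$, then invoking condition (iii') for each $\Psi^{(i)}$ and concluding from the fact that an element of $\ker_G M$ is determined by its last $m-r$ coordinates. Your sign bookkeeping at the $i=j$ step (choosing $B_j'\cdot\mathbf{x}^{(j)}|_{[r+1,m]}=z-w^{(j)}$) is internally consistent and matches what the computation of $\mathbf{x}^{(0)}_j$ requires.
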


Thus, the coordinates of an element of $G_*^t$ with indices in $\big[\sum_{j=0}^{i-1}t_j+1\;,\;\sum_{j=0}^{i}t_j\big]$ are used by $\Psi^{(i)}$ to represent $M^{(i)}$, and these representations are then combined by $K$ to make $\Psi$ a representation for $(M,G)$.\\
\indent To prove Proposition \ref{prop:genrep}, we first record an equivalent definition of a hypergraph representation, where condition (iii) from Definition \ref{defn:Cayley-rep} is replaced with a variant that is convenient for our arguments below. 

\begin{defn}[Hypergraph representation, equivalent formulation]\label{defn:Cayley-rep-var}
Let $M\in \Z^{r\times m}$ and let $G$ be an abelian group. A  $(t,m,k)$\emph{-representation} of the system $(M,G)$ is a  homomorphism $\Psi : G_*^t \to G^m$, for some abelian group $G_*$, such that the following conditions hold:
\begin{enumerate}
\item There are distinct sets $C_1,C_2,\ldots, C_m \in  \binom{[t]}{k}$ such that $\forall\, j\in [m]$, $\supp \psi_j \subset C_j$.
\item  $\Psi(G_*^t)=\ker_G M$. 
\item[(iii')] For every $j\in [m]$, for every $x=(x_1,\ldots, x_m) \in \ker_G M$, if $g'\in G_*^{C_j}$ satisfies $\psi_{C_j}(g')=x_j$, then there exists $g\in G_*^t$ such that $\Psi(g)=x$ and $p_{C_j}(g)=g'$. 
\end{enumerate}
When $G$ is a \emph{compact} abelian topological group, we require that $G_*$ also be compact, and that $\Psi$  be continuous.
\end{defn}

\begin{lemma}\label{lem:equiv-def}
Definitions \ref{defn:Cayley-rep} and \ref{defn:Cayley-rep-var} are equivalent.
\end{lemma}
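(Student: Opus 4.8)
The plan is to fix an abelian group $G$ and a matrix $M\in\Z^{r\times m}$, observe that conditions (i) and (ii) are phrased identically in Definitions \ref{defn:Cayley-rep} and \ref{defn:Cayley-rep-var} (as is the topological side condition, that $G_*$ be compact and $\Psi$ continuous when $G$ is), and reduce the lemma to showing that, \emph{assuming} (i) and (ii), condition (iii) holds if and only if condition (iii') holds, for a given homomorphism $\Psi:G_*^t\to G^m$.

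First I would record the inclusion that holds unconditionally: if $g\in\ker_{G_*}\Psi$, then the $j$-th coordinate of $\Psi(g)$ vanishes, and by (i) this coordinate equals $\psi_{C_j}(p_{C_j}(g))$; hence $p_{C_j}(\ker_{G_*}\Psi)\subseteq\ker_{G_*}\psi_{C_j}$ for every $j\in[m]$. Thus condition (iii) is equivalent to the reverse inclusion $\ker_{G_*}\psi_{C_j}\subseteq p_{C_j}(\ker_{G_*}\Psi)$ for all $j$. For the implication (iii')$\Rightarrow$(iii), fix $j$ and take $g'\in\ker_{G_*}\psi_{C_j}$; apply (iii') with $x=0\in\ker_G M$ (so $x_j=0=\psi_{C_j}(g')$) to obtain $g\in G_*^t$ with $\Psi(g)=0$, i.e. $g\in\ker_{G_*}\Psi$, and $p_{C_j}(g)=g'$, so $g'\in p_{C_j}(\ker_{G_*}\Psi)$.

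For (iii)$\Rightarrow$(iii'), fix $j$, take $x\in\ker_G M$ and $g'\in G_*^{C_j}$ with $\psi_{C_j}(g')=x_j$. By (ii) there is $g_0\in G_*^t$ with $\Psi(g_0)=x$; comparing $j$-th coordinates, $\psi_{C_j}(p_{C_j}(g_0))=x_j=\psi_{C_j}(g')$, so $g'-p_{C_j}(g_0)\in\ker_{G_*}\psi_{C_j}$. By (iii), pick $h\in\ker_{G_*}\Psi$ with $p_{C_j}(h)=g'-p_{C_j}(g_0)$ and set $g:=g_0+h$; then $\Psi(g)=\Psi(g_0)=x$ and $p_{C_j}(g)=p_{C_j}(g_0)+(g'-p_{C_j}(g_0))=g'$, which is exactly (iii'). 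Finally, since conditions (i), (ii) and the continuity/compactness requirement are common to both definitions, this equivalence of (iii) and (iii') yields the lemma.

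There is no genuine obstacle here: the argument is a short diagram chase using only that $\Psi$, $p_{C_j}$ and $\psi_{C_j}$ are homomorphisms, together with the free inclusion $p_{C_j}(\ker_{G_*}\Psi)\subseteq\ker_{G_*}\psi_{C_j}$. The only steps worth spelling out are the use of surjectivity of $\Psi$ onto $\ker_G M$ (condition (ii)) to produce the preliminary lift $g_0$ in the direction (iii)$\Rightarrow$(iii'), and the correction of its $C_j$-projection by translating by a suitable element of $\ker_{G_*}\Psi$.
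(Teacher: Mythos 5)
Your proof is correct and follows essentially the same route as the paper's: the same free inclusion $p_{C_j}(\ker_{G_*}\Psi)\subseteq\ker_{G_*}\psi_{C_j}$, the same specialization $x=0$ for (iii')$\Rightarrow$(iii), and the same lift-and-correct argument via condition (ii) for (iii)$\Rightarrow$(iii').
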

\begin{proof}
Let us recall condition (iii) from Definition \ref{defn:Cayley-rep}:

(iii) For each $j\in [m]$, we have $p_{C_j}(\ker_{G_*} \Psi)=\ker_{G_*} \psi_{C_j}$.

To see that (iii') implies (iii), note that by definition we have $p_{C_j}(\ker_{G_*} \Psi )\subset \ker_{G_*} \psi_{C_j}$, and that the opposite containment also holds, by (iii') applied with $x=0_{G^m}$.\\
\indent To see that (iii) implies (iii'), suppose that $x\in \ker_G M$ and $g'\in G_*^{C_j}$ satisfy $\psi_{C_j}(g')=x_j$. Note that by condition (ii) we have $x=\Psi(g_0)$ for some $g_0\in G_*^t$. Then $\psi_{C_j}(g')=\psi_{C_j}\big(p_{C_j}(g_0)\big)=x_j$, so $g' - p_{C_j}(g_0) \in \ker_{G_*} \psi_{C_j}$. Therefore, by (iii), there exists $g_1\in \ker_{G_*} \Psi$ such that $p_{C_j}(g_1)=g' - p_{C_j}(g_0)$. Letting $g = g_0+g_1$, we thus have $p_{C_j}(g)=g'$ and $\Psi(g)=\Psi(g_0)=x$, so (iii') holds.
\end{proof}
We now turn to the proof of Proposition \ref{prop:genrep}. A central fact that we shall use is that $K$ ignores the $(m+1)$-st coordinates of each ${\mathbf{x}}^{(i)}$, $i\in[r]$, indeed this provides the additional degrees of freedom sufficient for establishing the required properties of $\Psi$.\\
\indent Let $C_{j,i}\subset [t_i]$ denote the support of the $j$-th row of $\Psi^{(i)}$. Then the support of the $j$-th row of $\Psi$ is
\[
C_j\;=\;C_j(0)\;\sqcup\; C_j(1)\; \sqcup\; \ldots\; \sqcup\; C_j(r)\;\subset [t],
\]
where $C_j(i)$ is the shifted set $(t_0+t_1+\cdots+t_{i-1})+C_{j,i} \subset [t]$.\\
\indent It is then clear that condition (i) from Definition \ref{defn:Cayley-rep-var} is inherited by $\Psi$ from the $\Psi^{(i)}$.

We prove the other two conditions separately.

\begin{lemma}\label{lem:cond2final}
The map $\Psi$ in \eqref{eq:genrep} satisfies $\Psi(G_*^t)=  \ker_G M$.
\end{lemma}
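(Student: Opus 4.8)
The goal is the identity $\Psi(G_*^t)=\ker_G M$, and my plan is to prove the two inclusions separately, the harder one being surjectivity. For the inclusion $\Psi(G_*^t)\subset\ker_G M$, I would take an arbitrary $g\in G_*^t$, write $\mathbf{x}=\Phi(g)=(\mathbf{x}^{(0)},\dots,\mathbf{x}^{(r)})$ with $\mathbf{x}^{(i)}\in\ker_{G_i}M^{(i)}$ (using condition (ii) for each $\Psi^{(i)}$), and then check directly that $y:=\Psi(g)=P(\Sigma(\mathbf{x}))$ satisfies $My=0$. Concretely, $\Sigma(\mathbf{x})_j=\sum_{i=0}^r\mathbf{x}^{(i)}(j)$ for $j\in[m]$, and $y_\ell=\gcd(B_\ell)\,\Sigma(\mathbf{x})_\ell$ for $\ell\in[r]$, $y_\ell=\Sigma(\mathbf{x})_\ell$ for $\ell>r$. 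The $\ell$-th row of $M=(I_r|B)$ applied to $y$ gives $\gcd(B_\ell)\Sigma(\mathbf{x})_\ell+\sum_{j=r+1}^m B_{\ell,j-r}\Sigma(\mathbf{x})_j$, and since $B_{\ell}=\gcd(B_\ell)B'_\ell$, this equals $\gcd(B_\ell)\big(\Sigma(\mathbf{x})_\ell+\sum_j B'_{\ell,j-r}\Sigma(\mathbf{x})_j\big)=\gcd(B_\ell)\sum_{i=0}^r(M^{(0)}\mathbf{x}^{(i)}$-type expression$)$. The point is that the $\ell$-th equation of $M^{(0)}$ is exactly $\mathbf{x}^{(i)}(\ell)+\sum_j B'_{\ell,j-r}\mathbf{x}^{(i)}(j)=0$ for $i=0$, and for $i\ge 1$ the relevant row of $M^{(i)}$ reads $\mathbf{x}^{(i)}(\ell)+\sum_j B'_{\ell,j-r}\mathbf{x}^{(i)}(j)+\delta_{i\ell}\,\mathbf{x}^{(i)}(m+1)=0$; summing over $i$ and noting that $K$ ignores the $(m+1)$-st coordinates, the $\delta_{i\ell}\mathbf{x}^{(i)}(m+1)$ terms are present in $M^{(i)}\mathbf{x}^{(i)}=0$ but precisely not in $\Sigma$, so one must be slightly careful: in fact $\Sigma(\mathbf{x})_\ell+\sum_j B'_{\ell,j-r}\Sigma(\mathbf{x})_j=\sum_{i=0}^r\big(-\delta_{i\ell}\mathbf{x}^{(i)}(m+1)\big)=-\mathbf{x}^{(\ell)}(m+1)\in G_\ell$, which lies in $G_\ell=\ker(g\mapsto\gcd(B_\ell)g)$, so multiplying by $\gcd(B_\ell)$ kills it. This is exactly why $G_\ell$ was defined as the $\gcd(B_\ell)$-torsion; I would highlight this as the crux of the forward inclusion.

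For the reverse inclusion $\ker_G M\subset\Psi(G_*^t)$, take $x=(x_1,\dots,x_m)\in\ker_G M$, so $x_\ell+\sum_{j=r+1}^m B_{\ell,j-r}x_j=0$ for each $\ell\in[r]$, i.e. $x_\ell=-\gcd(B_\ell)\sum_j B'_{\ell,j-r}x_j$. I need to produce $\mathbf{x}^{(i)}\in\ker_{G_i}M^{(i)}$ with $P(\Sigma(\mathbf{x}))=x$; by surjectivity of each $\Psi^{(i)}$ onto $\ker_{G_i}M^{(i)}$ (condition (ii)) together with surjectivity of $\pi_i^{t_i}$, it then suffices to exhibit the $\mathbf{x}^{(i)}$. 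The natural choice is to put $\mathbf{x}^{(0)}(j)=x_j$ for $j>r$ and $\mathbf{x}^{(0)}(\ell)=-\sum_j B'_{\ell,j-r}x_j$ for $\ell\in[r]$ — this is automatically in $\ker_G M^{(0)}=\ker_{G_0}M^{(0)}$ since $G_0=G$ — and then set all coordinates of $\mathbf{x}^{(i)}$ with $j>r$ equal to $0$, and choose the first $m+1$ coordinates of $\mathbf{x}^{(i)}$ so as to correct the $\ell=i$ coordinate. Precisely, I want $\Sigma(\mathbf{x})_\ell$ scaled by $\gcd(B_\ell)$ to equal $x_\ell$. Since $\gcd(B_\ell)\mathbf{x}^{(0)}(\ell)=-\gcd(B_\ell)\sum_j B'_{\ell,j-r}x_j=x_\ell$ already, I actually want the contributions of $\mathbf{x}^{(i)}$ for $i\ge 1$ to vanish after scaling; the cleanest is to take $\mathbf{x}^{(i)}=0$ for all $i\ge 1$, provided $0\in\ker_{G_i}M^{(i)}$ (it is). Wait — but then the $(m+1)$-st coordinate plays no role and the construction collapses to just $\Psi^{(0)}$. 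Let me reconsider: the issue is whether the zero element really lies in $\ker_{G_i}M^{(i)}$ as a subgroup of $G_i^{m+1}$ — it does, trivially. So in fact surjectivity follows by just using the $i=0$ block, and the other blocks contribute nothing to the image of a single element but are needed for condition (iii'). I would write this up as: given $x\in\ker_G M$, set $\mathbf{x}^{(0)}$ as above, $\mathbf{x}^{(i)}=0_{G_i^{m+1}}$ for $i\in[r]$; then $\Phi(g)=(\mathbf{x}^{(0)},0,\dots,0)$ for a suitable $g$ (lifting through each ${\Psi'}^{(i)}$ and each $\pi_i^{t_i}$), and $K(\Phi(g))=P(\Sigma)=P(\mathbf{x}^{(0)})=x$ by the computation $x_\ell=\gcd(B_\ell)\mathbf{x}^{(0)}(\ell)$ for $\ell\le r$ and $x_j=\mathbf{x}^{(0)}(j)$ for $j>r$.

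The main obstacle, such as it is, is bookkeeping rather than conceptual: keeping straight the index shifts in the block-matrix $\Phi$, the fact that $M^{(i)}$ has $m+1$ columns while $M$ has $m$ and $K$ projects away the extra one, and — most importantly — verifying in the forward inclusion that the "defect" term $\sum_i\delta_{i\ell}\mathbf{x}^{(i)}(m+1)$ lands in the torsion subgroup $G_\ell$ so that multiplication by $\gcd(B_\ell)$ annihilates it. I would isolate that torsion computation as the single substantive point and present the rest as direct substitution. Continuity in the compact case is immediate since $\Phi$, $\Sigma$, $P$ are all continuous (finite sums and integer-multiplications of continuous homomorphisms), so I would dispatch that in one sentence at the end.
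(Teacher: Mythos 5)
Your argument is correct and follows essentially the same route as the paper's proof: for the forward inclusion you factor $\gcd(B_\ell)$ out of the $\ell$-th row of $M$ applied to $P(\Sigma\mathbf{x})$, observe that the blocks $i=0$ and $i\in[r]\setminus\{\ell\}$ contribute zero via the $\ell$-th rows of $M^{(0)}$ and $M^{(i)}$, and kill the remaining $i=\ell$ contribution because it lies in the torsion subgroup $G_\ell$; for the reverse inclusion you use exactly the paper's witness $\mathbf{x}=(\mathbf{x}^{(0)},0,\dots,0)$ together with surjectivity of $\Phi$. One small bookkeeping correction: since the $\ell$-th row of $M^{(\ell)}$ carries $B_\ell$ rather than $B'_\ell$, the defect $\Sigma(\mathbf{x})_\ell+\sum_j B'_{\ell,j-r}\Sigma(\mathbf{x})_j$ equals $\mathbf{x}^{(\ell)}(\ell)+\sum_j B'_{\ell,j-r}\mathbf{x}^{(\ell)}(j)$, not exactly $-\mathbf{x}^{(\ell)}(m+1)$; this is harmless because every coordinate of $\mathbf{x}^{(\ell)}$ lies in the subgroup $G_\ell$, so the defect is still annihilated by $\gcd(B_\ell)$ --- which is precisely the point you correctly isolate as the crux.
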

\begin{proof}
We first check that $\Psi(G_*^t)\subset  \ker_G M$. We have to ensure that for any $g\in G_*^t$, for each $j\in [r]$ we have $M_j(\Psi(g))=0$. Note that $\Psi(g)=K(\mathbf{x})$ for  $\mathbf{x}=\Phi(g)\in   \prod_{i=0}^r \ker_{G_i} M^{(i)} $. Let ${\mathbf{x}'}^{(i)}=p_{[m]}(\mathbf{x}^{(i)})$, for each $i\in [r]$. Observe that, letting $P'$ denote the map on $G^r$ that multiplies  the $i$-th coordinate by $\gcd(B_i)$ for each $i\in [r]$, we have $M\circ P = P'\circ M^{(0)}$. It follows that for each $j\in [r]$ we have
\[
M_j (\Psi(g)) = M_j \circ P ( \Sigma \mathbf{x} )= \gcd(B_j) M_j^{(0)} (\Sigma \mathbf{x}) = \gcd(B_j)  \Big(M_j^{(0)}  {\mathbf{x}}^{(0)}+ \sum_{i=1}^r M_j^{(0)}  {\mathbf{x}'}^{(i)}\Big).
\]
Here we have firstly that $M_j^{(0)}  \mathbf{x}^{(0)}=0_G$, since $\mathbf{x}^{(0)}\in \ker_G M^{(0)}$ by definition of $\Phi$. For $i\in [r]\setminus\{j\}$, since $M_j^{(i)}$ restricted to $[m]$ equals $ M_j^{(0)}$, and the $(m+1)$-st component of $M_j^{(i)}$ is 0, we have $M_j^{(0)} {\mathbf{x}'}^{(i)}=M_j^{(i)} \mathbf{x}^{(i)}=0_G$. Finally, for $i=j$, we have $\gcd(B_j)\, M_j^{(0)} {\mathbf{x}'}^{(j)} =0_G$, since ${\mathbf{x}'}^{(j)}$ has coordinates in $G_j$. We have thus shown that  $M_j(\Psi(g))=0_G$ for each $j\in [r]$, hence $\Psi(g)\in \ker_G M$.

We now show that $\Psi(G_*^t)\supset \ker_G M$. Let $\mathbf{y}=(y_1,y_2,\ldots,y_m)\in \ker_G M$. For each $i\in [r]$, let $x_i= - B_i' \, (y_{r+1},\ldots,y_m)\in G$. Then the element
\[
\mathbf{x}^{(0)}:= (x_1,\ldots , x_r, y_{r+1},\ldots, y_m)
\]
satisfies $M^{(0)} \mathbf{x}^{(0)}=0$. Setting $\mathbf{x}=(\mathbf{x}^{(0)},0_{G^{m+1}},0_{G^{m+1}},\ldots, 0_{G^{m+1}})$, we have $K(\mathbf{x})=P\big(\mathbf{x}^{(0)}\big)=\mathbf{y}$. The map $\Phi$ is surjective (since each map $\Psi^{(i)}$ is), so there exists $g\in G_*^t$ such that $\Phi(g)=\mathbf{x}$, and so $\Psi(g)=K\circ \Phi(g)=\mathbf{y}$ as required.
\end{proof}

We now check condition (iii).
\begin{lemma}\label{lem:cond3final}
The map $\Psi$ in \eqref{eq:genrep} satisfies condition \textup{(iii)} from Definition \ref{defn:Cayley-rep}.
\end{lemma}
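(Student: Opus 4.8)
By Lemma~\ref{lem:equiv-def} it suffices to verify condition (iii') from Definition~\ref{defn:Cayley-rep-var}: fix $j\in[m]$ and $\mathbf{y}=(y_1,\ldots,y_m)\in\ker_G M$, and suppose $g'\in G_*^{C_j}$ satisfies $\psi_{C_j}(g')=y_j$. We must produce $g\in G_*^t$ with $\Psi(g)=\mathbf{y}$ and $p_{C_j}(g)=g'$. Recall that $C_j=C_j(0)\sqcup C_j(1)\sqcup\cdots\sqcup C_j(r)$, so we may write $g'=(g'^{(0)},g'^{(1)},\ldots,g'^{(r)})$ with $g'^{(i)}\in G_*^{C_{j,i}}$, and the $j$-th row of $\Psi$ acts as $\psi_{C_j}(g')=\sum_{i=0}^r (P\circ\Psi^{(i)})_j(g'^{(i)})=\gcd(B_j)\,\psi^{(0)}_{C_{j,0}}(g'^{(0)})+\sum_{i\neq j,\,i\in[r]}\psi^{(i)}_{C_{j,i}}(g'^{(i)})+\gcd(B_j)\,\psi^{(j)}_{C_{j,j}}(g'^{(j)})$, where for $i\ge 1$ I use that row $j$ of $P\circ\Psi^{(i)}$ multiplies by $\gcd(B_j)$ when $j\le r$ and by $1$ otherwise, and that the $(m+1)$-st coordinate of each $M^{(i)}$ does not interfere.

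The strategy is to choose, for each block $i\in[0,r]$ separately, an element $g^{(i)}\in G_*^{t_i}$ with $p_{C_{j,i}}(g^{(i)})=g'^{(i)}$ that is mapped by $\Psi^{(i)}$ to a suitable target $\mathbf{x}^{(i)}\in\ker_{G_i}M^{(i)}$, and then to check that $K$ assembles these into $\mathbf{y}$. The crucial flexibility is the free variable: the map $K=P\circ\Sigma$ ignores the $(m+1)$-st coordinate of each $\mathbf{x}^{(i)}$ for $i\in[r]$, so for those blocks I only need to control the first $m$ coordinates. Concretely, I first pick the target $\mathbf{x}^{(i)}\in\ker_{G_i}M^{(i)}$: its $j$-th coordinate is forced to be $\psi^{(i)}_{C_{j,i}}(g'^{(i)})$ (since any $g^{(i)}$ extending $g'^{(i)}$ must map to something with this $j$-th entry, by condition~(i) for $\Psi^{(i)}$); the $(m+1)$-st coordinate (for $i\ge1$) is then a free parameter in $G_i$ which I use to solve the one linear relation tying the $i$-th row of $M^{(i)}$ together, and the remaining coordinates I choose so that the relation $\sum_{i=0}^r \mathbf{x}'^{(i)}(\ell)$ (after applying $P$) equals $y_\ell$ for every $\ell\in[m]$, where $\mathbf{x}'^{(i)}=p_{[m]}(\mathbf{x}^{(i)})$. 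Once a consistent tuple of targets $\mathbf{x}^{(i)}$ is fixed with $\mathbf{x}^{(i)}_j=\psi^{(i)}_{C_{j,i}}(g'^{(i)})$, I invoke condition~(iii') for each $\Psi^{(i)}$ (which holds since $\Psi^{(i)}$ is a $(t_i,m_i,k_i)$-representation of $(M^{(i)},G_i)$, by Proposition~\ref{prop:simplesystems} together with Lemma~\ref{lem:blowup}) to get $g^{(i)}\in G_*^{t_i}$ with $\Psi^{(i)}(g^{(i)})=\mathbf{x}^{(i)}$ and $p_{C_{j,i}}(g^{(i)})=g'^{(i)}$. Setting $g=(g^{(0)},\ldots,g^{(r)})$ gives $p_{C_j}(g)=g'$ and $\Psi(g)=K(\mathbf{x}^{(0)},\ldots,\mathbf{x}^{(r)})=\mathbf{y}$ by construction.

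The step that needs care is producing a \emph{consistent} tuple of targets: one must check that the linear system "for each $\ell\in[m]$, $P$ applied to $\sum_i \mathbf{x}'^{(i)}(\ell)$ equals $y_\ell$, each $\mathbf{x}^{(i)}\in\ker_{G_i}M^{(i)}$, and $\mathbf{x}^{(i)}_j=\psi^{(i)}_{C_{j,i}}(g'^{(i)})$" actually has a solution. This is where the constraint $\psi_{C_j}(g')=y_j$ is used: it says precisely that the forced $j$-th coordinates sum correctly, i.e. $\gcd(B_j)\mathbf{x}^{(0)}_j+\sum_{i\ge1}\mathbf{x}^{(i)}_j=y_j$ is automatically satisfiable (for $j\le r$; if $j>r$ the multiplier is $1$ and the same holds). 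I expect the argument to split according to whether $j\le r$ or $j>r$, and to use: (a) $\mathbf{x}^{(0)}\in\ker_{G_0}M^{(0)}$ is determined by its last $m-r$ coordinates, which I am free to set (subject only to coordinate $j$ if $j>r$), computing coordinates $1,\ldots,r$ from $\mathbf{x}^{(0)}_i=-B_i'(\mathbf{x}^{(0)}_{r+1},\ldots,\mathbf{x}^{(0)}_m)$; (b) for $i\in[r]$, $\mathbf{x}^{(i)}\in\ker_{G_i}M^{(i)}$ is likewise flexible because $M^{(i)}$ differs from $(I_r|B_{[*]}')$ only in row $i$, where the extra $(m+1)$-st unit entry lets the $i$-th relation be satisfied for \emph{any} choice of the other coordinates in $G_i$. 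Juggling these degrees of freedom so that all of $y_1,\ldots,y_m$ come out right, while respecting the forced $j$-th entries and the subgroup constraints $\mathbf{x}'^{(i)}(\ell)\in G_i$ for appropriate $\ell$, is the only genuinely fiddly part; it is essentially bookkeeping of a triangular linear system and should go through cleanly, treating the case where the fixed index $j$ lies in the "$B$-part" ($j>r$) with slightly more attention since then $g'^{(0)}$ already pins down $\mathbf{x}^{(0)}_j=y_j/\,$(the relevant projection) and one must still be able to choose the remaining free coordinates of $\mathbf{x}^{(0)}$ and the $\mathbf{x}^{(i)}$.
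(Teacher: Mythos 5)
Your overall strategy matches the paper's: decompose $g'$ into blocks $g'^{(i)}$, pick target solutions $\mathbf{x}^{(i)}\in\ker_{G_i}M^{(i)}$ with the forced $j$-th coordinate, use the free $(m+1)$-st variable of $M^{(i)}$ for $i\in[r]$ to make the $i$-th relation satisfiable, invoke condition (iii') for each $\Psi^{(i)}$ (available via Proposition~\ref{prop:simplesystems} and Lemma~\ref{lem:blowup}), and assemble via $K$. The split into $j\le r$ and $j>r$ is also right. However, the proposal has a computational error and, more seriously, defers precisely the nontrivial steps to ``bookkeeping.''

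First, the displayed formula for $\psi_{C_j}(g')$ is wrong. Since $K=P\circ\Sigma$ and $P$ multiplies the $j$-th coordinate of the summed vector, for $j\le r$ the factor $\gcd(B_j)$ applies uniformly to \emph{every} block, i.e.\ $\psi_{C_j}(g')=\gcd(B_j)\sum_{i=0}^r\psi^{(i)}_{C_{j,i}}(g'^{(i)})$; you have it only on the $i=0$ and $i=j$ terms. Because the whole construction pivots on this single equation, this error would propagate.

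Second, and more substantively: the consistency of the target tuple is not a triangular-system triviality. Taking the homogeneous case $\mathbf{y}=0$ (which suffices by Lemma~\ref{lem:equiv-def}), the constraint $\psi_{C_j}(g')=0$ gives only $\gcd(B_j)\sum_i\psi^{(i)}_{C_{j,i}}(g'^{(i)})=0$, \emph{not} that the sum itself vanishes. One must check that the ``deficit'' $x_{\mathrm{def}}:=-\sum_{i\in[0,r]\setminus\{j\}}\psi^{(i)}_{C_{j,i}}(g'^{(i)})$ actually lies in $G_j$ (because $\gcd(B_j)\,G_j=\{0\}$ and $\psi^{(j)}_{C_{j,j}}(g'^{(j)})\in G_j$); this is a genuine structural fact, not bookkeeping, and it is what licenses choosing $\mathbf{x}'^{(j)}\in\ker_{G_j}M^{(0)}$ with $j$-th coordinate $-x_{\mathrm{def}}$ using the coprimality of the rows of $B'$. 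Then, after fixing $\mathbf{x}^{(i)}$ for $i\in[r]$ and determining $\mathbf{x}^{(0)}$ by the cancellation condition $\mathbf{x}^{(0)}|_{[r+1,m]}=-\sum_{i\in[r]}\mathbf{x}^{(i)}|_{[r+1,m]}$, you must verify (the paper's ``key claim'') that the resulting $\mathbf{x}^{(0)}_j$ coincides with the forced value $\psi^{(0)}_{C_{j,0}}(g'^{(0)})$; this requires a nontrivial computation comparing the forced $j$-th row contributions using the specific structure of the $M^{(i)}$ and the definition of $x_{\mathrm{def}}$. Both of these are the heart of the argument, and your proposal leaves them unproved with the assertion that they ``should go through cleanly.'' Without them you do not actually have a proof.
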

\begin{proof}
Fix any $j\in [m]$. From the definitions, it is clear that $\ker_{G_*} \psi_{C_j} \supset p_{C_j}(\ker_{G_*}\Psi)$. To prove the opposite containment, we shall use the fact that, by Lemma \ref{lem:equiv-def}, condition (iii') holds for each map $\Psi^{(i)}$.\\
\indent Recall the notations: $C_{j,i}\subset [t_i]$ is the support of the  $j$-th row of $\Psi^{(i)}$, and $C_j(i)\subset [t]$ for each $i$ is such that $C_j=\bigsqcup_{i=0}^r C_j(i)$ is the support of the $j$-th row of $\Psi$.\\
\indent Suppose that $g' \in G_*^{C_j}$ is given such that $\psi_{C_j}(g') = 0$. We want to find $g\in \ker_{G_*}\Psi$ such that $p_{C_j}(g) = g'$. We identify the groups $G_*^{C_{j,i}}$ and $G_*^{C_j(i)}$ the obvious way via the order-preserving bijection $C_{j,i}\to C_j(i)$.\\
\indent For each $i\in [0,r]$, let $g'^{(i)}\in G_*^{C_{j,i}}$ be the element such that we can identify $g'$ as  
\[
g'=\Big(g'^{(0)},\ldots,g'^{(r)}\Big)\in G_*^{C_{j,0}}\times G_*^{C_{j,1}}\times \cdots \times G_*^{C_{j,r}}.
\]
Our task is to show that there exists $g\in G_*^t$ such $\Psi(g)=0$ and $p_{C_j(i)}(g)= g'^{(i)}$ for each $i$ (modulo the above identification).

\underline{Case 1}: $j\in [r]$. 

For each $i\in [r]\setminus\{j\}$, let $\mathbf{x}^{(i)}$ be an element of $\ker_{G_i} M^{(i)}$ having the right $j$-th coordinate, i.e. such that the $j$-th row of $\Psi^{(i)}$, denoted $\psi_{C_{j,i}}^{(i)}$, satisfies $\psi_{C_{j,i}}^{(i)}(g'^{(i)})=\mathbf{x}^{(i)}_j$. Note that such a solution $\mathbf{x}^{(i)}$ can be obtained just by extending $g'^{(i)}$ arbitrarily to an element $g_0^{(i)}$ of $G_*^{t_i}$, for example by adding 0 coordinates; indeed we then have that $\Psi^{(i)}(g_0^{(i)})=\mathbf{x}^{(i)}$ lies in $\ker_{G_i} M^{(i)}$, as $\Psi^{(i)}$ satisfies condition (ii).\\
\indent Define 
\begin{equation}\label{eq:xdef1}
x_{\text{def}}:= -\psi_{C_{j,0}}^{(0)}\big(g'^{(0)}\big) -\sum_{i\in[r]\setminus\{j\}} \mathbf{x}^{(i)}_j = -\sum_{i\in[0,r]\setminus\{j\}}  \psi_{C_{j,i}}^{(i)}(g'^{(i)}).
\end{equation}
Note the important fact that $x_{\text{def}}\in G_j$. Indeed, letting $d_j=\gcd(B_j)$, and using that $d_j\, \psi_{C_{j,j}}^{(j)}(g'^{(j)})=0$ (since $d_j\, G_j=\{0_G\}$), we have
\[
d_j\; x_{\text{def}}  =  -\sum_{i\in[0,r]\setminus\{j\}} d_j\; \psi_{C_{j,i}}^{(i)}(g'^{(i)}) =  -\sum_{i\in[0,r]} d_j \;\psi_{C_{j,i}}^{(i)}(g'^{(i)}) = - \, \psi_{C_j}(g')=0.
\]
We can therefore find ${\mathbf{x}'}^{(j)}\in \ker_{G_j} M^{(0)}$ such that ${\mathbf{x}'}^{(j)}_j= -x_{\text{def}}$ (using the fact that the rows of $B'$ in $M^{(0)}$ are coprime).\\ 
\indent Define $\mathbf{x}^{(j)}$ to be the element of $ \ker_{G_j} M^{(j)}$ that restricts to ${\mathbf{x}'}^{(j)}$ on its first $m$ coordinates. (Thus $\mathbf{x}^{(j)}$ is ${\mathbf{x}'}^{(j)}$ with an extra $(m+1)$-st coordinate equal to $x_{\text{def}}$.)\\
\indent For each $i\in[r]\setminus\{j\}$, since $\Psi^{(i)}$ is a representation, by condition (iii') from Definition \ref{defn:Cayley-rep-var} we can extend $g'^{(i)}$ to some $g^{(i)}\in G_i^{t_i}$ such that $\Psi^{(i)}(g^{(i)})= \mathbf{x}^{(i)}$.\\
\indent Now we want to extend $g'^{(j)}$ to an element $g^{(j)}$ such that $\Psi^{(j)}(g^{(j)})$ agrees with $\mathbf{x}^{(j)}$ at each of its first $m$ coordinates except perhaps the $j$-th one; equivalently, we want $(\Psi^{(j)}(g^{(j)}))_u= \mathbf{x}^{(j)}_u$ for each $u\in[r+1,m]$. (This implies equality also for $u\in [r]\setminus \{j\}$ since $\Psi^{(j)}(g^{(j)})$ and $\mathbf{x}^{(j)}$ are both in $\ker_{G_j} M^{(j)}$.)\\ 
\indent We can find this element $g^{(j)}$ thanks to the freedom in the $(m+1)$-st variable in the system $(M^{(j)},G_j)$. In other words, we are using condition (iii') for $\Psi^{(j)}$ to extend $g'^{(j)}$, but we are doing so with target-solution the element of $\ker_{G_j}M^{(j)}$ that has $j$-th coordinate $\psi^{(j)}_{C_{j,j}}(g'^{(j)})$ and $u$th coordinate $\mathbf{x}^{(j)}_u$ for $u\in [m]\setminus \{j\}$, and we are using the freedom in the $(m+1)$-st coordinate to claim that such a target-solution exists.\\
\indent We finally come to extending $g'^{(0)}$, and to do so we first have to choose an appropriate element $\mathbf{x}^{(0)}\in \ker_{G_0} M^{(0)}$.  Consider the element $\mathbf{x}^{(0)}$ such that the restriction $\mathbf{x}^{(0)}|_{[r+1,m]}$ of $\mathbf{x}^{(0)}$ to coordinates indexed in $[r+1,m]$ satisfies
\begin{equation}\label{eq:def_sol}
\mathbf{x}^{(0)}|_{[r+1,m]}=-\sum_{i\in[r]} \mathbf{x}^{(i)}|_{[r+1,m]},
\end{equation}
for the $\mathbf{x}^{(i)}$ defined above.\\
\indent The key claim now is that this solution $\mathbf{x}^{(0)}\in \ker_{G_0} M^{(0)}$, determined by \eqref{eq:def_sol}, satisfies $\mathbf{x}^{(0)}_j=\psi^{(0)}_{C_{j,0}}(g'^{(0)})$. If this holds then we may use condition (iii') to obtain the desired extension $g^{(0)}$ of $g'^{(0)}$ such that $\Psi^{(0)}(g^{(0)})=\mathbf{x}^{(0)}$.\\
\indent To prove the claim, note that on one hand by \eqref{eq:xdef1} we have
\begin{equation}\label{eq:fin1}
\psi^{(0)}_{C_{j,0}}(g'^{(0)})  = -  x_{\text{def}} - \sum_{i\in [r]\setminus \{j\}} \mathbf{x}_j^{(i)}.
\end{equation}
On the other hand, letting $B_j^{(0)}$ denote the restriction of the row $M_j^{(0)}$ to the entries indexed by $[r+1,m]$, we deduce from $M_j^{(0)}(\mathbf{x}^{(0)})=0$ that
\[
\mathbf{x}^{(0)}_j =  -B_j^{(0)} \mathbf{x}^{(0)}|_{[r+1,m]} = \sum_{i\in[r]}B_j^{(0)} \mathbf{x}^{(i)}|_{[r+1,m]}=   \sum_{i\in [r]} B_j^{(0)} \big(\mathbf{x}^{(i)}_{r+1},\ldots,\mathbf{x}^{(i)}_m\big).
\]
Here the summand with index $i=j$ is $\mathbf{x}^{(j)}_j=-x_{\text{def}}$, and for each $i\in [r]\setminus \{j\}$ the $i$-th summand is $B_j^{(i)}\big(\mathbf{x}^{(i)}_{r+1},\ldots,\mathbf{x}^{(i)}_m\big)=- \mathbf{x}^{(i)}_j$, since $M^{(i)}$ has same $j$-th row as $M^{(0)}$.
Hence
\begin{equation}\label{eq:fin2}
\mathbf{x}^{(0)}_j  = \;-x_{\text{def}} \; -\; \sum_{i\in [r]\setminus \{j\}} \mathbf{x}^{(i)}_j.
\end{equation}
Combining \eqref{eq:fin1} and \eqref{eq:fin2} we deduce that $\psi^{(0)}_{C_{j,0}}(g'^{(0)}) = \mathbf{x}^{(0)}_j$ as claimed. 

We have thus obtained $g=\big(g^{(0)},g^{(1)},\ldots, g^{(r)}\big)$ such that $p_{C_j}(g)= g'$, and such that for each $u\in [r+1,m]$ we have
$\Psi_u(g)=\sum_{i\in [0,r]} \Psi_u^{(i)}\big(g^{(i)}\big) = \sum_{i\in [0,r]} \mathbf{x}_u^{(i)} = 0$, 
by definition of $\mathbf{x}^{(0)}$. Since an element of $\ker_G M$ is determined by its last $m-r$ coordinates, we must have $\Psi(g)=0$. This completes Case 1.

\underline{Case 2}: $j\in[r+1,m]$.

The argument is similar but simpler. Suppose that we are given 
\[
g'=\big({g'}^{(0)},\ldots, {g'}^{(r)}\big)\in G_*^{C_{j,0}}\times\cdots\times G_*^{C_{j,r}}
\]
such that $\sum_{i\in [0,r]}\psi_{C_{j,i}}^{(i)}\big({g'}^{(i)}\big)=0$. For each $i\in [r]$, let $g^{(i)}\in G_*^{t_i}$ be any element satisfying $p_{C_{j,i}}(g^{(i)})={g'}^{(i)}$ (e.g. obtained by extending ${g'}^{(i)}$ by $0$-coordinates) and let $\mathbf{x}^{(i)}=\Psi^{(i)}\big(g^{(i)}\big)\in \ker_{G_i} M^{(i)}$. (Note that $\psi_{C_{j,i}}^{(i)}\big({g'}^{(i)}\big)=\mathbf{x}_j^{(i)}$.) For each $i\in [r]$, we define
\[
\mathbf{y}^{(i)}= \Big(\,-\mathbf{x}^{(i)}_1,\ldots,-\mathbf{x}^{(i)}_{i-1},B_i'\big(\mathbf{x}^{(i)}_{r+1},\ldots, \mathbf{x}^{(i)}_m \big) ,-\mathbf{x}^{(i)}_{i+1},\ldots, -\mathbf{x}^{(i)}_m\, \Big).
\]
Note that $\mathbf{y}^{(i)}\in \ker_G M^{(0)}$. Therefore, the element $\mathbf{x}^{(0)}:= \sum_{i\in [r]} \mathbf{y}^{(i)}$ lies in $\ker_G M^{(0)}$. We also have \[
\mathbf{x}^{(0)}_j = - \sum_{i\in [r]} \mathbf{x}^{(i)}_j= -\sum_{i\in [r]} \psi^{(i)}_{C_{j,i}}({g'}^{(i)})=\psi^{(0)}_{C_{j,0}}({g'}^{(0)}),
\]
where the first equality follows from the definition of the $\mathbf{y}^{(i)}$, the second from the definition of the $\mathbf{x}^{(i)}$, and the third equality follows by assumption on $g'$. By condition (iii'),  there exists $g^{(0)}$ extending ${g'}^{(0)}$ such that $\Psi^{(0)}(g^{(0)})=\mathbf{x}^{(0)}$. The element $g=(g^{(0)},\ldots, g^{(r)})$ that we have thus obtained extends $g'$, and for each $u\in [r+1,m]$ we have $\Psi_u(g)=\sum_{i\in [0,r]} \Psi_u^{(i)}\big(g^{(i)}\big) = \sum_{i\in [0,r]} \mathbf{x}_u^{(i)} = 0$, so just like in the previous case we must have $\Psi(g)=0$. This completes Case 2.
\end{proof}

The proof of Proposition \ref{prop:genrep} is now complete.\\

\noindent It remains only to address the second case described at the beginning of this subsection, namely that of matrices of the form $(I_r|B)\in \Z^{r\times (r+1)}$. We shall do so by using a matrix extension  slightly different from those used in earlier sections. This replaces the given matrix by an $(r+1)\times (r+3)$ matrix. We show that this extension also conserves hypergraph-representability, so that we can then just apply Proposition \ref{prop:genrep} to the latter matrix.

\begin{lemma}
Let $M=(I_r|B)\in \Z^{r\times (r+1)}$. Let
\[
M'=\left(\begin{array}{c|c}
\begin{array}{c}
	I_{r+1}
	\end{array}         & 
	\begin{array}{cc}
		B  & \mathbf{0}\\
          0  &   -1      \\
\end{array}
\end{array}\right)\in \Z^{(r+1)\times (r+3)}.
\]
Suppose that $\Psi'$ is a $(t,r+3,k)$-representation for $M'$, and let $J=[r+2]\setminus \{r+1\}$. Then $\Psi:= \pi_J\circ \Psi'$ is a $(t,r+1,k)$-representation for $M$.
\end{lemma}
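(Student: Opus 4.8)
The plan is to follow the proof of Lemma \ref{lem:key4ext} almost verbatim; the only substantive difference is that $\pi_J$ restricted to $\ker_G M'$ is now merely \emph{surjective} onto $\ker_G M$, not an isomorphism, so we cannot appeal to Definition \ref{defn:sysext} and must establish this surjectivity directly. Write $\sigma_J:[r+1]\to J$ for the order-preserving bijection, so $\sigma_J(j)=j$ for $j\le r$ and $\sigma_J(r+1)=r+2$. Exactly as in Lemma \ref{lem:key4ext}, $\pi_J$ is left-multiplication by the $(r+1)\times(r+3)$ matrix whose $j$-th row has a single $1$ in column $\sigma_J(j)$, so $\Psi=\pi_J\circ\Psi'$ is the homomorphism matrix whose $j$-th row is the $\sigma_J(j)$-th row of $\Psi'$. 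Consequently $\psi_j=\psi'_{\sigma_J(j)}$ and, with $C_j:=C'_{\sigma_J(j)}$, $\psi_{C_j}=\psi'_{C'_{\sigma_J(j)}}$; the $r+1$ sets $C_j$ are distinct (a subfamily of the distinct color-classes of $\Psi'$), so condition (i) of Definition \ref{defn:Cayley-rep} is inherited from $\Psi'$, and in the compact case $\Psi$ is continuous because $\pi_J$ is.

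The only genuinely new point is condition (ii), i.e. that $\pi_J(\ker_G M')=\ker_G M$; this is the step where the shape of $M'$ matters. Reading off $M'$, a tuple $x\in G^{r+3}$ lies in $\ker_G M'$ exactly when $x_i=-B_i x_{r+2}$ for $i\in[r]$ and $x_{r+1}=x_{r+3}$; thus $x_{r+2}$ is a free parameter and $x_{r+1}=x_{r+3}$ is a second free parameter which the projection $\pi_J(x)=(x_1,\dots,x_r,x_{r+2})$ does not see. Hence $\pi_J(\ker_G M')=\{(-B_1t,\dots,-B_rt,t):t\in G\}=\ker_G M$, so $\Psi(G_*^t)=\pi_J(\Psi'(G_*^t))=\pi_J(\ker_G M')=\ker_G M$. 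This extra degree of freedom is what the construction is designed to provide: $M'$ has $r+3=(r+1)+2$ columns (so that the earlier cases of Proposition \ref{prop:diagformrep} apply to it, supplying the $\Psi'$ of the hypothesis), while $\ker_G M'$ still maps onto $\ker_G M$.

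Condition (iii) then follows as in Lemma \ref{lem:key4ext}: $p_{C_j}(\ker_{G_*}\Psi)\subset\ker_{G_*}\psi_{C_j}$ is immediate from (i), and the reverse inclusion uses $\ker_{G_*}\Psi'\subset\ker_{G_*}\Psi$ (valid since $\Psi=\pi_J\circ\Psi'$) together with condition (iii) for $\Psi'$ at the index $\sigma_J(j)$. Alternatively one can verify condition (iii') of Definition \ref{defn:Cayley-rep-var}: given $x\in\ker_G M$ and $g'\in G_*^{C_j}$ with $\psi_{C_j}(g')=x_j$, pick (by the surjectivity just proved) $x'\in\ker_G M'$ with $\pi_J(x')=x$; then automatically $x'_{\sigma_J(j)}=x_j=\psi'_{C'_{\sigma_J(j)}}(g')$, so condition (iii') for $\Psi'$ gives $g\in G_*^t$ with $\Psi'(g)=x'$ and $p_{C_j}(g)=g'$, whence $\Psi(g)=\pi_J(x')=x$. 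I do not anticipate a real obstacle here; the only things to be careful about are the relabelling by $\sigma_J$ and the observation that the two adjoined coordinates of $M'$ are unconstrained apart from being equal.
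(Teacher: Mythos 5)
Your proposal is correct and follows essentially the same route as the paper: condition (i) and continuity are inherited, condition (ii) is verified directly from the block shape of $M'$ (the paper explicitly constructs the preimage $x'=(x_1,\dots,x_r,0,x_{r+1},0)$ of a given $x\in\ker_G M$, which is equivalent to your parametrization of $\ker_G M'$ by the two free coordinates $x_{r+2}$ and $x_{r+1}=x_{r+3}$), and condition (iii) comes from condition (iii) for $\Psi'$ at index $\sigma_J(j)$ together with $\ker_{G_*}\Psi'\subset\ker_{G_*}\Psi$. Your first argument for (iii) is in fact marginally tighter than the paper's, which passes through the intermediate claim $p_{C_j}(\ker_{G_*}\Psi')=p_{C_j}(\ker_{G_*}\Psi)$ and invokes (iii') to prove the ``$\supset$'' inclusion there; the ``$\subset$'' inclusion together with condition (iii) for $\Psi'$ already suffices, as you observe. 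Your alternative derivation via (iii') of Definition \ref{defn:Cayley-rep-var} is also correct and pleasantly bundles (ii) and (iii') into a single surjectivity argument.
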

\begin{proof}
Condition (i) from Definition \ref{defn:Cayley-rep} is clearly inherited  by $\Psi$ from $\Psi'$.\\
\indent For condition (ii), note that $\Psi'(G_*^t)=\ker_G M'$ by assumption. Thus for any $x'\in \Psi'(G_*^t)$ we have $M' x'=0_{G^{r+3}}$, which implies by construction that $x:=\pi_J(x')$ satisfies $Mx=0_{G^{r+1}}$. Hence $\Psi(G_*^t)\subset \ker_G M$. To see the opposite containment, let $x\in \ker_G M$, and note that $x':=(x_1,x_2,\ldots,x_r,0,x_{r+1},0)$ lies in $\ker_G M'$, so there exists $g\in G_*^t$ such that $\Psi'(g)=x'$ and so $\Psi(g)=\pi_J(x')=x$.\\
\indent To check condition (iii), fix any $j\in [r+1]$ and note that by  assumption we have $p_{C'_{\sigma_J(j)}}(\ker_{G_*}\Psi') =\ker_{G_*}\psi'_{C'_{\sigma_J(j)}}$. By construction, the $j$-th row of $\Psi$ has support $C_j=C'_{\sigma_J(j)}$, where the latter is the support of the $\sigma_J(j)$th row of $\Psi'$. We also have that the corresponding maps $\psi_{C_j}, \psi'_{C'_{\sigma_J(j)}}$ are equal, whence $p_{C_j}(\ker_{G_*}\Psi') =\ker_{G_*}\psi_{C_j}$. Therefore it suffices to check that $p_{C_j}(\ker_{G_*}\Psi')=p_{C_j}(\ker_{G_*}\Psi)$.\\ 
\indent The rows of $\Psi$ form a subset of those of $\Psi'$, so we certainly have $\ker_{G_*}\Psi'\subset \ker_{G_*}\Psi$ and so $p_{C_j}(\ker_{G_*}\Psi')\subset p_{C_j}(\ker_{G_*}\Psi)$. For the opposite containment, suppose that $g'\in G_*^{C_j}$ equals $p_{C_j}(g_0)$ for some $g_0\in \ker_{G_*} \Psi$, so in particular $\psi_{C_j}(g')=0$. Applying (iii') from Definition \ref{defn:Cayley-rep-var} to $\Psi'$, with $x=0_{G^{r+3}}$ and $g'$ satisfying $\psi_{C'_{\sigma(j)}}(g')=x_{\sigma(j)}=0$, we obtain that there exists $g\in G_*^t$ such that $\Psi'(g)=0_{G^{r+3}}$ and $p_{C_j}(g)=p_{C'_{\sigma(j)}}(g)=g'$.
\end{proof}

\section{Remarks}\label{section:Remarks}

There are several ways in which one could try to extend Theorem \ref{thm:cag-rem-lem} further.

To begin with, one may want to remove the assumption $d_r(M)=1$. To achieve this, the arguments in this paper would have to be modified in a non-trivial way, especially those in Section \ref{section:FindRep}, starting with Lemma \ref{lem:firstext}, and including the proofs of Lemmas \ref{lem:cond2final} and \ref{lem:cond3final}.\\
\indent One may also want to bring Theorem \ref{thm:cag-rem-lem} more in line with the $\Z_p$ version (Theorem \ref{thm:KSVremlem}) by making sure that the parameter $\delta$ depends only on the dimensions $m,r$ of the matrix $M$ and not on the entries themselves. Note that the function $\delta$ in Theorem \ref{thm:cag-rem-lem} is currently not guaranteed to be independent of the entries of $M$, because of the argument involving Euclid's algorithm in the proof of Lemma \ref{lem:adapt11}. One would therefore need at least to modify Lemma \ref{lem:adapt11}.\\
\indent Thus, obtaining the above two improvements of Theorem \ref{thm:cag-rem-lem} via our approach requires handling  several technical difficulties of a purely algebraic nature, and we have therefore preferred not to pursue these matters in this paper.

One may also seek extensions of these removal results to noncommutative settings. It seems plausible, for instance, that there is an analogue of Theorem \ref{thm:cag-rem-lem} for nilmanifolds. One possible such result would say, roughly speaking, that if a product of measurable subsets of a nilmanifold $G/\Gamma$ has an intersection of small-measure with the so-called `Leibman nilmanifold' associated with a system of linear forms (see \cite[\S 3]{G&T}), then this intersection can be eliminated by removing small-measure subsets from the given sets. 

\begin{appendix}

\section{Reduction of the main theorem} \label{app:A}

In this appendix we show that Theorem \ref{thm:cag-rem-lem+} implies Theorem \ref{thm:cag-rem-lem}. In fact, as we shall see, it is not hard to establish the following stronger reduction.

\begin{proposition}\label{prop:3.1_implies_1.3}
If Theorem \ref{thm:cag-rem-lem+} holds for every compact abelian Lie group, then Theorem \ref{thm:cag-rem-lem} holds.
\end{proposition}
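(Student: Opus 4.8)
The plan is to realize an arbitrary compact Hausdorff abelian group $G$ as an inverse limit of compact abelian Lie groups, push the hypothesis of Theorem~\ref{thm:cag-rem-lem} down to a single Lie quotient where Theorem~\ref{thm:cag-rem-lem+} is available by assumption, and then lift the resulting removal sets back up to $G$.

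First I would invoke the standard structural fact (a consequence of Pontryagin duality, since $\widehat G=\varinjlim_\Lambda\Lambda$ over the directed set $I$ of finitely generated subgroups $\Lambda\le\widehat G$) that $G=\varprojlim_\Lambda G_\Lambda$, where $G_\Lambda:=\widehat\Lambda$ is a compact abelian Lie group, the canonical map $\pi_\Lambda\colon G\to G_\Lambda$ is a continuous surjective homomorphism, hence measure-preserving ($\mu_G\circ\pi_\Lambda^{-1}=\mu_{G_\Lambda}$), and $\bigcap_\Lambda\ker\pi_\Lambda=\{0\}$. Given $M$ with $d_r(M)=1$ and $\epsilon>0$, I would set $\delta:=\tfrac12\,\delta_0(\epsilon/2,M)$, where $\delta_0$ is the (group-independent) function furnished by Theorem~\ref{thm:cag-rem-lem+}, which by hypothesis applies to compact abelian Lie groups; this $\delta$ will be the constant claimed in Theorem~\ref{thm:cag-rem-lem}. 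For a given $G$ and Borel sets $A_1,\dots,A_m$ with solution density at most $\delta$, I would first replace $A_j$ by $A_j\cap G^{(j)}$, where $G^{(j)}:=p_j(\ker_G M)$ is a closed subgroup, which changes neither the hypothesis nor the conclusion, so we may assume $A_j\subset G^{(j)}$.

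The key preliminary step is the compatibility of the kernels with the inverse system: using the Smith normal form of $M$ together with $d_r(M)=1$, one has an integer-matrix isomorphism $H^{m-r}\xrightarrow{\ \sim\ }\ker_H M$ natural in the abelian group $H$, and hence $\pi_\Lambda^{\times m}$ restricts to a measure-preserving surjective homomorphism $\ker_G M\twoheadrightarrow\ker_{G_\Lambda}M$, whence in particular $p_j(\ker_{G_\Lambda}M)=\pi_\Lambda(G^{(j)})=:G_\Lambda^{(j)}$. Next, since every character of the closed subgroup $G^{(j)}$ extends to one of $G$ and therefore factors through some $\pi_\Lambda$, the Peter--Weyl theorem shows that the directed family of sub-$\sigma$-algebras $(\pi_\Lambda|_{G^{(j)}})^{-1}\mathcal{B}_{G_\Lambda^{(j)}}$ has union dense in $L^1(G^{(j)},\mu_{G^{(j)}})$; so, for any $\eta>0$, directedness of $I$ lets me choose a single $\Lambda\in I$ and Borel sets $A_j^\Lambda\subset G_\Lambda^{(j)}$ with $\mu_{G^{(j)}}\big((A_j\,\triangle\,\pi_\Lambda^{-1}(A_j^\Lambda))\cap G^{(j)}\big)<\eta$ for all $j$, and I would fix $\eta:=\tfrac12\min\{\epsilon,\delta_0/m\}$. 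Then, using $\mu_{\ker_G M}(\{x:x_j\in E\})=\mu_{G^{(j)}}(E\cap G^{(j)})$ (as $p_j\colon\ker_G M\to G^{(j)}$ is measure-preserving) and a one-coordinate-at-a-time telescoping estimate, the density of $\textstyle\prod_j\pi_\Lambda^{-1}(A_j^\Lambda)$ in $\ker_G M$ is at most $\delta+m\eta<\delta_0$, and this equals the density of $\textstyle\prod_j A_j^\Lambda$ in $\ker_{G_\Lambda}M$ by the measure-preserving surjection $\pi_\Lambda^{\times m}$. Applying Theorem~\ref{thm:cag-rem-lem+} to $G_\Lambda$, $M$, the $A_j^\Lambda$ and the parameter $\epsilon/2$ produces Borel $R_j^\Lambda\subset A_j^\Lambda\cap G_\Lambda^{(j)}$ with $\mu_{G_\Lambda^{(j)}}(R_j^\Lambda)\le\epsilon/2$ and $\big(\textstyle\prod_j A_j^\Lambda\setminus R_j^\Lambda\big)\cap\ker_{G_\Lambda}M=\emptyset$. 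I would then set $R_j:=\big(A_j\setminus\pi_\Lambda^{-1}(A_j^\Lambda)\big)\cup\big(A_j\cap\pi_\Lambda^{-1}(R_j^\Lambda)\big)$ and check that $\mu_G(R_j)\le\eta+\mu_{G_\Lambda}(R_j^\Lambda)\le\eta+\mu_{G_\Lambda^{(j)}}(R_j^\Lambda)\le\epsilon$ (the first term being $0$ when $[G:G^{(j)}]=\infty$, since then $\mu_G(G^{(j)})=0$, and bounded by $\eta/[G:G^{(j)}]$ otherwise), and that $\big(\textstyle\prod_j A_j\setminus R_j\big)\cap\ker_G M=\emptyset$: any $x\in\ker_G M$ with all $x_j\in A_j\setminus R_j$ would have $\pi_\Lambda(x_j)\in A_j^\Lambda\setminus R_j^\Lambda$ for every $j$ while $\pi_\Lambda^{\times m}(x)\in\ker_{G_\Lambda}M$, contradicting the removal property downstairs.

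I expect the main obstacle to be the bookkeeping with the two relevant measures: the solution density naturally lives against $\mu_{G^{(j)}}$ (which is also where the $L^1$-approximations must take place, because the density is insensitive to $\mu_G$-null subgroups $G^{(j)}$), whereas the removal sets $R_j$ must be controlled in the ambient $\mu_G$; reconciling the two uses $\mu_{G^{(j)}}=[G:G^{(j)}]\cdot\mu_G|_{G^{(j)}}$ in the finite-index case and $\mu_G|_{G^{(j)}}=0$ otherwise, together with the fact that the transition maps $\pi_\Lambda^{\times m}$ stay \emph{surjective} onto $\ker_{G_\Lambda}M$ and measure-preserving — which is exactly where the hypothesis $d_r(M)=1$, via the natural identification $\ker_H M\cong H^{m-r}$, is indispensable. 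The remaining ingredients — the inverse-limit structure of $G$, Peter--Weyl approximation, and the fact that a surjective continuous homomorphism of compact groups pushes Haar measure to Haar measure — are standard.
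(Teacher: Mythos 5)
Your proposal is correct and follows essentially the same route as the paper's proof: approximate the $A_j$ by pullbacks of Borel sets from a Lie quotient, transfer the density hypothesis downstairs via the measure-preserving surjection $\ker_G M\twoheadrightarrow\ker_{G_\Lambda}M$ (which is where $d_r(M)=1$ enters, exactly as in the paper's Lemma~\ref{lem:basic1}), apply Theorem~\ref{thm:cag-rem-lem+} in the Lie quotient, and pull the removal sets back up. The only notable differences are presentational: you invoke the inverse-limit structure of $G$ together with Peter--Weyl density of $\bigcup_\Lambda(\pi_\Lambda|_{G^{(j)}})^{-1}\mathcal B_{G_\Lambda^{(j)}}$ in place of the paper's explicit Lusin/Stone--Weierstrass construction (Lemma~\ref{lem:Cagapprox}), and by working directly with the subgroup measures $\mu_{G^{(j)}}$ you sidestep the paper's preliminary reduction to finite-index $G^{(j)}$ and the attendant $\kappa_j$-bookkeeping of Lemma~\ref{lem:basic2}, which is a mild simplification.
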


As is well-known, every compact Hausdorff abelian group $G$ is a strict projective limit of compact abelian Lie groups (see \cite[Rem. 2.35, Cor. 2.43]{H&M}). We shall use this to prove Proposition \ref{prop:3.1_implies_1.3}, by approximating the given Borel sets $A_j\subset G$ in Theorem \ref{thm:cag-rem-lem} by Borel subsets coming from a Lie quotient of $G$. More precisely, we use the following approximation result.

\begin{lemma}\label{lem:Cagapprox}
Let $G$ be a compact abelian group, let $A$ be a Borel  subset of $G$, and let $0<\delta < 1$. There exists a compact abelian Lie group $G_0$, a continuous surjective homomorphism $q: G\to G_0$, and a Borel set $A_0\subset G_0$, such that $\mu_G\big(A\, \Delta\, (q^{-1}A_0) \big) < \delta$.
\end{lemma}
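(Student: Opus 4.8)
The plan is to exploit the fact that $G$ is the projective (inverse) limit of its compact abelian Lie quotients, together with the corresponding convergence of the Haar measures and the associated conditional expectations. Write $G=\varprojlim G/N$, where $N$ ranges over the directed family $\mathcal{N}$ of closed subgroups of $G$ such that $G/N$ is a Lie group, and let $q_N:G\to G/N$ be the quotient maps, which are continuous and surjective and push $\mu_G$ forward to the Haar measure $\mu_{G/N}$. For each $N$, let $\mathcal{B}_N=q_N^{-1}(\mathcal{B}_{G/N})$ be the sub-$\sigma$-algebra of $\mathcal{B}_G$ of $N$-invariant Borel sets; these form an increasing net whose union generates $\mathcal{B}_G$ (modulo $\mu_G$-null sets), since the quotient maps separate points of $G$ and $G$ is second countable in each relevant context (or, without second countability, one uses that the $q_N$ separate points and a standard $\pi$-$\lambda$/monotone class argument to show $\sigma(\bigcup_N\mathcal{B}_N)$ contains all Borel sets up to null sets).

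First I would fix the Borel set $A\subset G$ and apply the martingale convergence theorem (or its net version, e.g.\ a reverse-martingale-type argument along the directed set $\mathcal{N}$) to the bounded function $1_A\in L^1(G,\mathcal{B}_G,\mu_G)$: the conditional expectations $\mathbb{E}(1_A\mid\mathcal{B}_N)$ converge to $1_A$ in $L^1(\mu_G)$ as $N$ shrinks along $\mathcal{N}$. Hence, given $\delta>0$, there is some $N$ with $\|\mathbb{E}(1_A\mid\mathcal{B}_N)-1_A\|_{L^1(\mu_G)}<\delta/2$. Since $\mathbb{E}(1_A\mid\mathcal{B}_N)$ is $\mathcal{B}_N$-measurable, it equals $f\circ q_N$ for some Borel function $f:G/N\to[0,1]$ with $\int_{G/N}f\,d\mu_{G/N}=\mu_G(A)$. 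Then I would set $A_0:=\{y\in G/N: f(y)\ge 1/2\}$, a Borel subset of the Lie group $G_0:=G/N$, and take $q:=q_N$. It remains to bound $\mu_G(A\,\Delta\,q^{-1}A_0)$. On the set where $1_A$ and $1_{A_0}\circ q_N$ disagree, the function $f\circ q_N$ differs from $1_A$ by at least $1/2$ in absolute value: indeed if $x\in A$ but $q_N(x)\notin A_0$ then $f(q_N(x))<1/2=1-1/2\le 1_A(x)-1/2$, and symmetrically if $x\notin A$ but $q_N(x)\in A_0$ then $f(q_N(x))\ge 1/2=1_A(x)+1/2$. By Markov's inequality applied to $|1_A-f\circ q_N|$, we get $\mu_G(A\,\Delta\,q_N^{-1}A_0)\le 2\|1_A-\mathbb{E}(1_A\mid\mathcal{B}_N)\|_{L^1(\mu_G)}<\delta$, as required.

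The one point that needs a little care — and is the closest thing to an obstacle — is the convergence statement $\mathbb{E}(1_A\mid\mathcal{B}_N)\to 1_A$ along the \emph{net} $\mathcal{N}$ rather than along a sequence. If $G$ is second countable one may pass to a cofinal sequence $N_1\supset N_2\supset\cdots$ with $\bigcap_n N_n=\{0\}$ and apply the ordinary (reverse) martingale convergence theorem; since later we only apply Lemma~\ref{lem:Cagapprox} inside the proof of Proposition~\ref{prop:3.1_implies_1.3} and we are free to reduce to a second-countable situation (each $A_j$ and the relevant group data involve only countably many characters, hence a second-countable quotient), this suffices. Alternatively, for a general compact abelian group one invokes the fact that $L^1(G,\mathcal{B}_G,\mu_G)$ is the closure of $\bigcup_N L^1(G,\mathcal{B}_N,\mu_G)$ (because the $q_N$ separate points), so that $1_A$ can be approximated in $L^1$ by some $\mathcal{B}_N$-measurable $g$; replacing $g$ by $\mathbb{E}(1_A\mid\mathcal{B}_N)$ only improves the approximation by the contractivity of conditional expectation, and then one proceeds exactly as above with the thresholding at $1/2$. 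Either way the argument is short once the density of $\bigcup_N\mathcal{B}_N$ in $\mathcal{B}_G$ is in hand.
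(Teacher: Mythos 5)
Your proof is correct, and it takes a genuinely different route from the paper's. The paper proceeds very concretely: it uses Lusin's theorem to approximate $1_A$ uniformly-on-most-of-$G$ by a continuous $h$, then Stone--Weierstrass to replace $h$ by a trigonometric polynomial $P$; the (finite) spectrum of $P$ generates a finitely generated subgroup of $\Ghat$ whose dual is the desired Lie quotient $G_0$, so $P$ descends to $P_0$ on $G_0$; finally it thresholds $P_0$ by a somewhat delicate argument involving $\|P_0-P_0^2\|_{L^1}$ and Markov's inequality. You instead appeal to the density of $\bigcup_N L^1(G,\cB_N,\mu_G)$ in $L^1(G)$ (equivalently, of the algebra of $N$-invariant sets in the measure algebra of $G$), project $1_A$ onto some $\cB_N$ via conditional expectation, and threshold at $1/2$ — which gives the clean bound $\mu_G(A\,\Delta\, q_N^{-1}A_0)\le 2\|1_A-\E(1_A\mid\cB_N)\|_{L^1}$ with no need to control a product $P_0-P_0^2$. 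The two arguments are not as different as they look under the hood: the cleanest way to establish your density fact is precisely via Stone--Weierstrass (trigonometric polynomials are dense and each factors through a Lie quotient), and the contractivity of conditional expectation then lets you bypass Lusin's theorem entirely, since $\E(1_A\mid\cB_N)$ is automatically at least as good as any $\cB_N$-measurable approximant. What your approach buys is a noticeably shorter and more conceptual thresholding step and the ability to sidestep the $\|P\|_\infty<2$ bookkeeping; what the paper's approach buys is that it is entirely self-contained in elementary Fourier analysis and does not invoke martingale convergence or the abstract density of the invariant measure algebra. One small caution: when you invoke the net/reverse-martingale convergence you correctly note that one can either pass to a cofinal sequence (in the second-countable case, which suffices for the application) or argue directly from density plus contractivity; the latter is the cleaner path and avoids any worry about almost-everywhere convergence along nets, which you correctly only claim in $L^1$.
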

\begin{proof}
By Lusin's theorem there exists a continuous function $h$ on $G$ with $\|h\|_{L^\infty(G)}\leq 1$ such that $\|h-1_A\|_{L^1(G)}< \delta^3/2^{10}$; see \cite[Appendix E8]{Rud2}. By the Stone-Weierstrass theorem, the trigonometric polynomials are dense in the set of continuous functions on $G$, relative to the $L^\infty(G)$-norm; see \cite[p. 24]{Rud2}. Thus there exists a trigonometric polynomial $P(x)$ such that $\|h-P\|_{L^\infty(G)}<\delta^3/2^{10}$, whence $\| 1_A-P\|_{L^1(G)} < \delta^3/2^9$. We also have $\|P\|_{L^\infty(G)}< \|h\|_{L^\infty(G)}+ \delta^3/2^{10}< 2$, and by taking real parts we can also suppose that $P$ is real-valued.\\
\indent Let $\widehat G$ be the dual group of $G$ and let $\widehat{G_0}$ be the subgroup of $\widehat G$ generated by the spectrum of $P$, i.e. by the finite set $\{\gamma\in\widehat G : \widehat P(\gamma)\neq 0\}$. Then $\widehat G_0$ is a finitely generated (discrete) abelian group, and is thus the dual of a compact abelian Lie group $G_0$. Letting $\Lambda$ denote the annihilator of $\widehat G_0$ ($\Lambda$ is a closed subgroup of $G$), we have that $G_0$ is isomorphic as a compact abelian group to $G/\Lambda$ (see \cite[\S 2.1]{Rud2}),  and so the quotient map $G\to G/\Lambda$ gives a continuous surjective homomorphism $q :G\to G_0$. There exists a trigonometric polynomial $P_0$ on $G_0$ with $P= P_0\circ q$. We then have $\|P_0\|_{L^\infty(G_0)}\leq 2$. Moreover,
\[
\Big\|P_0-P_0^2\Big\|_{L^1(G_0)} = \Big\|P-P^2\Big\|_{L^1(G)}\leq \int_G |1_A-P|\ud\mu_G+ \int_G |1_A-P|\,|1_A+P| \ud \mu_G < \delta^3/2^7.
\]
This implies that the set $D=\{x\in G_0:|P_0(x)-P_0^2(x)|  >\delta^2/2^4 \}$ has measure at most $\delta/8$. On the complement $D^c=G\setminus D$, we must have $|P_0(x)|\leq \delta/4$ or $|1-P_0(x)|\leq \delta/4$. Now let $A_0=\{x\in G_0: |P_0(x)-1|\leq \delta/4\}$. We have that $\|1_{A_0}-P_0 \|_{L^1(G_0)} $ is at most
\[
3 \int_{G_0} 1_{D}\ud\mu_{G_0} +  \int_{G_0} 1_{A_0\cap D^c}(x)|1-P_0(x)| +1_{A_0^c\cap D^c}(x) |P_0(x)| \ud\mu_{G_0} < 7\delta/8.
\]
Hence $\mu_G\big(A\,\Delta\, (q^{-1}A_0)\big) \leq \| 1_A-P \|_{L^1(G)} + \| P\,- \,1_{A_0}\circ q \,\|_{L^1(G)} 
= \| 1_A-P\|_{L^1(G)} + \| P_0-1_{A_0} \|_{L^1(G_0)}< \delta$.
\end{proof}
By iterating the main argument in this proof we can simultaneously approximate any finite number of Borel sets $A_1,A_2,\ldots, A_m\subset G$, that is we can find a single Lie group $G_0$ in which there are Borel sets $A_{j,0}$ such that $\mu_G\big(A_j\, \Delta\, (q^{-1}A_{j,0}) \big) < \delta$ for every $j\in [m]$.\\
\indent We shall also use the following basic fact.

\begin{lemma}\label{lem:basic1}
Let $M\in \Z^{r\times m}$ satisfy $d_r(M)=1$. Then for any abelian group $G$, and any surjective homomorphism $\theta:G\to H$, the homomorphism $\theta^m:G^m\to H^m, x\mapsto (\theta(x_1),\ldots, \theta(x_m))$ is surjective from $\ker_G M$ to $\ker_H M$.
\end{lemma}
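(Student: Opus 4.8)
The plan is to use the Smith normal form of $M$ to turn the surjectivity claim into a trivial lifting problem. First I would record the (easy) fact that $\theta^m$ maps $\ker_G M$ into $\ker_H M$: since $\theta$ is a group homomorphism it commutes with multiplication by integers, hence with the action of any integer matrix, so $M\circ\theta^m=\theta^r\circ M$ as maps $G^m\to H^r$. Consequently, if $x\in\ker_G M$ then $M\big(\theta^m(x)\big)=\theta^r(0)=0$, i.e.\ $\theta^m(x)\in\ker_H M$.

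For the surjectivity onto $\ker_H M$, I would first note that $d_r(M)=1$ forces every invariant factor of $M$ to equal $1$: the invariant factors are positive integers $s_1\mid s_2\mid\cdots\mid s_r$ whose product is $d_r(M)=1$. Hence the Smith normal form reads $M=V(I_r\mid 0)W$ with $V\in\Z^{r\times r}$ and $W\in\Z^{m\times m}$ unimodular, exactly as used already in the proof of Lemma~\ref{lem:plaincase}. Both $V$ and $W$, being unimodular, induce automorphisms of the relevant direct powers of any abelian group, and these automorphisms commute with the coordinatewise maps $\theta^r,\theta^m$. Now take any $y\in\ker_H M$ and set $z=Wy\in H^m$; then $(I_r\mid 0)z=(I_r\mid 0)Wy=V^{-1}My=0$, so the first $r$ coordinates of $z$ vanish. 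Using that $\theta\colon G\to H$ is surjective, I would pick preimages in $G$ of the last $m-r$ coordinates of $z$ and assemble an element $w\in G^m$ whose first $r$ coordinates are $0_G$ and with $\theta^m(w)=z$; in particular $(I_r\mid 0)w=0$. Finally put $x:=W^{-1}w$; then $Mx=V(I_r\mid 0)WW^{-1}w=V(I_r\mid 0)w=0$, so $x\in\ker_G M$, and $\theta^m(x)=W^{-1}\theta^m(w)=W^{-1}z=y$, which proves surjectivity.

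I do not expect any real obstacle here. The only points requiring a moment's care are the passage from $d_r(M)=1$ to the clean Smith form $V(I_r\mid 0)W$ (which is purely a remark about invariant factors), and the routine bookkeeping that unimodular integer matrices act as automorphisms on $G^m$ and $H^m$ commuting with $\theta^m$; everything else is a direct computation.
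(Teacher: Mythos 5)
Your proof is correct. It takes a mildly different route from the paper's: both arguments rest on the Smith normal form $M=V(I_r\mid 0)W$ with $V,W$ unimodular (which, as you note, is exactly what $d_r(M)=1$ delivers, since the invariant factors multiply to $d_r(M)$), but you use it to conjugate the whole problem into the trivial case, in effect parametrizing $\ker_G M$ as $W^{-1}\big(\{0_G\}^r\times G^{m-r}\big)$ and lifting the $m-r$ free coordinates through $\theta$ one at a time. The paper instead uses the Smith form only to record that $M:A^m\to A^r$ is surjective for \emph{every} abelian group $A$; it then lifts $x_H\in\ker_H M$ to an arbitrary $x\in G^m$ using surjectivity of $\theta^m$, observes that $M(x)\in(\ker\theta)^r$, and corrects $x$ by an element $z\in(\ker\theta)^m$ with $M(z)=M(x)$ (surjectivity of $M$ applied to the abelian group $\ker\theta$), so that $x-z\in\ker_G M$ still maps to $x_H$. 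Your version is more explicit and constructive; the paper's lift-and-correct argument isolates the single property actually needed (surjectivity of $M$ over arbitrary abelian groups) and would carry over to any setting where that surjectivity is known without an explicit coordinate change. The easy containment $\theta^m(\ker_G M)\subset\ker_H M$ is handled identically in both. No gaps.
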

\begin{proof}
The homomorphism $M:G^m\to G^r$ is surjective for any abelian group $G$; this is immediate from the Smith normal form $M= U\, (I_r| 0^{r\times (m-r)})\, V$, where $U\in \Z^{r\times r}, V\in \Z^{m\times m}$ are unimodular matrices. On $\ker_G M$, the homomorphism $\theta^m$ takes values in $\ker_H M$. Now given $x_H \in \ker_H M$, there exists  $x\in G^m$ such that $\theta^m(x)= x_H$. We have $0=M(x_H)=M(\theta^m(x))=\theta^r(M(x))$, so $M(x)\in (\ker \theta)^r\leq G^r$. By surjectivity of $M$, there exists $z\in (\ker \theta)^m$ such that $M(z)=M(x)$. Thus $x-z$ is an element of $\ker_G M$ satisfying $\theta^m(x-z)= x_H$, so $\theta^m$ is indeed onto $\ker_H M$.
\end{proof}

Finally, we shall also use the fact that the integral  of bounded functions across a kernel $\ker_G M$ can be controlled in terms of their $L^1(G)$ norms, in the following sense.

\begin{lemma}\label{lem:basic2}
Let $M\in \Z^{r\times m}$, and let $G$ be a compact abelian group. For $j\in [m]$ let $p_j: G^m\to G$ denote the projection homomorphism to the $j$-th component, and let $G_j$ denote the closed subgroup $p_j(\ker_G M)\leq G$. Suppose that each $G_j$ has finite index $\kappa_j=|G:G_j |$ in $G$. Then, for any measurable functions  $f_1,\ldots,f_m:G\to \C$ with $\|f_j\|_{L^\infty(G)}\leq 1$ for all $j$, we have
\begin{equation}\label{eq:basic2}
\Big| \int_{\ker_G M}\; f_1(x_1)\cdots f_m(x_m) \ud \mu_{\ker_G M} (x) \Big| \leq \min_{j\in [m]}\;\kappa_j \;  \norm{f_j 1_{G_j}}_{L_1(G)}.
\end{equation}
\end{lemma}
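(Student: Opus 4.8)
The plan is to reduce the integral over $\ker_G M$ to an integral over a single coordinate copy of $G_j$, exploiting the fact that $\ker_G M$ projects onto each $G_j$. Fix an index $j\in[m]$ achieving the minimum. The projection $p_j:\ker_G M\to G_j$ is a continuous surjective homomorphism between compact abelian groups, hence measure-preserving in the sense that $\mu_{G_j}=\mu_{\ker_G M}\circ p_j^{-1}$. Therefore, bounding $|f_i(x_i)|\le 1$ for all $i\ne j$, we get
\[
\Big| \int_{\ker_G M} f_1(x_1)\cdots f_m(x_m)\,\ud\mu_{\ker_G M}(x)\Big|
\le \int_{\ker_G M} |f_j(x_j)|\,\ud\mu_{\ker_G M}(x)
= \int_{G_j} |f_j(y)|\,\ud\mu_{G_j}(y).
\]
The remaining task is to compare $\|f_j\|_{L^1(G_j)}$ (integral against the Haar probability $\mu_{G_j}$ of $G_j$) with $\kappa_j\|f_j 1_{G_j}\|_{L^1(G)}$ (integral against $\mu_G$).

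Since $G_j$ is a closed subgroup of finite index $\kappa_j$ in $G$, the Haar probability measure on $G_j$ is exactly $\kappa_j$ times the restriction of $\mu_G$ to $G_j$; this is the relation $\mu_{G_j}=\kappa_j\cdot\mu_G|_{G_j}$ already noted in the discussion after Theorem \ref{thm:cag-rem-lem+}. Concretely, $G$ is partitioned into $\kappa_j$ cosets of $G_j$, each of which has $\mu_G$-measure $1/\kappa_j$ by translation invariance, so $\mu_G(G_j)=1/\kappa_j$, and the uniqueness of normalized Haar measure on $G_j$ forces $\mu_{G_j}(B)=\kappa_j\,\mu_G(B)$ for every Borel $B\subset G_j$. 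Hence for the measurable function $f_j$ on $G$, $\int_{G_j}|f_j|\,\ud\mu_{G_j}=\kappa_j\int_{G_j}|f_j|\,\ud\mu_G=\kappa_j\int_G |f_j|\,1_{G_j}\,\ud\mu_G=\kappa_j\|f_j 1_{G_j}\|_{L^1(G)}$. Combining with the previous display yields \eqref{eq:basic2} for this particular $j$, and since $j$ was chosen to minimize the right-hand side, the bound follows.

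I do not anticipate a genuine obstacle here; the only point requiring a little care is making precise the claim that $\mu_{G_j}=\kappa_j\,\mu_G|_{G_j}$, which rests on the invariance of normalized Haar measure on $G$ under translations and the uniqueness of normalized Haar measure on the compact group $G_j$. One should also note at the outset that if some $G_j$ has infinite index then the statement is vacuous for that $j$ (the right-hand side is interpreted as $+\infty$), so the hypothesis that all $\kappa_j$ are finite is exactly what makes the minimum meaningful; in the application only a single finite-index $j$ is actually needed for the bound to be useful.
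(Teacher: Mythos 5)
Your proof is correct and follows essentially the same route as the paper's: bound all factors except $f_j$ by $1$, push the integral forward along the surjective homomorphism $p_j:\ker_G M\to G_j$ to get $\int_{G_j}|f_j|\,\ud\mu_{G_j}$, and then convert this to $\kappa_j\|f_j 1_{G_j}\|_{L^1(G)}$. The only cosmetic difference is that the paper invokes the quotient integral formula of Deitmar--Echterhoff for the last step, whereas you derive the relation $\mu_{G_j}=\kappa_j\,\mu_G|_{G_j}$ directly from the coset decomposition and uniqueness of Haar measure, which amounts to the same computation.
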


\begin{proof}
Fix $j\in [m]$. By the triangle inequality and the bounds $\|f_j\|_{L^\infty(G)}\leq 1$, the left side of \eqref{eq:basic2} is at most
\[
 \int_{\ker_G M}\; |f_j(x_j)| \ud \mu_{\ker_G M} (x) = \int_{\ker_G M}\; |f_j\circ p_j(x)| \ud \mu_{\ker_G M} (x)
 =  \int_{G_j}\; |f_j(y)| \ud \mu_{G_j} (y).
\]
On the other hand, by the quotient integral formula \cite[Theorem 1.5.2]{D&E}, we have
\begin{eqnarray*}
\int_G | f_j(x) 1_{G_j}(x) | \ud\mu_G(x) & = &\kappa_j^{-1} \sum_{z\in G/G_j} \int_{G_j} |f_j (y+z)| 1_{G_j}(y+z) \ud\mu_{G_j}(y)\\
& = &\kappa_j^{-1} \int_{G_j}\; |f_j(y)| \ud \mu_{G_j} (y).
\end{eqnarray*}
The result follows.
\end{proof}

\begin{proof}[Proof of Proposition \ref{prop:3.1_implies_1.3}]
Given $M\in \Z^{r\times m}$ with $d_r(M)=1$, let $\delta'>0$ be such that Theorem \ref{thm:cag-rem-lem+} holds for any compact abelian Lie group with initial parameter $\epsilon/2$, and let $\delta=\min(\delta'/2,\epsilon/2)$. Suppose that $A_1,\ldots, A_m$ are Borel subsets of a compact abelian group $G$ satisfying
$\mu_{\ker_G M}(A_1\times \cdots \times A_m \cap \ker_G M)\leq \delta$. Note that we may assume without loss of generality that each closed subgroup $G_j=p_j(\ker_G M)$ has positive measure in $G$, for otherwise the conclusion of Theorem \ref{thm:cag-rem-lem} holds just by removing the null-set $G_j\cap A_j$. It follows that each $G_j$ has finite index $\kappa_j$ in $G$. Indeed, by \cite[Corollary 20.17]{H&R}, there is a non-empty open set $U$ contained in the difference set $G_j-G_j = G_j$, and by compactness $G$ can be covered by finitely many translates of $U$ and therefore of $G_j$.\\
\indent It follows from Lemma \ref{lem:Cagapprox} (or rather a repeated application of its proof) that there exists a compact abelian Lie group $G_0$, a continuous surjective homomorphism $q:G\to G_0$, and a Borel set  $A_{j,0}\subset G_0$ for each $j\in [m]$, such that $\mu_G\big(A_j\, \Delta\, (q^{-1} A_{j,0})\big)\leq \delta/\kappa_j m$ for every $j\in [m]$. We claim that we  therefore have
\[
\mu_{\ker_{G_0} M}(A_{1,0}\times \cdots \times A_{m,0}  \cap \ker_{G_0} M)\leq \delta'.
\]
Indeed, firstly by Lemma \ref{lem:basic1} the homomorphism $q^m: \ker_G M\to \ker_{G_0} M$ is surjective, whence
\[
\mu_{\ker_{G_0} M}\big(A_{1,0}\times \cdots \times A_{m,0} \cap \ker_{G_0} M\big)= \mu_{\ker_G M}\big(q^{-1}A_{1,0}\times \cdots \times q^{-1}A_{m,0} \cap \ker_G M\big).
\]
Now, the map $(f_1,\ldots, f_m)\mapsto \int_{\ker_G M} f_1(x_1)\cdots f_m(x_m) \ud\mu_{\ker_G M}(x)$ is multilinear (for measurable functions $f_i$), and reduces to $\mu_{\ker_G M}(A_1\times\cdots\times A_m\cap \ker_G M)$ when $f_i=1_{A_i}$. A multilinearity argument (similar to that in the proof of Lemma \ref{lem:measrem}) gives us
\[
 \mu_{\ker_G M}\big(\prod_{j\in [m]}q^{-1}A_{j,0}\, \cap\, \ker_G M\big)= \sum_{j\in [m+1]}  \int_{\ker_G M} f_{1,j}(x_1)\cdots f_{m,j}(x_m) \ud\mu_{\ker_G M}(x),
\]
where for each $j\in [m+1]$ we have $f_{i,j} = 1_{A_i}$ if $i<j$, we have $f_{j,j} = 1_{q^{-1}A_{j,0}}-1_{A_j}$, and $f_{i,j} = 1_{q^{-1}A_{i,0}}$ if $i>j$. By Lemma \ref{lem:basic2}, it follows that $\mu_{\ker_G M}(q^{-1}A_{1,0}\times \cdots \times q^{-1}A_{m,0} \cap \ker_G M)$ is at most 
\[
 \mu_{\ker_G M}(A_1\times \cdots \times A_m \cap \ker_G M)+ \sum_{j\in [m]} \,\kappa_j\, \mu_G\big(A_j\,\Delta\, (q^{-1} A_{j,0})\big)  \leq \delta',
 \]
as we claimed.\\
\indent We now apply Theorem \ref{thm:cag-rem-lem+} on $G_0$, obtaining sets $R_{j,0}$ of measure at most $\epsilon/2$ such that $\prod_{j\in [m]} A_{j,0}\setminus R_{j,0}$ is $M$-free. Then $\prod_{j\in [m]} q^{-1} (A_{j,0})\setminus q^{-1}(R_{j,0})$ is $M$-free, whence, setting $R_j=q^{-1} (R_{j,0}) \cup (A_j \setminus q^{-1} (A_{j,0}))$, we are done.
\end{proof}

\begin{remark}\label{rem:TZ}
One can obtain a version of Lemma \ref{lem:Cagapprox} in which the approximating group $G_0$ is just second countable, arguing along the following lines. By Plancherel's theorem, the Fourier transform of $1_A$ is square-summable, hence supported on a countable subset of the Pontryagin dual $\widehat G$, hence supported on a countable subgroup of $\widehat G$. Taking duals then yields an approximation of $A$, up to a null set, given by a subset $A_0$ of a second countable quotient of $G$. 
\end{remark}

\end{appendix}

\noindent \textbf{Acknowledgements.} The authors are very grateful to the following institutions for their support. The first named author was supported by the \'Ecole normale sup\'erieure, Paris, and the Fondation Sciences Math\'ematiques de Paris, and his work was part of project ANR-12-BS01-0011 CAESAR. The second named author is supported by the ERC Consolidator Grant No. 617747. The third named author is supported by a University of Toronto Graduate Student Fellowship. The authors are also grateful to Bernard Host for indicating the proof of Lemma \ref{lem:Cagapprox}, to Terence Tao and Tamar Ziegler for Remark \ref{rem:TZ}, and to anonymous referees for useful detailed comments.

\end{document}